\definecolor{darkred}{rgb}{0.5,0,0}
\definecolor{darkgreen}{rgb}{0,0.5,0}
\definecolor{darkblue}{rgb}{0,0,0.5}
\newtheorem{theorem}{Theorem}[section]
\newtheorem{corollary}[theorem]{Corollary}
\newtheorem{proposition}[theorem]{Proposition}
\newtheorem{lemma}[theorem]{Lemma}
\newtheorem{lem}[theorem]{}
\theoremstyle{definition}
\newtheorem{definition}[theorem]{Definition}
\theoremstyle{remark}
\newtheorem{remark}[theorem]{Remark}
\newtheorem{example}[theorem]{Example}
\newcommand{\blem}{\begin{lem} \rm}
\newcommand{\elem}{\end{lem}}
\newcommand\M{\mathcal{M}}
\renewcommand\M{\mathcal{M}}
\renewcommand\S{\mathcal{S}}
\newcommand{\T}{\mathcal{T}}
\newcommand{\J}{\mathcal{J}}
\newcommand{\U}{\mathcal{U}}
\newcommand{\N}{\mathbb{N}}
\newcommand{\R}{\mathbb{R}}
\renewcommand{\H}{\mathbb{H}}
\newcommand{\C}{\mathbb{C}}
\newcommand{\cC}{\mathcal{C}}
\newcommand{\Z}{\mathbb{Z}}
\newcommand{\Q}{\mathbb{Q}}
\newcommand{\ddt}{\frac{d}{dt}}
\newcommand{\dds}{\frac{d}{ds}}
\newcommand{\on}{\operatorname}
\newcommand{\ainfty}{{$A_\infty$\ }}
\newcommand{\Def}{\on{Def}}
\newcommand{\dist}{\on{dist}}
\newcommand{\dual}{\vee}
\newcommand{\Fix}{\on{Fix}}
\renewcommand{\top}{{\on{top}}}
\newcommand{\Edge}{\on{Edge}}
\newcommand{\graph}{\on{graph}}
\newcommand{\Lag}{\on{Lag}}
\newcommand{\Ver}{\on{Vert}}
\newcommand{\Ve}{\on{Vert}}
\newcommand{\End}{\on{End}}
\newcommand{\Aut}{ \on{Aut} } 
\newcommand{\aut}{ \on{aut} }
\renewcommand{\ker}{ \on{ker}}
\newcommand\dirac{/\kern-1.2ex\partial} 
\newcommand\qu{/\kern-.7ex/} 
\newcommand\lqu{\backslash \kern-.7ex \backslash} 
\newcommand\dr{r_+ \kern-.7ex - \kern-.7ex r_-}
\def\cprime{$'$}
\newcommand{\labell}\label
\renewcommand{\d}{{\on{d}}}
\newcommand{\ol}{\overline}
\newcommand{\olp}{\ol{\partial}}
\newcommand\eps{\epsilon}
\newcommand{\ti}{\tilde}
\newcommand\cE{\mathcal{E}}
\newcommand\cG{\mathcal{G}}
\newcommand\cT{\mathcal{T}}
\newcommand\cF{\mathcal{F}}
\newcommand\cI{\mathcal{I}}
\newcommand\cJ{\mathcal{J}}
\newcommand\mE{\mathcal{E}}
\newcommand\curv{\on{curv}}
\newcommand\Map{\on{Map}}
\newcommand\Vect{\on{Vect}}
\newcommand\ul{\underline}
\newcommand\E{\mathcal{E}}
\newcommand\B{\mathcal{B}}
\newcommand\grad{\on{grad}}
\newcommand\reg{{\on{reg}}}
\newcommand\thin{{\on{thin}}}
\newcommand\bra[1]{ < \kern-.7ex {#1} \kern-.7ex >} 
\newcommand\bdefn{\begin{definition}}
\newcommand\edefn{\end{definition}}
\newcommand\bea{\begin{eqnarray*}}
\newcommand\eea{\end{eqnarray*}}
\newcommand\bcv{\left[ \begin{array}{r} }
\newcommand\ecv{\end{array} \right] }
\newcommand\bma{\left[ \begin{array}{l} }
\newcommand\ema{\end{array} \right]}
\newcommand\ben{\begin{enumerate}}
\newcommand\een{\end{enumerate}}
\newcommand\beq{\begin{equation}}
\newcommand\eeq{\end{equation}}
\newcommand\bex{\begin{example}}
\newcommand\bsj{\left\{ \begin{array}{rrr} }
\newcommand\esj{\end{array} \right\}}
\newcommand\univ{{\on{univ}}}
\newcommand\eex{\end{example}}
\newcommand\crit{{\on{crit}}}
\newcommand\sx{*\kern-.5ex_X}
\newcommand{\Tor}{\on{Tor}}
\newcommand{\thick}{{\on{thick}}}
\def\mathunderaccent#1{\let\theaccent#1\mathpalette\putaccentunder}
\def\putaccentunder#1#2{\oalign{$#1#2$\crcr\hidewidth \vbox
to.2ex{\hbox{$#1\theaccent{}$}\vss}\hidewidth}}
\begin{document}

\title{Floer trajectories and stabilizing divisors}

\author{Fran\c{c}ois Charest and Chris Woodward}

\address{Department of Mathematics, Barnard College - Columbia University,
MC 4433, 2990 Broadway, 
New York, NY 10027}
\email {charest@math.columbia.edu}

\address{Mathematics-Hill Center,
Rutgers University, 110 Frelinghuysen Road, Piscataway, NJ 08854-8019,
U.S.A.}  \email{ctw@math.rutgers.edu}

\thanks{
This work was partially supported
  by NSF grant DMS 1207194 and FRQNT grant B3 176181.  }

\begin{abstract} 
  We incorporate pearly Floer trajectories into the transversality
  scheme for pseudoholomorphic maps introduced by Cieliebak-Mohnke
  \cite{cm:trans}.  By choosing generic domain-dependent almost
  complex structures we obtain zero and one-dimensional moduli spaces
  with the structure of cell complexes with rational fundamental
  classes.  Integrating over these moduli spaces gives a definition of
  Floer cohomology over Novikov rings via stabilizing divisors for
  rational Lagrangians that are fixed point sets of anti-symplectic
  involutions satisfying certain Maslov index conditions as well as
  Hamiltonian Floer cohomology of compact rational symplectic
  manifolds.  A subsequent paper \cite{fuk} extends the techniques to
  regularization of moduli spaces needed for the definition of Fukaya
  algebras for more general Lagrangians.
\end{abstract} 


\maketitle

\parskip 0in \tableofcontents
\parskip .1in
\itemsep .1in

\section{Introduction} 

The Floer cohomology associated to a generic time-dependent
Hamiltonian on a compact symplectic manifold is a version of Morse
cohomology for the symplectic action functional on the space of paths
between two Lagrangians \cite{floer:lag}, \cite{floer:symp}.  The
cochains in this theory are formal combinations of Hamiltonian
trajectories connecting the Lagrangians while the coboundary operator
counts Hamiltonian-perturbed pseudoholomorphic strips.  When
well-defined, Floer cohomology is independent of the choice of
Hamiltonian and can be used to estimate the number of intersection
points as in the conjecture of Arnol\cprime d.

In order to construct the theory one must compactify the moduli spaces
of Floer trajectories by allowing pseudoholomorphic disks and spheres,
and unfortunately because of the multiple cover problem these moduli
spaces can not be made regular in general using a fixed almost
complex structure.  In algebraic geometry there is now an elegant
approach to this counting issue carried out in a sequence of papers by
Behrend-Manin \cite{bm:gw}, Behrend-Fantechi \cite{bf:in}, Kresch
\cite{kresch:can}, and Behrend \cite{be:gw}, based on suggestions of
Li-Tian \cite{litian:vir}.

In symplectic geometry several methods for solving these
transversality issues have been described.  An approach using
Kuranishi structures is developed in Fukaya-Ono \cite{fo:arnold} and a
related approach in Liu-Tian \cite{tian:floer}.  Technical details
have been further explained in Fukaya-Oh-Ohta-Ono \cite{fooo:tech} and
McDuff-Wehrheim \cite{mcduff:kur}.  A polyfold approach has been
developed by Hofer-Wysocki-Zehnder \cite{ho:sc} and is being applied
to Floer cohomology by Albers-Fish-Wehrheim \cite{afw}.  See also
Pardon \cite{pardon:alg} for an algebraic approach to constructing
virtual fundamental chains.  These approaches have in common that the
desired perturbed moduli space should exist {\em for abstract
  reasons}.

On the other hand, it is very useful to have perturbations {\em with
  geometric meaning} whenever possible.  For example, in the case of
toric varieties, Fukaya et al \cite{fooo:toric1} have pointed out that
the {\em general structure} of the invariants is not enough to see
that the Floer cohomology of Lagrangian torus orbits is well-defined,
and one has to choose a perturbation system adapted to the geometric
situation \cite{fooo:toric1}.  An approach to perturbing moduli spaces
of pseudoholomorphic curves based on {\em domain-dependent almost
  complex structures} was introduced by Cieliebak-Mohnke
\cite{cm:trans} for symplectic manifolds with rational symplectic
classes and further developed in Ionel-Parker \cite{ionel:vfc} and
Gerstenberger \cite{gersten:trans}.  It was extended by Wendl
\cite{wendl:hyp} in genus zero to allow insertions of Deligne-Mumford
classes.  Domain-dependent almost complex structures can be made
suitably generic only if one does not require invariance under
automorphisms of the domain.  Thus in order to obtain a perturbed
moduli space, one must kill the automorphisms.  This can be
accomplished by choosing a {\em stabilizing divisor}: a codimension
two almost complex submanifold meeting any pseudoholomorphic curve in
a sufficient number of points.  {\em Approximately} almost complex
submanifolds of codimension two exist by a result of Donaldson
\cite{don:symp}; the original almost complex structure can be
perturbed so that the Donaldson submanifolds are {\em exactly} almost
complex \cite{cm:trans}.  If the symplectic manifold admits a
compatible complex structure which makes it a smooth projective
variety, then Donaldson's results are not necessary.  Indeed, in this
case the existence of suitable divisors follows from results of
Bertini and Clemens \cite{cl:curves}.

We work under the following assumptions.  Let $X$ be a compact
connected symplectic manifold with symplectic form $\omega$ with
$[\omega] \in H^2(X,\Q)$ rational.  A {\em Lagrangian brane} is a
compact Lagrangian equipped with grading and relative spin structure,
see Section \ref{floertraj} below.  Results of Borthwick-Paul-Uribe
\cite[Theorem 3.12]{bo:le} in the K\"ahler case, and
Auroux-Gayet-Mohsen \cite{auroux:complement} more generally, imply the
existence of a stabilizing divisor in the complement of each
Lagrangian such that each holomorphic disk meets the divisor at least
once; the additional markings produced by the intersections allow us
to achieve transversality using domain-dependent almost complex
structures.  To define Floer cohomology we will also need further
conditions to rule out disk bubbling, which for lack of a better term
we call {\em admissibility}; this definition is local to this paper.

\begin{definition} \label{admissible} A Lagrangian $L \subset X$ is {\em admissible} 
if either
\begin{enumerate} 
\item the Lagrangian 
$$L = \on{\graph}(\psi) \subset X = Y^- \times Y$$
  is the graph of Hamiltonian diffeomorphism $\psi: Y \to Y$; here
  $Y^-$ denotes $Y$ with symplectic form reversed; or
\item the Lagrangian 
$$L = \{ x \in X \ | \ \iota(x) = x \} \subset X$$
is the fixed point set of an anti-symplectic involution
$$\iota:X \to X, \quad \iota^* \omega = - \omega,$$ 
has minimal Maslov number divisible by $4$, and is equipped with an
$\iota$-relative spin structure in the sense of \cite{fooo:anti}.
\end{enumerate} 
\end{definition} 
\noindent 
 Floer cohomology of more general Lagrangians is constructed,
following Fukaya-Oh-Ohta-Ono \cite{fooo} who used Kuranishi
structures, via stabilizing divisors in a follow-up paper \cite{fuk}
as a complex of bundles over the space of weak solutions to the
Maurer-Cartan equation associated to the Fukaya algebra.

The main results of this paper are the following existence and
invariance result for Floer cohomology groups. For $L_0,L_1 \subset X$
admissible Lagrangians, let $CF(L_0,L_1)$ denote the group of Floer
cochains with rational Novikov coefficients.  Denote by
$$\partial: CF(L_0,L_1) \to CF(L_0,L_1)$$
the coboundary operator defined by a rationally-weighted count of
Floer trajectories in the zero-dimensional component of the moduli
space.

\begin{theorem} \label{introthm} Let $L_0,L_1$ be admissible
  Lagrangian branes.  There exists a comeager subset of a set of
  domain-dependent almost complex structures such that the moduli
  space of perturbed Floer trajectories of expected dimension at most
  one $\ol{\M}_{\leq 1}(L_0,L_1)$ has the structure of a cell complex
  with fundamental class in relative homology.  The boundary of the
  locus of expected dimension one is the union of broken Floer
  trajectories consisting of pairs of Floer trajectories of expected
  dimension zero:
$$  \partial [\ol{\M}_{1}(L_0,L_1)] = [{\M}_{0}(L_0,L_1)
\times_{\cI(L_0,L_1)} {\M}_{0}(L_0,L_1)] .$$
The Floer coboundary operator $\partial$ is well-defined and satisfies
$\partial^2 = 0$.  The resulting Floer cohomology
$$HF(L_0,L_1) = \frac{\on{ker}(\partial)}{\on{im}(\partial)} $$ 
is independent of all choices and invariant under Hamiltonian
perturbation of either Lagrangian.  In the case $L_0 = L_1 = \Delta$
is equal to the diagonal Lagrangian $\Delta$ in a product
$X = Y^- \times Y$, the Floer cohomology $HF(\Delta,\Delta)$
isomorphic to the singular cohomology of $Y$ with coefficients in the
Novikov field.
\end{theorem} 

Versions of this theorem, which includes a weak version of the
Arnol\cprime d conjecture, appear in \cite{fooo:anti} and in Liu-Tian
\cite{tian:floer}, and another version announced as \cite{afw}.
However, abstract results such as the weak Arnol\cprime d conjecture
are not the motivation for the approach presented here; rather we have
various applications in mind in which the geometric meaning of the
trajectories plays a role.  Note that the proof of the weak
Arnol\cprime d conjecture given here does not require the use of
orbifolds (whose notion of morphism is very involved) nor virtual
fundamental classes of any type, nor the transversality results for
Floer trajectories in Floer-Hofer-Salamon \cite{fhs:tr}.

We thank Mohammed Abouzaid, Eleny Ionel, Tom Parker, Nick Sheridan,
Sushmita Venugopalan, and Chris Wendl for helpful discussions.

\section{Floer trajectories} 
\label{bm} 

This section constructs a compactification of the moduli space of
holomorphic strips with interior markings which allows the formation
of {\em bubbles} on the boundary. The domains of these maps are nodal
disks with two distinguished points.  There are morphisms between
moduli spaces of these disks of different combinatorial type, first
studied by Knudsen \cite{kn:proj2} and Behrend-Manin \cite{bm:gw}.
These morphisms will play a key role in the construction of the
perturbations.

\subsection{Stable strips} 

We recall the definition of stable marked curves introduced by
Mumford, Grothendieck and Knudsen \cite{kn:proj2}. By a {\em nodal
  curve} $C$ we mean a compact complex curve with only nodal
singularities denoted $w_1,\ldots, w_l \in C$ for some integer $l \ge
0$.  A {\em non-singular point} of $C$ is a non-nodal point.  For an
integer $n \ge 0$, an {\em $n$-marking} of a nodal curve $C$ is a
collection $ \ul{z} \in C^n$ of distinct, non-singular points. A {\em
  special point} is a marking or node.  An {\em isomorphism of marked
  curves} $(C_0,\ul{z}_0)$ to $(C_1,\ul{z}_1)$ is an isomorphism $C_0
\to C_1$ mapping $\ul{z}_0$ to $\ul{z}_1$.  Let $\Aut(C) =
\Aut(C,\ul{z})$ be the group of automorphisms of $(C,\ul{z})$.  A
genus zero marked curve $(C,\ul{z})$ is {\em stable} if the group
$\Aut(C)$ is trivial, or equivalently, any irreducible component of
$C$ has at least three special points.  The {\em combinatorial type}
of a nodal marked curve $C$ is the graph $\Gamma$ whose vertices
$\Ver(\Gamma)$ correspond to components of $C$ and edges
$\Edge(\Gamma)$ consisting of {\em finite edges}
$\Edge_{<\infty}(\Gamma)$ corresponding to nodes $w(e) \in C, e \in
\Edge_{< \infty}(\Gamma)$ and {\em semi-infinite edges}
$\Edge_\infty(\Gamma)$ corresponding to markings; the set of
semi-infinite edges with a labelling $\Edge_\infty(\Gamma) \to \{ 1,
\ldots, n \}$ so that the corresponding markings are $z_1,\ldots, z_n
\in C$.  A connected curve $C$ has genus zero if and only if the graph
$\Gamma$ is a tree and each irreducible component of $C$ has genus
zero, that is, is isomorphic to the projective line.  Knudsen
\cite{kn:proj2} shows that the moduli space of stable genus zero
$n$-marked connected curves has the structure of a smooth projective
variety, and in particular, a smooth compact manifold.  Let
$\ol{\cC}_{n}$ denote the moduli space of isomorphism classes of
connected stable $n$-marked genus zero curves, with topology induced
by Knudsen's theorem \cite{kn:proj2}.  For a combinatorial type
$\Gamma$ let $\cC_{\Gamma}$ the space of isomorphism classes of stable
marked curves of type $\Gamma$.  If $\Gamma$ is connected with $n$
semi-infinite edges we denote by $\ol{\cC}_\Gamma$ the closure of
$\cC_\Gamma$ in $\ol{\cC}_n$.  We also allow disconnected curves
$\Gamma$ whose components are genus zero, in which case we order the
markings on each component and the combinatorial type $\Gamma$ is a
{\em forest} (disjoint union of trees.)  If $\Gamma = \sqcup_i
\Gamma_i$ is disconnected then $\cC_\Gamma = \prod_i \cC_{\Gamma_i} $
is the product of the moduli spaces $\cC_{\Gamma_i}$ for the component
trees.  Over $ \ol{\cC}_{n}$ there is a {\em universal stable marked
  curve} whose fiber over an isomorphism class of stable marked curve
$[C,\ul{z}]$ is isomorphic to $C$.

The smooth structures on the moduli spaces can be described by
deformation theory.  Recall that a {\em deformation} of a nodal marked
curve $(C,\ul{z})$ is a germ of a family $C_B \to B$ of nodal curves
over a pointed scheme, say, $(B,b)$ together with sections $\ul{z}_B:
B \to C_B^n$ corresponding to the markings and an isomorphism of the
given marked curve $(C,\ul{z})$ with the fiber over $b \in B$.  We
omit the markings to simplify the notation.  Any stable curve has a
{\em universal deformation $C_B \to B$}, unique up to isomorphism,
with the property that any other deformation is obtained via pullback
by a unique map to $B$.  The {\em space of infinitesimal deformations}
$\Def(C)$ is the tangent space of $B$ at $b$.  Similarly the {\em
  tangential deformation space} of a nodal marked curve $C$ of type
$\Gamma$ is the space $\Def_\Gamma(C)$ of infinitesimal deformations
of the complex structure fixing the combinatorial type.  The
deformation spaces sit in an exact sequence
$$ 0 \to \Def_\Gamma(C) \to \Def(C) \to \cG_\Gamma(C) \to 0 $$
where
\begin{equation} \label{gluing0} \cG_\Gamma(C) = \bigoplus_{e \in
    \Edge_{< \infty}(\Gamma)} T_{w(e)^+} ^\dual C \otimes
  T_{w(e)^-}^\dual C \end{equation}
is the {\em normal deformation space}, see for example
Arbarello-Cornalba-Griffiths \cite[Chapter 11]{ar:gac2}, and
$T_{w(e)^\pm} C$ are the tangent spaces on either side of the
corresponding nodes $w(e) \in C$.  Elements of the normal deformation
space are known as {\em gluing parameters} in symplectic geometry.
Given a universal deformation of stable curves of fixed type $\Gamma$,
the {\em gluing construction} produces a universal deformation by
removing small disks around each node in a local coordinate $z$ and
gluing the components together using maps $z \mapsto \delta_e/z$ where
$\delta_e$ is the parameter corresponding to the $e$-th node.  In
genus zero there are several canonical schemes for choosing such local
coordinates, for example, by using three special points on a component
to fix an isomorphism with the projective line.

The moduli space of {\em stable marked disks} is a smooth manifold
with corners as described in Fukaya-Oh-Ohta-Ono \cite{fooo}.  For
integers $l,n \ge 0$, an $(l,n)$-marked disk is a stable $l+2n$-marked
stable sphere $\hat{C}$ equipped with an anti-holomorphic involution
$\iota_{\hat{C}}: \hat{C} \to \hat{C}$ such that the first $l$
markings are those on the fixed locus of the involution $\hat{C}^*$,
and the quotient $C = \hat{C}/\iota_{\hat{C}}$ of the curve by the
involution is a union of {\em disk components} (arising from
components preserved by the involution and with non-empty fixed locus)
and {\em sphere components} (arising from components interchanged by
the involution).  The markings fixed by the involution are {\em
  boundary markings} of $C$ while the remaining conjugated pairs of
markings are {\em interior markings} of $C$.  The deformation theory
of stable disks is similar to that of stable spheres, except that the
gluing parameters associated to the boundary markings will be assumed
to be real non-negative:
\begin{multline} \label{gluing}  \cG_\Gamma(C) := 
\bigoplus_{e \in \Edge_d(\Gamma)} (T_{w(e)^+}^\dual (\partial C)
\otimes T_{w(e)^-}^\dual (\partial C) )_{\ge 0} \\ \oplus \bigoplus_{e
  \in \Edge_s(\Gamma)} T_{w(e)^+} ^\dual C \otimes T_{w(e)^-}^\dual C
\end{multline}
where $\Edge_d(\Gamma)$ (resp. $\Edge_s(\Gamma)$) is the set of edges
corresponding to {\em boundary nodes} connecting disk components
(resp. {\em interior nodes} connecting sphere components to sphere or
disk components), and
$(T_{w(e)^+}^\dual (\partial C) \otimes T_{w(e)^-}^\dual (\partial C)
)_{\ge 0} \subset T_{w(e)^+}^\dual (\partial C) \otimes
T_{w(e)^-}^\dual (\partial C) \cong \R $
is the non-negative part; the last isomorphism is induced from the
natural orientations on the components of the boundary $\partial C$.

We introduce notation for moduli spaces of marked strips as follows.
A {\em marked strip} is a marked disk with two boundary markings.  Let
$C$ be a connected $(2,n)$-marked nodal disk with markings $\ul{z}$.
We write $\ul{z} = (z_-, z_+, z_1,\ldots, z_n)$ where $z_\pm$ are the
boundary markings and $z_1,\ldots,z_n$ are the interior markings.  We
call $z_-$ (resp. $z_+$) the {\em incoming} (resp. {\em outgoing})
marking. Let $C_1,\ldots, C_m$ denote the ordered {\em strip
  components} of $C$ connecting $z_-$ to $z_+$; the remaining
components are either {\em disk components}, if they have boundary, or
{\em sphere components}, otherwise.
Let $\ti{w}_i := C_i \cap C_{i+1}$ denote the {\em intermediate node}
connecting $C_i$ to $C_{i+1}$ for $i = 1,\ldots, m-1$.  Let
$\ti{w}_0 = z_-$ and $\ti{w}_m = z_+$ denote the incoming and outgoing
markings.  Let $C^\times := C - \{ \ti{w}_0,\ldots, \ti{w}_m \}$
denote the curve obtained by removing the nodes connecting strip
components and the incoming and outgoing markings.  Each strip
component may be equipped with coordinates
$$ \phi_i : C_i^\times := C_i - \{ \ti{w}_i,\ti{w}_{i-1} \} \to \R \times [0,1]
, \quad i =1,\ldots, m $$
satisfying the conditions that if $j$ is the standard complex
structure on $\R \times [0,1]$ and $j_i$ is the complex structure on
$C_i$ then 
$$\phi_i^* j = j_i, \quad  \lim_{z \to \ti{w}_i} \pi_1
\circ \phi_i(z) = -\infty, \quad \displaystyle \lim_{z \to \ti{w}_{i+1}}
\pi_1 \circ \phi_i(z) = \infty$$ 
where $\pi_i$ denotes the projection on the $i^{th}$ factor.  We
denote by 
\begin{equation} \label{timecoord}
f: C^\times \to [0,1], \quad z \mapsto \pi_2 \circ
\phi_i(z) \end{equation} 
the continuous map induced by the {\em time} coordinate on the
strip components. The time coordinate is extended to nodal marked
strips by requiring constancy on every connected component of
$C^\times - \bigcup_i C^\times_i$.  The boundary of any marked strip
$C$ is partitioned as follows.  For $b \in \{ 0, 1 \}$ denote
\begin{equation} \label{Cb}
(\partial C)_b := f^{-1}(b) \cap \partial C \end{equation}
so that $\partial C^\times = (\partial C)_0 \cup (\partial C)_1$.
That is, $(\partial C)_b$ is the part of the boundary from $z_-$ to
$z_+$, for $b = 0$, and from $z_+$ to $z_-$ for $b = 1$.  An example
of a stable strip is shown in Figure \ref{stabstrip}. 

\begin{figure}
\includegraphics[width=3in]{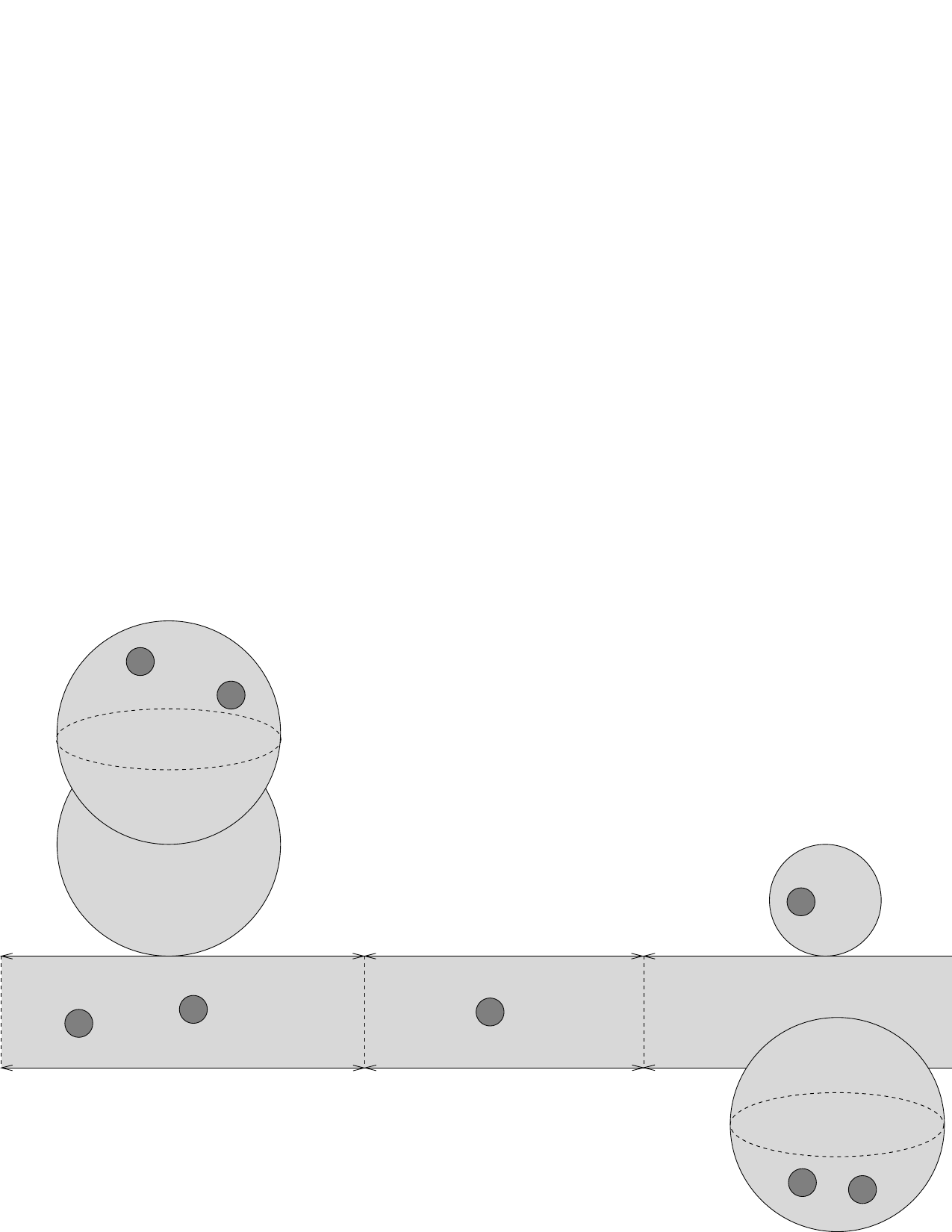}
\caption{A stable marked strip} 
\label{stabstrip} 
\end{figure} 

A variation on the definition of moduli spaces of stable disks or
strips involves {\em metric trees}.  A {\em treed strip} of type
$\Gamma$ is a marked strip $(C,\ul{z})$ with a {\em metric}
$$\ell:\Edge_{<\infty,d}(\Gamma) \to [0,\infty]$$ 
assigning lengths to the finite edges of the subgraph of $\Gamma$
corresponding to disk components.  The {\em combinatorial type} is
defined as before, except that the subset of edges with infinite, zero
or $]0,\infty[$ lengths is recorded as part of the data. Combinatorial
types of strips naturally define combinatorial types of treed strips
by adding zero metrics on their edges corresponding to boundary nodes.
An {\em isomorphism} of treed marked strips is an isomorphism of
marked strips having the same metric.  A {\em stable} treed strip is
one that has a stable underlying strip.  Constructions of this type
appear in, for example, Oh \cite{oh:fl1}, Cornea-Lalonde
\cite{cl:clusters}, Biran-Cornea \cite{bc:ql}, and Seidel
\cite{seidel:genustwo} and stable treed strips can be seen as special
cases of the domain spaces used in \cite{charest:clust}.  There is a
natural notion of convergence of treed stable strips, in which
degeneration to a nodal disk assigns length zero to the node that
appears.  Let $\ol{\M}_n$ denote the moduli space of isomorphism
classes of connected stable treed strips with $n$ interior markings in
addition to the incoming and outgoing markings. For $\Gamma$ a
connected type we denote by ${\M}_{\Gamma} \subset \ol{\M}_n$ the
moduli space of stable strips of combinatorial type $\Gamma$ and
$\ol{\M}_\Gamma$ its closure.  Each $\ol{\M}_\Gamma$ is naturally a
manifold with corners, with local charts obtained by a standard gluing
construction.  Generally $\ol{\M}_n$ is the union of several
top-dimensional strata.  In Figure \ref{onetwo} the locus in
$\ol{\M}_{1}$ where the first marking has time coordinate $1/2$ is
depicted (the stratum where the two interior markings come together to
form a sphere bubble is not shown.)
\begin{figure}[h!]
\includegraphics[width=3in]{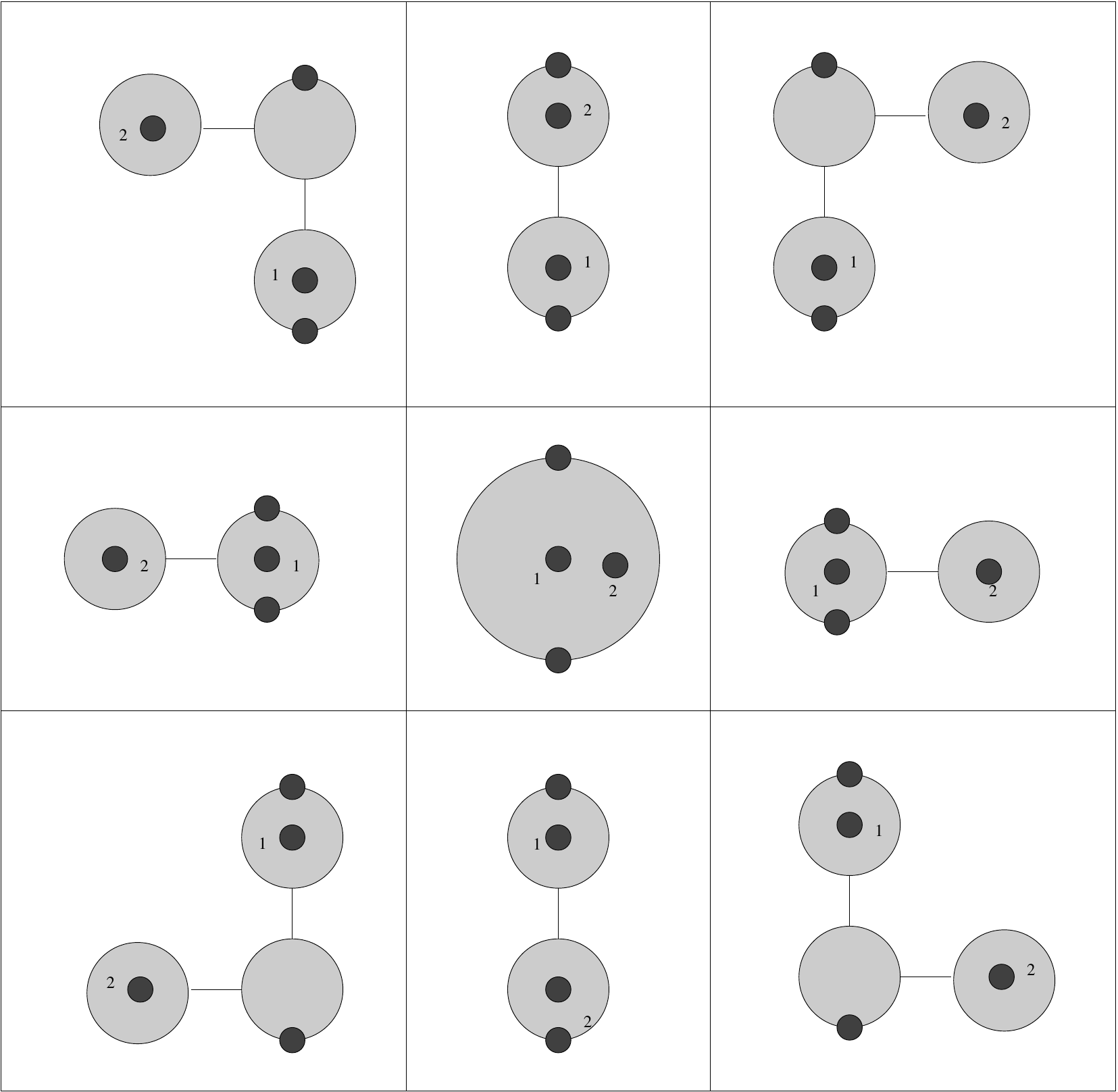}
\caption{Stable treed disks}
\label{onetwo} 
\end{figure}
\noindent For $\Gamma$ disconnected, $\ol{\M}_\Gamma$ is the product $\prod_i
\ol{\M}_{\Gamma_i}$ of moduli spaces for the connected components
$\Gamma_i$ of $\Gamma$.

Orientations on the strata in the moduli space of treed disks may be
constructed as follows (see Charest \cite{charest:clust} for a more
general procedure).  Each stratum is a product of moduli spaces for
the disk components and the intervals corresponding to the length
parameters.  For the case of a single disk, $m+2$ markings on the
boundary and $n$ points in the interior, the moduli space may be
identified with the subset of distinct tuples in
\begin{equation} \label{orientmod} \{ (w_1,\ldots,w_m,z_1,\ldots,z_n) \in \R \backslash (\R \times \{
0,1 \})^m \times ( \R \times (0,1))^n \} .\end{equation}
As a result, it inherits an orientation from the orientation on
$\R \times [0,1]$ and the orientation on $\R$.  In particular, note
that the configuration with $m = 1,n = 0$ and the points $w_1$ lying
in $\R \times \{ 0 \}$ resp. $\R \times \{ 1 \}$ has orientation given
by $-1$ resp. $+1$ times the orientation of a point.  If the disk has
a single special point on the boundary then its interior may be
identified with the positive half-space $\H \subset \C$.  The moduli
space $\M_\Gamma$ of disks with a single marking on the boundary and
$n$ points in the interior is the subset of distinct tuples in
$$ \M_\Gamma \cong 
\Aut(\H) \backslash \{ (z_1,\ldots,z_n) \in \H^n \}
 $$
where $\Aut(\H)$ is the Lie group of automorphisms of the upper
half-plane $\H$.  Now $\Aut(\H)$ is generated by translations and
dilations, and this identifies $\aut(\H)$ with $\R^2$ and so gives
$\Aut(\H)$ an orientation.  The moduli space is oriented by the
orientation on $\H^n$ and $\Aut(\H)$.  In general, any stratum
$\M_\Gamma$ of $\ol{\M}_n$ is oriented by taking the product of the
orientations for the disk components and strip components and the
product of the intervals corresponding to lengths of the edges.  The
resulting orientation depends on the ordering of components and edges.
However, the constructions below involve only the case of one disk
component with only one boundary special point (in which case the
moduli space has even dimension and the ordering is irrelevant) and at
most two strip components (in which case the one containing the
incoming marking is ordered first, and at most one interval
corresponding to an edge with non-zero length).  In particular, if
$\Gamma$ is a type with a single disk bubble attached to
$\R \times \{ 0 \}$ resp.  $\R \times \{1 \}$ by an edge of zero
length and $\Gamma'$ is the type obtained by collapsing an edge then
the map $\M_\Gamma \to \ol{\M}_{\Gamma'}$ is orientation preserving
resp.  reversing.

Our perturbations will be maps defined on certain {\em universal
  curves} over the moduli spaces above.  The moduli space of treed
strips of each fixed type admits a universal strip
$$\ol{\U}_\Gamma \to \ol{\M}_\Gamma $$ 
whose fiber over an element $[C,\ul{z},\ell] \in \ol{\M}_\Gamma$ is
the underlying stable marked disk $S= (C,\ul{z})$ without the metric.
The universal strip is closely related to the moduli space
$\ol{\M}_{\Gamma'}$ of strips of the type $\Gamma'$ obtained from
$\Gamma$ by adding an additional interior marking; the two spaces are
homeomorphic away from the boundary nodes (where the latter has real
blowups). In particular, $\ol{\U}_\Gamma$ is a manifold with corners
away from the boundary nodes.  Later we will use local trivializations
$\ol{\U}_\Gamma$ of the universal strip on each stratum $\M_\Gamma$
giving rise to families of complex structures.  For a combinatorial
type $\Gamma$ and a treed disk $C$ of type $\Gamma$ let
\begin{equation} \label{coll} {{\U}}_{\Gamma}^{i} \to {{\M}}_{\Gamma}^i 
\times S , \quad i = 1,\ldots, l \end{equation}
be a collection of local trivializations of the universal strip
identifying each nearby fiber with $S$ in a way that the markings are
constant.  Let $\J(S)$ denote the space of complex structures on the
smooth curve underlying $C$, or equivalently, the space of complex
structures on its {\em normalization} of $C$ obtained by blowing up
each node (so each node gets replaced with a pair of points).  The
complex structures on the fibers induce a map
\begin{equation} \label{localtriv} {\M}_{\Gamma}^i \to \J(S), \  m \mapsto j(m).
 \end{equation}
Since the universal curve is locally holomorphically trivial in a
neighborhood of the nodes and markings (see for example \cite[Chapter
  11]{ar:gac2} for the case of curves without boundary) we may assume
that $j(m)$ is independent of $m$ on neighborhoods of the nodes and
markings of $S$.

The combinatorial structure of the moduli spaces of stable treed
strips involves identifications between moduli spaces of different
combinatorial types introduced in Knudsen \cite{kn:proj2} and
Behrend-Manin \cite{bm:gw} in the case of stable curves.
Families of perturbations will later
be chosen to be coherent with respect to those identifications.  In
Behrend-Manin \cite{bm:gw}, these morphisms were associated to
morphisms of graphs called {\em extended isogenies}.  Here we call
them Behrend-Manin morphisms.

\begin{definition} \label{bmgraphs} {\rm (Behrend-Manin morphisms of
    graphs)} A {\em morphism} of graphs $\Upsilon: \Gamma \to \Gamma'$
  is a surjective map of the set of vertices
  $\Ve(\Gamma) \to \Ve(\Gamma')$ obtained by combining the following
  {\em elementary morphisms}:
\begin{enumerate} 
\item {\rm (Cutting edges)} $\Upsilon$ {\em cuts an edge}
  $e \in \Edge_{< \infty,d}(\Gamma)$ with infinite length or an edge
  $e \in \Edge_{< \infty,s}(\Gamma)$ if $\Upsilon$ is a bijection but the edge sets are related by
$$\Edge(\Gamma') \cong \Edge(\Gamma) - \{ e \} + \{ e_+,e_- \} $$
where $e_\pm$ are semi-infinite edges attached to the vertices
contained in $e$.  Since our curves have genus zero, $\Gamma'$ is
disconnected with pieces $\Gamma_-,\Gamma_+$.  If the edge corresponds
to a node connecting disk components then $\Gamma_-,\Gamma_+$ are
types of stable disks, while if the edge corresponds to a node
connecting a disk or sphere to a sphere component then one type, say
$\Gamma_-$ is the type of a stable disk while $\Gamma_+$ is the type
of a stable sphere. The ordering on $\Edge_{ \infty,s}(\Gamma)$
induces one on $\Edge_{ \infty,s}(\Gamma_\pm)$ by using on $e_\pm$ the
lowest value label on $\Edge_{\infty,s}(\Gamma_\mp)$.

\item {\rm (Collapsing edges)} $\Upsilon$ {\em collapses an edge} if
  the map on vertices $\Upsilon: \Ve(\Gamma) \to \Ve(\Gamma')$ is a
  bijection except for a single vertex $v' \in \Ve(\Gamma')$ which has
  two pre-images connected by an edge in $\Edge(\Gamma)$ of length
  zero, and
$$\Edge(\Gamma') \cong \Edge(\Gamma) - \{ e \} .$$
\item {\rm (Making an edge finite or non-zero)} $\Upsilon$ {\em makes
  an edge finite or non-zero} if $\Gamma$ is the same graph as
  $\Gamma'$ and the lengths of the edges $\ell(e), e \in
  \Edge(\Gamma')$ are the same except for a single edge for which
  $\ell(e) = \infty $ resp. $0$ and the length $\ell'(e)$ in $\Gamma'$ is
  in $(0,\infty)$.
\item {\rm (Forgetting tails)} $\Upsilon: \Gamma \to \Gamma'$ {\em
  forgets a tail} (semi-infinite edge) and collapses edges that become
  unstable. The ordering on $\Edge_{ \infty,s}(\Gamma)$ then naturally
  defines one on $\Edge_{ \infty,s}(\Gamma')$.
\end{enumerate} 

\end{definition} 

Each of the above operations on graphs corresponds to a map of
moduli spaces of stable marked disks.

\begin{definition} {\rm (Behrend-Manin maps of moduli spaces)} 
\begin{enumerate} 
\item {\rm (Cutting edges)} Suppose that $\Gamma'$ is obtained from
  $\Gamma$ by cutting an edge.  A diffeomorphism
  $\ol{\M}_{\Gamma'} \to \ol{\M}_\Gamma$ obtained by identifying the
  two markings corresponding to the cut edge and choosing the ordering
  of the markings to correspond to the non-identified markings of the
  original curve.
\item {\rm (Collapsing edges)} Suppose that $\Gamma'$ is obtained from
  $\Gamma$ by collapsing an edge. There is an embedding
  $\ol{\M}_{\Gamma} \to \ol{\M}_{\Gamma'}$ with normal bundle having
  fiber at $[C]$ isomorphic to the space
  $\cG_{\Gamma'}(C)/\cG_\Gamma(C)$, see \eqref{gluing}. The image of
  such an embedding is a 1-codimensional corner of
  $\ol{\M}_{\Gamma'}$.  
\item {\rm (Making an edge finite resp. non-zero )} If $\Gamma'$ is
  obtained from $\Gamma$ by making an edge finite resp. non-zero then
  $\ol{\M}_{\Gamma}$ also embeds in $\ol{\M}_{\Gamma'}$ as the
  1-codimensional corner where $e$ reaches infinite resp. zero length,
  with trivial normal bundle.
\item {\rm (Forgetting tails)} Suppose that $\Gamma'$ is obtained from
  $\Gamma$ by forgetting the $i$-th tail.  Forgetting the $i$-th
  marking and collapsing the unstable components and sum distances for
  the glued edges defines a map $\ol{\M}_\Gamma \to
  \ol{\M}_{\Gamma'}$.
\end{enumerate} 
\end{definition} 

Most of these maps were already considered by Knudsen \cite{kn:proj2}
and might also be called Knudsen maps.  Each of the maps involved in
the operations {\rm (Collapsing edges), (Making edges finite or
  non-zero), (Forgetting tails), (Cutting edges)} extends to a smooth
map of universal treed strips.  
In the
case that $\Gamma$ is disconnected, say the disjoint union of
$\Gamma_1$ and $\Gamma_2$, then
$\ol{\M}_\Gamma \cong \ol{\M}_{\Gamma_1} \times \ol{\M}_{\Gamma_2} $.
In this case the universal treed strip $\ol{\U}_{\Gamma}$ is the
disjoint union of the pullbacks of the universal treed strip
$\ol{\U}_{\Gamma_1}$ and $\ol{\U}_{\Gamma_2}$.

We recall the definition of the morphisms in (Forgetting a tail).  Let
$(C,z_1,\ldots,z_n,\ell)$ be a stable treed marked strip with $n$
interior markings $z_1,\ldots, z_n$.  Given an integer
$i \in \{ 1,\ldots, n \}$, we obtain an $n-1$-marked treed strip
$(C,z_1,\ldots, \hat{z_i}, z_n,\ell)$ by forgetting the $i$-th
marking.  The unstable components can be collapsed as follows:
\begin{enumerate} 
\item If a sphere component $C_k \subset C$ becomes unstable after
  forgetting the $i$-th marking, having only two remaining special
  points $w', w'' \in C_k$, collapse the component, identifying the
  remaining special points $w' \sim w''$;
  
\item If a disk or strip component $C_k \subset C$ becomes unstable
  after forgetting the $i$-th marking, starting from the component
  containing the $i$-th marking, iteratively collapse the unstable
  disk components $C_{i_1},\ldots, C_{i_l}$, identifying the boundary
  special points if there are more than one or forgetting it whenever
  there is only one. At every step, if the collapsed disk $C_{i_l}$
  has only one node $w \in C_{i_l}$ with a metric, that metric should
  be forgotten (the node itself is forgotten), while if it has two
  nodes $w',w''$ with a metric, those metrics should be summed and the
  two node identified to obtain a node $w \sim w' \sim w''$ with
  length $\ell(w) = \ell(w') + \ell(w'')$.
\end{enumerate} 

The universal strip is equipped with the following maps.  On the
universal strip, the time coordinates \eqref{timecoord} on the strip
components extend to a map
\begin{equation} \label{allbutone} f: \ol{\U}_\Gamma \to [0,1
  ] \end{equation}
On the subset with time coordinate equal to zero or one, we have
additional maps measuring the distance to the strip components
given by summing the lengths of the connecting edges:
$$ \ell_b: f^{-1}(b) \to [0,\infty], 
\quad z \mapsto \sum_{e \in \Edge(z)} \ell(e), \quad b \in \{ 0, 1
\} $$
where $\Edge(z)$ is the set of edges corresponding to nodes between
$z$ and the strip components.  Thus any point on the universal
strip $z \in \ol{\U}_\Gamma$ which lies on a disk component has
$f(z) \in \{ 0, 1 \}$ and so $\ell_b(z)$ is well-defined for either
$b = 0 $ or $b = 1$.

\subsection{Floer trajectories} 
\label{floertraj}

We now turn to Floer theory. First we describe the space of Floer
cochains.  Recall that $(X,\omega)$ is a compact symplectic manifold.
Let Lagrangians $L_0,L_1 \subset X$ be admissible Lagrangian branes.
Let 
$H \in C^\infty([0,1] \times X)$ be a time-dependent Hamiltonian and 
let 
$$\widehat{H}\in \Map([0,1], \Vect(X)), \quad \iota(\hat{H}) \omega = \d
H $$
denote its Hamiltonian vector field with time $t$ flow
$\phi_t: X \to X$. Denote by
$$ \cI(L_0,L_1) := \left\{ x: [0,1] \to X \ | \ \ddt x = \widehat{H}
\circ x, \quad x(b) \in L_b, b \in \{ 0 ,1 \} \right\} \cong \phi_1(L_0) \cap
L_1 $$ 
the set of perturbed intersection points, assumed to be transverse.

We are particularly interested in the special case of the diagonal.
That is, suppose that $Y$ is a compact symplectic manifold, $Y^-$ is
the symplectic manifold obtained by reversing the symplectic form and
$X = Y^- \times Y$.  Let 
$$ \Delta = \{ (y,y) \ | \ y \in Y \} \subset X $$
denote the diagonal Lagrangian.  In the case that $L_0=L_1 = \Delta$
there is a bijection between perturbed intersection points and the
space of one-periodic orbits,
$$ \cI(L_0,L_1) = \left\{ x: S^1 \to X \ | \ \ddt x = \widehat{H}
\circ x \right\} \cong \{ x \in X \ | \ \phi_1(x) = x \} .$$

A degree map on the set of intersection points is induced from
gradings on the Lagrangians.  Suppose that $X$ is equipped with an
$N$-fold Maslov cover $\Lag^N(X) \to \Lag(X)$ for some positive
integer $N$; recall from Seidel \cite[Lemma 2.6]{se:gr} that
$(X,\omega)$ admits an $N$-fold Maslov covering if and only if $N$ divides twice
the minimal Chern number.  A {\em grading} of a Lagrangian $L$ is a
lift of the canonical section $L \to \Lag(X)$ to $\Lag^N(X)$.  Given
gradings on $L_0,L_1$, the intersection set $\cI(L_0,L_1)$ is equipped
with a {\em degree map}
$$ \cI(L_0,L_1) \to \Z_N, \quad x \mapsto |x| .$$
Denote by $\cI(L_0,L_1)_k = \{ x \in \cI(L_0,L_1) \ |\  | x| = k \}$ 
the subset of degree $k$ so that
$$ \cI(L_0,L_1) = \bigcup_{k \in \Z_N} \cI(L_0,L_1)_k .$$
The Floer cochains form a cyclically-graded group generated by the
time-one periodic trajectories.  Let $\Lambda$ be the universal
Novikov field in a formal parameter $q$,
$$ \Lambda = \left\{ \sum_{\rho \in \R} a(\rho) q^\rho , \quad \forall
\eps, \on{supp}(a) \cap (-\infty,\eps) < \infty, \quad a(\R) \subset
\Q \right\} .$$
Products in $\Lambda$ are defined by
$q^{\rho_1} q^{\rho_2} = q^{\rho_1 + \rho_2}$ for
$\rho_1,\rho_2 \in \R$, and extended to linear combinations.  Let
$CF(L_0,L_1)$ denote the free $\Lambda$-module generated by $\cI(L_0,L_1)$,
$$ CF(L_0,L_1) = \bigoplus_{k \in \Z_N} CF^k(L_0,L_1), \quad 
CF^k(L_0,L_1) = \bigoplus_{x \in \cI(L_0,L_1)_k} \Lambda \bra{x} .$$

The coboundary operator in Floer cohomology is defined by counting
Floer trajectories. To describe Floer's equation we introduce
notations for almost complex structures and associated decompositions.
We have the following notation for almost complex structures on
surfaces.  Let $C$ be an oriented surface, possibly with non-empty
boundary $\partial C$.  Denote by $\J(C)$ the space of complex
structures $j: C \to C$ compatible with the orientation.  Let
$E \to C$ be a complex vector bundle and let $\Omega^1(C,E)$ denote
the space of one-forms with values in $E$, that is, sections of
$T^\dual C \otimes_\R E$.  Given $\alpha \in \Omega^1(C,E)$ and a
complex structure $j \in \J(C)$, denote by
$\alpha^{0,1} \in \Omega^{0,1}(C,E)$ its $0,1$-part with respect to
the decomposition
$$ T^\dual C \otimes_\R E = (T^\dual C 
  \otimes_\R E)^{1,0} \oplus (T^\dual C \otimes_\R E)^{0,1} .$$
  We introduce the following terminology for almost complex
  structures.  An almost complex structure $J : TX \to TX$ is
  $\omega$-{\em tamed} if $\omega( \cdot, J \cdot)$ is positive, and
  $\omega$-{\em compatible} if $\omega( \cdot, J \cdot)$ is a
  Riemannian metric.  Denote by $\J_{\tau}(X,\omega)$ the space of
  $\omega$-tamed almost complex structures and by $\J(X,\omega)$ the
  space of $\omega$-compatible almost complex structures.  In the
  following paragraphs, let $J \in \J_\tau(X,\omega)$.

  Next we introduce notations for Hamiltonian perturbations and
  associated perturbed Cauchy-Riemann operators.  Let
  $K \in \Omega^1(C,\partial C; C^\infty(X))$ be a one-form with
  values in smooth functions vanishing on the tangent space to the
  boundary, that is, a smooth map $TC \times X \to \R$ linear on each
  fiber of $TC$, equal to zero on $TC | \partial C$.  Denote by
  $\widehat{K} \in \Omega^1(C,\partial C; \Vect(X))$ the corresponding
  one form with values in Hamiltonian vector fields.  The {\em
    curvature} of the perturbation is
\begin{equation} \label{RK}
 R_K = \d K + \{ K, K \}/2 \in \Omega^2(C, C^\infty(X)) \end{equation} 
where $ \{ K, K \} \in \Omega^2(C, C^\infty(X))$ is the two-form
obtained by combining wedge product and Poisson bracket, see
McDuff-Salamon \cite[Lemma 8.1.6]{ms:jh} whose sign convention is
slightly different.  Given a map $u : C \to X$, define
\begin{equation} \label{cr} 
\olp_{J,K} u := (\d u + \widehat{K} )^{0,1} \in \Omega^{0,1}(C, u^*
TX).\end{equation}
The map $u$ is {\em (J,K)-holomorphic} if $\olp_{J,K} u = 0 $.
Suppose that $C$ is equipped with a compatible metric and $X$ is
equipped with a tamed almost complex structure and perturbation $K$.
The {\em $K$-energy} of a map $u: C \to X$ is
$$ E_K(u) := (1/2) \int_C | \d u + \widehat{K}(u) |_J^2 $$
where the integral is taken with respect to the measure determined by
the metric on $C$ and the integrand is defined as in \cite[Lemma
  2.2.1]{ms:jh}.  If $\olp_{J,K}u = 0$, then the $K$-energy differs
from the symplectic area $A(u) := \int_C u^* \omega$ by a term
involving the curvature from \eqref{RK}:
\begin{equation} \label{energyarea}
 E_K(u) = A(u) + \int_C R_K(u) .\end{equation}
In particular, if the curvature vanishes then the area is
non-negative.

The notion of Floer trajectory is obtained from the notion of
perturbed pseudoholomorphic map by specializing to the case of a strip
$$C = \R \times [0,1] = \{ (s,t) \ | \ s \in \R, t \in [0,1] \}.$$
Given $H \in C^\infty([0,1] \times X)$ let $K$ denote the perturbation
one-form $K = - H \d t$ and let $E_H := E_K$.  If $u: \R \times [0,1]
\to X$ has limits $x_\pm : [0,1] \to X$ as $s \to \pm \infty$ then the
energy-area relation \eqref{energyarea} becomes
$$ E_H(u) = A(u) - \int_{[0,1]} (x_+^* H - x_-^* H) \d t .$$
Let $\olp_{J,H} = \olp_{J,K}$ be the corresponding perturbed
Cauchy-Riemann operator from \eqref{cr}.  A map $u: C \to X$ is {\em
  $(J,H)$-holomorphic} if $\olp_{J,H} u =0$.  A {\em Floer trajectory}
for Lagrangians $L_0,L_1$ is a finite energy $(J,H)$-holomorphic map
$u: \R \times [0,1] \to X$ with $\R \times \{ b \} \subset L_b, b
=0,1$.  An {\em isomorphism} of Floer trajectories $u_0,u_1: \R \times
[0,1] \to X$ is a translation $\psi: \R \times [0,1] \to \R \times
[0,1]$ in the $\R$-direction such that $\psi^* u_1 = u_0$.  Denote by
$$\M(L_0,L_1) := \left\{ u: \R \times [0,1] \to X \ | \begin{array}{c}
  \olp_{J,H} u = 0 \\ u(\R \times \{b \}) \subset L_b, b \in \{ 0,1 \}
  \\ E_H(u) < \infty \end{array} \right\} / \R $$
the moduli space of isomorphism classes of Floer trajectories of
finite energy, with its quotient topology.

In order to apply the theory of Donaldson hypersurfaces we will need
our Hamiltonian perturbations to vanish, and in order to achieve
recall the correspondence between Floer trajectories with Hamiltonian
perturbation and trajectories without perturbation with different
boundary condition.  Let $H \in C^\infty([0,1] \times \R, X)$ be a
time-dependent Hamiltonian and let
$J \in \Map([0,1],\J_{\tau}(X,\omega))$ be a time-dependent almost
complex structure.  Suppose that $L_0,L_1$ are Lagrangians such that
$\phi_1(L_0) \cap L_1$ is transversal.  There is a bijection between
$(J_t,H_t)$-holomorphic Floer trajectories with boundary conditions
$L_0,L_1$ and $(\phi_{1-t}^{-1})^* J_t$-holomorphic Floer trajectories
with boundary conditions $\phi_1(L_0),L_1$ obtained by mapping each
$(L_0,L_1)$ trajectory $(s,t) \mapsto u(s,t)$ to the
$(\phi_1(L_0),L_1)$-trajectory given by
$(s,t) \mapsto \phi_{1-t}(u(s,t))$ \cite[Discussion after
(7)]{fhs:tr}.

A compactification of the space of Floer trajectories is obtained by
allowing sphere and disk bubbling.  A {\em nodal Floer trajectory}
consists of a nodal strip $C$ together with a map $u: C \to X$ 
satisfying the following conditions: 
\begin{enumerate}
\item 
$u( (\partial C)_b) \subset L_b$ for $b \in \{ 0,1 \}$ (see
\eqref{Cb}) 
\item $u$ is $(J_t,H_t)$-holomorphic in strip coordinates on each
  strip component $C_i - \{ w_i, w_{i+1} \}$ and
\item $u$ is $J_t$-holomorphic on each sphere and disk component on
  which $f$ is constant and equal to $t$; see \eqref{Cb} for the
  definition of $(\partial C)_b$.
\end{enumerate} 
An {\em isomorphism of marked nodal Floer trajectories} is an
isomorphism of marked nodal curves with disk structures intertwining
the markings and the maps to $X$. A marked nodal Floer trajectory $u:
C \to X$ is {\em stable} if it has finitely many automorphisms, that
is, $\# \Aut(u) < \infty$.  A component $C_v, v \in \Ver(\Gamma)$ of
$C$ is a {\em ghost component} if the restriction of $u$ to $C_v$ has
zero energy, that is, $E_H(u | C_v) = 0$.  A nodal marked Floer
trajectory $u: C \to X$ is stable if and only if any sphere ghost
component $C_v \subset C$ has at least three special points $z_i \in
C_v, w_k \in C_v$ and any disk ghost component has either (a) at least
three boundary special points or (b) one interior special point and
one boundary special point.  Let
$$\ol{\M}(L_0,L_1) = \left\{ u: C \to X \ | \ \begin{array}{c}
\olp_{J,H} u = 0 , \ E_H(u) < \infty \\ \ u( (\partial C)_b) \subset
L_b, b \in \{ 0,1 \} \end{array} \right\}/ \sim $$
denote the moduli space of isomorphism classes of stable Floer
trajectories of finite energy.  $\ol{\M}(L_0,L_1)$ may be equipped
with a topology similar to that of pseudoholomorphic maps, see for
example McDuff-Salamon \cite[Section 5.6]{ms:jh}: Gromov-Floer
convergence defines a topology in which convergence is Gromov-Floer
convergence, by proving the existence of suitable local distance
functions.  Given $x_\pm \in \cI(L_0,L_1)$, denote by
$$\ol{\M}(L_0,L_1,x_+,x_-) := \left\{ [u] \in \ol{\M}(L_0,L_1) \ |
\ \lim_{s \to \pm \infty} u(s,\cdot) = x_\pm \right\} $$
the moduli space with fixed limits along the strip-like ends near $z_\pm
\in \partial C$.   

In preparation for the transversality arguments later, we review the
Fredholm theory for the moduli space of Floer trajectories.  Let $C$
be a treed strip of type $\Gamma$.  We denote by $C^\times$ the curve
with strip-like ends obtained by removing the nodes connecting strip
components and incoming and outgoing markings.  For integers $p > 2$
and $k$ sufficiently large (for the moment, $k > 2/p$ suffices) we
denote by $\Map^{k,p}(C^\times,X,\ul{L})$ the space of continuous maps
of Sobolev class $W^{k,p}$ on each component
$C_v \subset C^\times$ and taking the given Lagrangian boundary
conditions:
$$ \Map^{k,p}(C^\times,X) := \left\{ u: C^\times \to X, \  \forall m, \ \Vert u
| C_v \Vert_{k,p} < \infty,\  u((\partial C_b) \subset L_b, b \in
\{ 0 , 1 \} \right\} .$$
Here the Sobolev $k,p$-norm $\Vert \cdot \Vert_{k,p}$ is defined using
a covariant derivative on $X$ and $C^\times$ of standard form on the
strip-like ends as in, for example, Schwarz \cite{sch:coh}.  Denote by
$ \Omega^0(C^\times, u^* (T X, TL) )_{k,p}$ the space of continuous
sections of Sobolev class $k,p$ on each component, taking values in
the pull-back $u^* TX$ of the cotangent bundle, and with boundary
values in $ (u |
 \partial C^\times_b)^* T L_b$ on $\partial C^\times_b \subset
 \partial C$
 on $f^{-1}(b) \cap \partial C$.  Given time-dependent metrics that so
 that $L_b$ are totally geodesic for time $b \in \{ 0,1 \}$, we have
 geodesic exponentiation maps
\begin{equation} \label{mapcharts}
\Omega^0(C^\times, u^*(T X, T\ul{L}) )_{k,p} \to \B_{k,p} :=
\Map^{k,p}(C^\times,X,\ul{L}), \quad \xi \mapsto
\exp_u(\xi) \end{equation}
which provide charts for $\B_{k,p}$.  With these charts $\B_{k,p}$
admits the structure of a smooth Banach manifold by standard Sobolev
estimates.  Define a fiber bundle $\E_{k,p}$ over $\B_{k,p}$ whose
fiber at $u$ is
\begin{equation} (\E_{k,p})_u := \Omega^{0,1}(C^\times,
u^* TX)_{k-1,p}, \end{equation} 
that is, a product of $(0,1)$-forms over the smooth components of
$C^\times$.  Local trivializations of $\E_{k,p}$ are defined by geodesic
exponentiation from $u$ and parallel transport using the Hermitian
connection defined by the almost complex structure 
$$ \Phi_\xi: \Omega^{0,1}(C^\times, u^* TX)_{k-1,p} \to 
\Omega^{0,1}(C^\times, \exp_u(\xi)^*TX)_{k-1,p} $$
see for example \cite[p. 48]{ms:jh}.  The Cauchy-Riemann operator
defines a section
\begin{equation} \label{here0} 
\olp_{J,H} : \ \B_{k,p} \to \E_{k,p}, \quad u \mapsto \olp_{J,H} u
.\end{equation}
Define a non-linear map on Banach spaces using the local
trivializations
$$ \cF_{u}: \Omega^0(C^\times, u^* (TX,T\ul{L}))_{k,p} \to
\Omega^{0,1}(C^\times, u^* (TX,T\ul{L}))_{k-1,p}, \quad \xi \mapsto
\Phi_\xi^{-1} \olp_{J,H} \exp_{u}(\xi) .$$
The quotient of the zero level set $\cF_u^{-1}(0)/\Aut(C^\times)$ is
naturally identified with a subset of $\M(L_0,L_1)$ and gives a local
description of the moduli space.

The local smoothness of the moduli space can be guaranteed by the
surjectivity of the linearized operator by the implicit function
theorem for maps between Banach spaces.  Let $D_u$ denote the
linearization of $\cF_u$ (see Floer-Hofer-Salamon \cite[Section
5]{fhs:tr}).  Standard arguments show that this operator is Fredholm,
see Lockhart-McOwen \cite{loc:ell} and Donaldson \cite[Section
3.4]{don:floer}.  A stable non-nodal Floer trajectory
$u: C^\times \to X$ is {\em regular} if $D_u$ is surjective.  If
$C = \R \times [0,1]$ and $u: C^\times \to X$ is a regular Floer
trajectory then $\olp_{J,H}^{-1}(0)$ is a smooth, finite dimensional
manifold near $u$.  The action of $\Aut(C)$ is free and proper, and
the quotient is Hausdorff by exponential decay at the ends, equivalent
to finiteness of the energy.  It follows that $\M(L_0,L_1)$ is a
smooth manifold near any regular $[u]$.  Floer-Hofer-Salamon
\cite[Proof of Theorem 5.1]{fhs:tr} show, at least in the case of
periodic Floer cohomology, that there exist time-dependent almost
complex structures such that every Floer trajectory is regular.  We
avoid the use of \cite{fhs:tr} by using stabilizing divisors to
construct coherent perturbations in the next section.

\section{Coherent perturbations} 

In this section we describe how to regularize the moduli space of
Floer trajectories using stabilizing divisors.  We assume the reader
is at least modestly familiar with the results of Cieliebak-Mohnke
\cite{cm:trans}, which show how to use stabilizing divisors to kill
the automorphisms of spheres appearing in the compactification of
moduli spaces of pseudoholomorphic maps.

\subsection{Stabilizing divisors} \label{stab}

As mentioned in the introduction, our goal is to kill automorphism
groups of disks appearing in the compactification by adding markings
corresponding to intersection points with a codimension two symplectic
submanifold produced by Donaldson's construction.  Because any such
disk has at least one special point on the boundary, it suffices to
choose divisors so that any non-constant holomorphic disk intersects
the divisor at least once.

We introduce the following terminology.  By a {\em divisor} we mean a
closed codimension two symplectic submanifold $D \subset X$.  An
almost complex structure $J: TX \to TX$ is {\em adapted} to a divisor
$D$ if $D$ is an almost complex submanifold of $(X,J)$.

\begin{definition} \label{defi_stab}
A divisor $D \subset X$ is {\em stabilizing} for a Lagrangian
submanifold $L \subset X$ if and only if
\begin{enumerate}
\item the divisor $D$ is disjoint from the Lagrangian $L$, that is, 
$L \cap D = \emptyset$; and 
\item any positive area disk $u: (C,\partial C) \rightarrow (X,L)$
  intersects the divisor $D$ in at least one point:
$\omega([u])>0 \implies u^{-1}(D) \neq \emptyset .$
\label{cond_b}
\end{enumerate} 
\end{definition} 

\begin{example} \label{exact} Suppose that the Lagrangian $L$ is {\em
    exact} in $(X-D,\omega)$, that is, for some one-form
  $\alpha \in \Omega^1(X - D)$ and function $ \phi \in \Omega^0(L)$ we
  have $\d \alpha = \omega$ and $\d \phi = \alpha |_L$.  In this case
  $D$ satisfies condition \eqref{cond_b}. Indeed, since
  $ \omega = d\alpha $ on $X - D$ and $\alpha | L = \d \phi$ for some
  $\phi: L \to \R$, the integral of $\omega$ over any disk
  $u: (C,\partial C) \rightarrow (X-D,L)$ vanishes:
$$ \int_C u^* \omega= \int_{\partial C} \alpha = 0 .$$
Thus in particular any disk with positive area must intersect the divisor. \end{example}
 
Stabilizing divisors for Lagrangians exist under suitable rationality
assumptions.  We say that $X$ is {\em rational} if
$[\omega] \in H^2(X,\Q)$.  Let $X$ be rational and denote by $m_0$ the
{\em integrality constant} given by the minimal positive integer such
that $m_0 [\omega] \in H^2(X,\Z)$.  We say that a divisor $D$ has {\em
  degree $k>0$} if $[D]$ is Poincar\'{e} dual to $m_0k[\omega]$.  For
later use we also define {\em torsion constant}
$$t_0 =
|\Tor(H_1(L))|=|\Tor(H^2(L))| .$$ 

\begin{lemma} 
  If $X$ is rational and $L \subset X$ is a Lagrangian submanifold
  then there exists a codimension two submanifold $D \subset X$ representing a
  positive multiple of $[\omega]$ in the complement of $L$.
\end{lemma}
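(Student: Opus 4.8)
The statement as printed is an abbreviated form of the existence of a \emph{stabilizing} divisor for $L$ in the sense of Definition \ref{defi_stab} --- in fact of one of every sufficiently large degree $k$ --- and that is what I would prove. The plan is to reduce condition \eqref{cond_b} to the exactness criterion of Example \ref{exact}, and then to invoke the Donaldson-type construction of symplectic hypersurfaces in the form relative to a Lagrangian due to Auroux--Gayet--Mohsen \cite{auroux:complement} (and to Borthwick--Paul--Uribe \cite[Theorem 3.12]{bo:le} in the K\"ahler case).

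First, since $X$ is rational, $m_0[\omega]$ is integral and $X$ is prequantizable: there is a Hermitian line bundle $\mathcal{L}\to X$ with a unitary connection of curvature $-2\pi i\,m_0\omega$, so that $c_1(\mathcal{L}^{\otimes k}) = k m_0[\omega]$. Since $L$ is Lagrangian, $\omega|_L = 0$, so $\mathcal{L}|_L$ carries a flat connection; this is the feature that makes the construction relative to $L$ possible. Donaldson's asymptotically holomorphic theory \cite{don:symp} produces, for all $k \gg 0$, sections $s_k$ of $\mathcal{L}^{\otimes k}$ that are approximately holomorphic and uniformly transverse to the zero section, so that $D_k := s_k^{-1}(0)$ is a smooth codimension-two symplectic submanifold Poincar\'e dual to $k m_0[\omega]$, i.e.\ a divisor of degree $k$. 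The refinement of \cite{auroux:complement} (resp.\ via Bergman kernel asymptotics in the K\"ahler case) additionally keeps $|s_k|$ bounded below by a $k$-independent constant along $L$ --- possible precisely because the asymptotically holomorphic estimates improve along the isotropic submanifold $L$ --- and controls $s_k$ near $L$ well enough that $D_k \subset X - L$ and that $L$ is \emph{exact in} $(X - D_k,\omega)$: a primitive $\alpha$ of $\omega$ on $X - D_k$ exists because $[\omega]|_{X-D_k}$ is a multiple of $[D_k]|_{X-D_k} = 0$, and $\alpha|_L$ can be taken exact using the flatness of $\mathcal{L}|_L$. Then condition (a) of Definition \ref{defi_stab} holds by construction, while condition \eqref{cond_b} follows from the computation in Example \ref{exact}: every disk $u:(C,\partial C)\to (X - D_k,L)$ bounds zero $\omega$-area by Stokes, so a disk with $\omega([u]) > 0$ cannot avoid $D_k$.

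For later use one also wants a tamed almost complex structure adapted to $D_k$; this follows from the Cieliebak--Mohnke lemma \cite{cm:trans}, which perturbs a given tamed $J$, in a neighbourhood of $D_k$ only, to a tamed $J'$ for which $D_k$ is almost complex --- and the perturbation, being supported away from $L$, preserves $D_k \cap L = \emptyset$. Taking $D := D_k$ for any sufficiently large $k$ gives the asserted stabilizing divisor. The step I expect to be the main obstacle is the relative Donaldson-type construction itself, and within it the genuinely delicate point is not the disjointness $D_k \cap L = \emptyset$ but the strengthening to exactness of $L$ in the complement --- equivalently, to the condition that every positive-area disk meets $D_k$: for this one needs quantitative control of $s_k$ throughout a neighbourhood of $L$, not merely a pointwise lower bound on $L$, in order to rule out holomorphic disks that hug $L$ and evade $D_k$. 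The remaining ingredients --- the prequantization, the reduction through Example \ref{exact}, and the Cieliebak--Mohnke adjustment --- are essentially formal once that construction is taken as given.
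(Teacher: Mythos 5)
You have misread the scope of the lemma. As stated, it asserts only the existence of \emph{some} codimension two submanifold in the complement of $L$ --- not a symplectic one, not a stabilizing one. The paper's proof is a short topological observation with no analytic content: since $\omega|_L = 0$, the line bundle $\ti{X}$ with $c_1(\ti{X}) = m_0[\omega]$ restricts to $L$ with torsion first Chern class, so $\ti{X}^{\otimes t_0}|_L$ (with $t_0 = |\Tor(H^2(L))|$) is topologically trivial; one simply extends a nonvanishing section on $L$ to a section $s$ on $X$ transverse to the zero section, and sets $D = s^{-1}(0)$. No Donaldson theory, no asymptotic holomorphy, no prequantum connection. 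Your proposal replaces a two-line topological argument with the full Auroux--Gayet--Mohsen construction, which is the content of the subsequent Lemmas \ref{rati_divi}, \ref{bs_divi}, and \ref{rati_divi2}, not of this one.

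Beyond the mismatch of scope, the stronger statement you are actually trying to prove is \emph{false} in the generality of the lemma's hypotheses. You claim that ``$\alpha|_L$ can be taken exact using the flatness of $\mathcal{L}|_L$,'' but flatness of the restricted connection only says that its connection one-form is closed on $L$; it does not make it exact. Exactness requires trivial holonomy, and for a generic Lagrangian the holonomy representation $\pi_1(L) \to U(1)$ is nontrivial. This is precisely the distinction the paper draws between a Lagrangian that is \emph{rational} or \emph{strongly rational} (where one can arrange exactness of $L$ in the complement, Lemmas \ref{rati_divi}, \ref{bs_divi}) and a general Lagrangian (where one only obtains a \emph{weakly} stabilizing divisor in the sense of Definition \ref{defi_weak}, via the quantitative bounds in Lemma \ref{rati_divi2}). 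So the exactness step, which you correctly flag as the delicate point, is not merely a gap to be filled --- it is the step at which the argument breaks for the class of Lagrangians to which this lemma applies, and the paper structures the whole of Section \ref{stab} around that obstruction.
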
 

\begin{proof} By the rationality assumption, there exists a complex
  line bundle $\ti{X} \to X$ whose first Chern class $c_1(\ti{X})$ is
  equal to $m_0 [\omega]$. Since $[\omega|_{L}]=0\in H^2(L,\R)$, the
  first Chern class of the restriction $c_1(\ti{X}|_L)$ is in the
  torsion submodule $\Tor(H^2(L))$ of $H^2(L)$.  Note that the tensor
  product $\ti{X}^{\otimes t_0}|_L$ is topologically trivial.  Choose
  a section $s: X \to \ti{X}^{\otimes t_0}$ that is non-vanishing on
  $L$.  A generic perturbation $s + \sigma$ of $s$ is transverse to
  the zero section (for instance, by Sard-Smale applied to sections of
  some differentiability class.)  The zero set
  $D = (s + \sigma)^{-1}(0)$ is then smooth by the implicit function
  theorem and disjoint from $L$.\end{proof}

The following is an example of a Lagrangian disjoint from a Donaldson
hypersurface but not exact in the complement.  Consider a circle in
the symplectic two-sphere, with Donaldson hypersurface given by a
point.  One of the disks bounding the circle meets the hypersurface,
but the other does not.  In this case one sees that in general there
is no relation between the area and the intersection number of a disk
with the hypersurface.  However, in the exact case discussed in
Example \ref{exact} we have the following relation between area of
disks with boundary in the Lagrangian and their intersection number
with codimension two submanifolds as in the previous paragraph.

\begin{lemma} 
  \label{rema_exac} Suppose that $u: (C,\partial C) \to (X,L)$ is a
  disk with boundary in $L$ and $D \subset X - L$ is an oriented
  codimension two submanifold given as the zero set of a section
  $s: X \to \ti{X}$ such that $L$ is exact in $X - D$.  Then the
  intersection number is proportional to the area:
$$ u_*([C])\cdot [D] = m_0 \int_C u^* \omega .$$
\end{lemma}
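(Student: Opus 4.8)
The plan is to compute the intersection number $u_*([C])\cdot[D]$ by realizing $[D]$ as the zero locus of a section of a line bundle and counting zeros with sign, then relate that count to a curvature integral. Concretely, recall from the proof of the previous lemma that $D = s^{-1}(0)$ for a section $s$ of $\ti X^{\otimes t_0} \to X$, where $c_1(\ti X) = m_0[\omega]$ and $s$ is nonvanishing on $L$ and transverse to the zero section; moreover $L$ being exact in $X-D$ means $\omega = d\alpha$ on $X-D$ with $\alpha|_L = d\phi$. (Strictly speaking, the statement of Lemma~\ref{rema_exac} allows $D$ to be any oriented codimension-two submanifold with $L$ exact in $X-D$, so I would first note that the PD class of $D$ is some multiple $r m_0[\omega]$ of the symplectic class for an integer $r \ge 1$; to get the clean constant $m_0$ in the conclusion one should either assume $D$ has degree one, i.e. $[D]$ is PD to $m_0[\omega]$, or interpret the claim up to that multiplicative factor. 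I will carry out the argument for $[D] = \mathrm{PD}(m_0[\omega])$, the case actually used.)

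First I would pull back the bundle and section: $u^*\ti X^{\otimes t_0} \to C$ is a complex line bundle over the disk $C$, hence topologically trivial, and $u^*s$ is a section whose zeros are exactly $u^{-1}(D)$, each transverse (after a small perturbation of $u$ rel boundary if necessary, which does not change either side of the claimed identity since $s|_L$ is nonvanishing so no zeros escape through $\partial C$). Since $D$ is an almost complex / symplectic submanifold and $u$ meets it transversally, each intersection point contributes $+1$, so $u_*([C])\cdot[D] = \#u^{-1}(D)$ counted positively. Next, choose a Hermitian metric and a unitary connection $\nabla$ on $\ti X$ whose curvature is $-2\pi i\, m_0\omega$ (possible since $c_1(\ti X) = m_0[\omega]$, using the standard Chern--Weil normalization); then $\ti X^{\otimes t_0}$ carries the induced connection with curvature $-2\pi i\, t_0 m_0\omega$. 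The section $u^*s$ together with this connection lets me compute the count of zeros as a boundary integral: on the trivialized bundle over $C$, writing $u^*s = \rho e^{i\theta}$ away from zeros, the number of zeros equals $\frac{1}{2\pi}\int_{\partial C} d(\arg s) - \frac{1}{2\pi i}\int_C F_{\nabla}$ by the Poincaré--Hopf / Gauss--Bonnet argument for sections — i.e. the winding of $s$ around $\partial C$ minus the curvature integral over the interior.

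The main step is then to show the boundary term vanishes. Here is where exactness of $L$ enters: over $\partial C$ the section $u^*s$ takes values in $u^*\ti X^{\otimes t_0}|_{\partial C}$, which is $(u|_{\partial L})^*$ of a bundle over $L$; since $\ti X^{\otimes t_0}|_L$ is topologically trivial (this was the point of introducing $t_0$ in the earlier lemma), and in fact one expects $s|_L$ to be a flat/parallel nonvanishing section with respect to the restricted connection — more precisely, exactness $\alpha|_L = d\phi$ forces the restricted connection one-form on $L$ (in the unitary frame given by $s/|s|$) to be exact, namely $-2\pi i t_0 m_0\, \phi$ up to a closed term that integrates to zero around the loop $u|_{\partial C}$. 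Consequently $\int_{\partial C} d(\arg s)$, the holonomy contribution, reduces to $\oint_{\partial C} u^*(\text{connection form}) = -2\pi i t_0 m_0 \oint_{\partial C} u^*\alpha = -2\pi i t_0 m_0 \oint_{\partial C} d(u^*\phi) = 0$. Plugging back in, $\#u^{-1}(D) = -\frac{1}{2\pi i}\int_C u^*F_{\nabla} = -\frac{1}{2\pi i}\int_C(-2\pi i\, t_0 m_0)\, u^*\omega = t_0 m_0\int_C u^*\omega$.

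I expect the genuine obstacle to be making the boundary-term argument clean and correct: one must pin down the precise relationship between the chosen connection on $\ti X^{\otimes t_0}$, the nonvanishing section $s|_L$, and the primitive $\alpha$ — in particular verifying that one may arrange (or that it automatically holds, up to terms that drop out around a contractible loop) that $\nabla^{0,1}$-closedness of $s$ off $L$ and exactness on $L$ combine to kill the winding. An alternative, possibly cleaner route that avoids connections: observe that $u_*([C])\cdot[D]$ depends only on the relative homology class $[u] \in H_2(X,L)$, and the pairing $H_2(X,L)\to\Z$, $[u]\mapsto [u]\cdot[D]$ is the restriction of the integral cohomology class $\mathrm{PD}(m_0[\omega]) = c_1(\ti X^{\otimes 1})$ — wait, more carefully, the linking/intersection with $D$ computes the pairing of $[u]$ with a relative class lifting $m_0[\omega]\in H^2(X)$ to $H^2(X,L)$, which exists precisely because $[\omega]|_L = 0$; evaluating that relative class on $[u]$ gives $m_0\int_C u^*\omega$ because one may represent the relative primitive by $\alpha$ on $X - D \supset u(\partial C)$ and apply Stokes, $[u]\cdot[D] = m_0\big(\int_C u^*\omega - \oint_{\partial C} u^*\alpha\big) = m_0\big(\int_C u^*\omega - \oint_{\partial C} d u^*\phi\big) = m_0\int_C u^*\omega$. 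This is essentially the same computation with the connection repackaged as a relative de Rham class; I would present whichever version reads most transparently, and the subtle point remaining in either version is the same — that the primitive $\alpha$ really does extend over all of $X - D$ and restricts to an exact form on $L$, which is exactly the exactness hypothesis.
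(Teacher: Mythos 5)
Your proposal is correct, and the ``alternative, possibly cleaner route'' you sketch at the end is essentially the paper's argument: fix a Hermitian connection on $\ti{X}$ with curvature $-2\pi i m_0\omega$, let $\alpha_s$ be its connection form in the trivialization given by a section $s$ cutting out $D$, decompose $C$ into small balls around the intersection points and their complement, apply Stokes, and observe that the residues around the small circles give the local intersection multiplicities while the $\partial C$ term $\int_{\partial C}u^*\alpha_s = \int_{\partial C}d(u^*\phi)$ vanishes by exactness. Your first (Poincar\'e--Hopf) formulation is the same computation repackaged; the only wrinkle worth flagging is that you assert each transverse intersection contributes $+1$, which holds only for $J$-holomorphic $u$ --- for a general smooth disk the signed count is what appears, which both your winding-minus-curvature formula and the paper's residue formula compute correctly, so the argument is unaffected. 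Your observation that the stated multiplicative constant $m_0$ implicitly presumes $[D]$ is Poincar\'e dual to $m_0[\omega]$ (degree one) rather than a higher multiple is a fair reading of an imprecision in the lemma's hypotheses; the paper's proof indeed works with a section of $\ti{X}$ itself.
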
 

\begin{proof} The proof is an integration-by-parts computation.  Fix
  on $\ti{X}$ a Hermitian metric and a unitary connection with
  curvature $-2\pi i m_0 \omega$.  Let $\ti{X}_1 \subset \ti{X}$
  denote the unit circle bundle, and suppose that
$$\alpha_s \in \Omega^1(\ti{X}_1 |X- D),  \quad \pi^* (-2 \pi i m_0 \omega) = \d
\alpha_s $$
is the connection one-form of $\nabla$ with respect to the
trivialization defined by the section $s$.  For any curve
$u: (C,\partial C) \to (X,L)$, the intersection multiplicity
$\mu(u,z)$ of any intersection point $x \in u^{-1}(D)$ is the residue
of $\alpha_s$,
$$ \mu(u,z) = \lim_{ \eps \to 0} \int_{\partial B_\eps(z)} u^* \alpha_s
.$$
The total intersection number is the sum of the local intersection
multiplicities
$$ u_*([C]) \cdot [D] = 
\sum_{z \in u^{-1}(D)} \mu(u,z) .$$
Since $L$ is exact in $X - D$, there exists a function
$\phi_s: L \to \R$ such that $\d \phi_s = \alpha_s$.  By Stokes'
theorem the area is related to the intersection multiplicity by
\begin{eqnarray} \label{area}
 \int_C u^* \omega &=& (1/m_0) \lim_{\eps \to 0} \sum_{ z \in u^{-1}(D)}
\int_{\partial B_\eps(z)} u^* \alpha_s + (1/m_0) \int_{\partial C} u^*
\alpha_s \\ &=& (1/m_0) \sum_{ z \in u^{-1}(D)} \mu(u,z) + \int_{\partial
  C} u^* \d \phi_s \\ & =& (1/m_0) u_*([C])\cdot [D]
 .\end{eqnarray}
\end{proof} 

There are various notions of what it means for a Lagrangian to be
rational.  For example, one could require simply that the relative
cohomology class $[\omega]$ lies in the rational relative cohomology
group $H^2(X,L;\Q)$.  This is equivalent, by relative Poincar\'e
duality, to requiring that relative two-cycles having rational area.
However, a slightly stronger condition is convenient for the
construction of stabilizing divisors:

\begin{definition} 
  Given a rational symplectic manifold $X$, an immersed Lagrangian
  submanifold $L \subset X$ is {\em rational} or {\em rationally
    Bohr-Sommerfeld} if there exists a line bundle $\ti{X}$ with a
  connection $\alpha \in \Omega^1(\ti{X}_1)$ (here $\ti{X}_1$ is as
  above the unit circle bundle) whose curvature satisfies
$$\exists k \in \Z, \quad \curv(\alpha)  = 
-2\pi i k m_0 \omega$$
and that restricts to a trivial line-bundle-with-connection on $L$.
That is, $L$ is rational if and only if
$\ti{X} |_L \cong L \times \C$
as line bundles with connection.
\end{definition} 

Rationality can be phrased in terms of holonomies as follows.  Given a
line-bundle-with-connection $\ti{X} \to X$, the restriction of
$\ti{X}$ to $L$ is automatically a {\em flat} line bundle by the
Lagrangian condition.  Flatness implies that the parallel transports
give rise to a {\em holonomy representation}
$$h:  \pi_1(L,l) \to U(1)$$ 
at any base point $l \in L$.  The Lagrangian $L$ is rational for 
$\ti{X}$ if and only if the restriction $\ti{X} |_L $ is trivial if 
and only if its holonomy representation $h$ is trivial for all base 
points $l \in L$.  It suffices to check the condition for one base 
point in each connected component of $L$.  Moreover, using 
trivializations of $\tilde{X}$ over disks representing $\pi_2(X,L,l)$,
one gets a refined holonomy representation 
$h_2(\pi_2(X,L,l)) \overset{h_0}{\to} \R$ that is proportional to the 
integral of $\omega$ over the disks. Thus $L$ will be rational if and 
only if there exists $r >0$ such that the image of this representation 
has the form $\Z \cdot r$ for all base points $l \in L$. 

In particular the rationality condition is slightly stronger than the
condition of rationality for the relative symplectic class: there may
be loops in the Lagrangian which are not bounding, which nevertheless have
rational holonomies by the above condition.  On the other hand, the
rational Bohr-Sommerfeld condition implies rationality of the relative
symplectic class since the pairings with relative cycles may be
computed by Stokes' theorem.

In order to produce stabilizing divisors we apply a modification of
Donaldson's construction \cite{don:symp}, \cite{auroux:remark} introduced by
Auroux-Gayet-Mohsen \cite{auroux:complement}, with a minor improvement
in the exactness property for rational Lagrangians.  Donaldson's
construction proceeds by the construction of {\em approximately
  holomorphic} sequences of sections of line bundles.  These sections
are then carefully perturbed to obtain the approximately holomorphic
submanifolds.  Fix an $\omega$-compatible almost complex structure
$J\in \mathcal{J}(X,\omega)$ so that $\omega(\_,J\_)$ defines a
Riemannian metric $g$ on $X$.  Recall that for any $2m$-dimensional
submanifold $D \subset X$ the K\"ahler angle is a map measuring the
failure of $D$ to be an almost complex submanifold.  The K\"ahler
angle is defined by
$$\on{ang}_D: D \to [0, \pi], \quad 
x \mapsto \cos^{-1}\left( \frac{ \omega_x^m }{  m! \Omega_{T_xD}}
\right) $$
where $\Omega_{T_xD}$ is the volume form defined by the metric and
orientation \cite[p. 325]{cm:trans}.  A codimension two submanifold
$D \subset X$ is called {\em $\theta$-approximately $J$-holomorphic}
if its K\"ahler angle satisfies
$\on{ang}_D(x) < \theta,\forall x \in D$.

\begin{theorem} \label{bs_divi} Let $X,L \subset X$ be rational and
  $J \in \J_\tau(X,\omega)$.  There exists an integer $k_{m} >0$ such
  that for every $\theta>0$ there is an integer $k_\theta>0$ such that
  for every $k > k_\theta$ there exists a $\theta$-approximately
  $J$-holomorphic divisor $D$ of degree $t_0k_{m} k$ stabilizing for
  $L$ and such that $L$ is exact in $(X-D,\omega)$.
\end{theorem}

\begin{proof}
  The rationality assumption allows us to start, in Donaldson's
  construction in \cite{don:symp}, with a section that is covariant
  constant over the Lagrangian.  We suppose that $\omega$ is integral
  (that is, $m_0=1$) so that there is a line bundle $\ti{X}$ over $X$
  with Hermitian connection $\nabla$ whose curvature is
  $-2 \pi i \omega$ and such that $\ti{X}^{\otimes t_0}|_L$ is
  trivial.  Since $L$ is rational, there is an integer $k_{m}>0$ such
  that $\ti{X}^{\otimes t_0k_{m}}|_L$ together with the connection
  $\nabla^{\otimes t_0k_{m}}$ is trivial. There is therefore a choice
  of {\em exactly} flat unitary sections
  $\tau_{k}:L \rightarrow \ti{X}^{\otimes t_0k_{m}k}|_L$ over the
  Lagrangian at the beginning of the Auroux-Gayet-Mohsen construction
  \cite{auroux:complement}, that choice being unique up to the $S^1$
  action given by scalar multiplication on the fibers.  As explained
  in Auroux-Gayet-Mohsen \cite{auroux:complement}, Donaldson's
  perturbation scheme gives asymptotically holomorphic sections $s_k$
  which define codimension two symplectic submanifolds
  $D=s_k^{-1}(0) \subset X-L$ for $k$ sufficiently large.

  It remains to show that the Lagrangian $L$ is exact in the
  complement of the submanifolds $D$ produced in the previous
  paragraph.  The quotient $ s_k/ | s_k | $ gives a trivialization of
  the line bundle away from $D = s_k^{-1}(0)$.  The connection
  one-form $\beta_k \in \Omega^1(X - D)$ for
  $\nabla^{\otimes t_0k_{m}k}$ with respect to this trivialization
  satisfies $ \d \beta_k = \omega |_{X - D}$, which vanishes on $L$.
  The flat connection defined by the trivialization $s_{k} |_L $
  (which is only approximately equal to
  $\nabla^{\otimes t_0k_{m}k} |_L $) has the same trivial holonomies
  as the restriction of $\nabla^{\otimes t_0k_{m}k}$ to $L$.  Hence,
  the two connections with flat sections $s_{k}, \tau_{k}$ are gauge
  equivalent by some gauge transformation $g_{k}: L \to U(1)$.  Thus
  $\beta_{k} = g_{k}^{-1} \d g_{k}$.  We wish to show that $g_{k}$ is
  in the identity component of the group of gauge transformations.
  Lemma 4 of \cite{auroux:complement} states that the phase of the
  restriction of $s_{k}$ to $L$ with respect to $\tau_{k}$ is less
  than $\pi/4$ in absolute value.  It follows that $s_{k} $ is related
  to the trivial section $\tau_{k} $ by the exponential of an
  infinitesimal gauge transformation:
\begin{equation}  \label{infgauge}
 \exists \xi_k : L \to \R, \quad   s_{k} |_L = \exp(2 \pi i \xi_k)
\tau_k . \end{equation}
So $ \beta_{k} |_L = \d \xi_{k} $ is exact as claimed.
\end{proof}

Donaldson's construction is a symplectic imitation of well-known
results in algebraic geometry.  Many of the examples we have in mind
are actually smooth projective varieties, and the results of Donaldson
and Auroux-Gayet-Mohsen are not necessary.  Indeed smooth divisors in
smooth projective varieties exist by Bertini's theorem
\cite[II.8.18]{ha:al}.  Holomorphic sections concentrating near
rational Lagrangian submanifolds exist by e.g. results of
Borthwick-Paul-Uribe \cite{bo:le}.  Generic perturbations of the
divisors corresponding to these sections are smooth and intersect any
other divisor transversally, by Bertini again.

The following are well-known properties of the (rational)
Bohr-Sommerfeld condition:

\begin{example} 
\begin{enumerate} 
\item {\rm (Disjoint unions)} Let $X$ be a rational symplectic
  manifold and $\ti{X} \to X$ a linearization.  Let
  $L_0,L_1 \subset X$ be rational disjoint Lagrangians so that
  $\ti{X} |_{L_b}$ is trivial for $b =0,1$.  Then the restriction of
  $\ti{X}$ to the disjoint union $L_0 \cup L_1$ is again trivial, so
  $L_0 \cup L_1$ is also rational.
\item {\rm (Products)} If $X_i$ is a rational symplectic
  manifold with linearization $\ti{X}_i \to X_i$ with $c_1(\ti{X}_i)=
  k_i [\omega_i]$,  $i = 0,1$, then 
$ \ti{X}_0^{\otimes k_1} \boxtimes \ti{X}_1^{\otimes k_0} \to X_0
  \times X_1$
is a linearization for $X_0 \times X_1$.  If $L_i \subset X$ is
rational for $X_i$ for the line bundle $\ti{X}_i$, 
$i = 0,1$, then
$L_0 \times L_1 \subset
  X_0 \times X_1$
is a rational Lagrangian with line bundle $ \ti{X}_0^{\otimes k_1}
\boxtimes \ti{X}_1^{\otimes k_0}$.  
\item {\rm (Duals)} Let $(X,\omega)$, or $X$ for short, be a
  symplectic manifold.  The {\em dual} $(X,\omega)^- = (X,-\omega)$ of
  $(X,\omega)$ is obtained by changing the sign on the symplectic
  form.  A linearization of $X^-$ is given by the inverse line bundle
  $ \ti{X}^{-1}$.  Any rational Lagrangian $L \subset X$ is also
  rational in the dual $X^-$ since the trivialization of $\ti{X}$ on
  $L$ induces a trivialization of the inverse $\ti{X}^-$.
\item {\rm (Diagonals)} Let $X$ be a linearized symplectic manifold
  and $\Delta \subset X^- \times X$ the diagonal Lagrangian.  The
  restriction satisfies   $  ( \ti{X}^- \boxtimes \ti{X}) |_\Delta \cong \ti{X}^-
  \otimes \ti{X} \cong X \times \C $.  It follows that 
 $\Delta$ is rational.
\item {\rm (Hamiltonian perturbations)} Let
  $H \in C^\infty(\R \times X)$ be a time-dependent Hamiltonian with
  Hamiltonian vector field  $\widehat{H} \in \Map(\R, \Vect(X))$.
  If $L \subset X$ is rational, then so is $\phi_t(L)$ for any
  $t$. Indeed the flow $\phi_t$ lifts to an isomorphism
  $\ti{\phi_t}: \ti{X} \to \ti{X}$ of line bundles with connection: If
  $\alpha \in \Omega^1(\ti{X}_1)$ is the connection one-form and
  $\ti{H} \in \Vect(\ti{X})$ is the lift of $\widehat{H}\in \Vect(X)$
  with $\alpha(\ti{H}) = - H$ then by Cartan's formula
$$ L_{\ti{H}} \alpha = \iota( \ti{H} ) \d \alpha + \d \iota(\ti{H})
 \alpha = \iota(\hat{H}) \omega - \d H = \d H - \d H = 0 .$$
After restriction one obtains an isomorphism from $\ti{X} |_{\phi_t(L)}$
to $\ti{X} |_L$.
\item {\rm (Graphs of Hamiltonian diffeomorphisms)} In particular, if
$\phi: X \to X$ is a Hamiltonian diffeomorphism then $(1 \times \phi)
\Delta \subset X \times X$ is a rational Lagrangian.
\end{enumerate} 
\end{example} 

It follows from the last item above that transversally-intersecting
rational Lagrangians may always be perturbed so that the union is
rational.  Indeed choose a Hamiltonian perturbation
$H \in C^\infty(X)$ near any intersection point $p \in L_0 \cap L_1$
with $H(p)$ non-zero and is tangent to $L_0$ in a neighborhood of $p$.
Let $\phi_t:X \to X$ denote the flow of $H$ and
$p_t \in L_0 \cap \phi_t(L_1)$ (for $t$ small) the unique family of
intersection points with $p_0 = p$, given by flowing $p_t$ under
$\widehat{H}$.  The trivializing flat section of $\phi_t(L_1)$ is obtained
by flowing the trivializing flat section of $L_1$ under $\ti{H}$.  The
phase change at $p_t$ between the flat sections over $L_0$ and
$\phi_t(L_1)$ is the integral of $H(p_s), s \in [0,t]$, and can be
made rational by suitable choice of $H$.  After suitable tensor
products the phase changes become identities and so the flat sections
glue to a flat section over the union.

For a general Lagrangian submanifold, it is still possible to find
divisors that still intersect {\em holomorphic} non-constant disks for
a given holomorphic structure.  We introduce the following definition.

\begin{definition} \label{defi_weak}
A divisor $D \subset X$ is {\em weakly stabilizing} for a Lagrangian 
submanifold $L \subset X$ if and only if
\begin{enumerate}
\item[(a)\hspace{0.12cm }] the divisor $D$ is disjoint from the
  Lagrangian $L$, that is, $L \cap D = \emptyset$, and
\item[(b)'] there exists an almost-complex structure
  $J_D \in \mathcal{J}(X,\omega)$ adapted to $D$ such that any
  $J_D$-holomorphic disk $u: (C,\partial C) \rightarrow (X,L)$ with
  $\omega([u])>0$ intersects $D$ in at least one point, that is,
  $u^{-1}(D) \neq \emptyset$.
    \label{cond_b'}
\end{enumerate}
\end{definition}

\begin{lemma} \label{rati_divi2} Let $L$ be a Lagrangian, not
  necessarily rational, and $J \in \J_\tau(X,\omega)$.  There exists
  an integer $k_{m}>0$ such that for every $\theta>0$, there is an
  integer $k_\theta>0$ such that for every $k>k_\theta$ there exists a
  $\theta$-approximately $J$-holomorphic divisor $D$ of degree
  $t_0k_{m} k$ that is weakly stabilizing for $L$.
\end{lemma}

\begin{proof}
  We will choose the divisor as in Auroux-Gay-Mohsen
  \cite{auroux:complement}, and then show that the relationship
  \eqref{area} between intersection number and area holds up to a
  small error.  In order to carry this out we wish to choose the
  sections $\tau_k$ so that they satisfy a topological condition:
  Given a class $[u] \in h_2(\pi_2(X,L))$, the intersection number
  $[u]\cdot [D]$ is given by the degree of $\tau_k$ over $\partial u$,
  and is equal to the degree of $s_k$.  On the other hand, the
  holonomy (calculated in multiples $2\pi$) of
  $\nabla^{\otimes t_0 k}$ on $[\partial u] \in H_1(L)$ with respect
  to a trivialization over $[u]$ is $t_0 k \omega([u])$.  Indeed,
  denote by $\alpha_{\tau_k}$ the connection 1-form of
  $\nabla^{\otimes t_0k}|_L$ in the trivialization given by
  $\tau_k$. Note that since
  $\d (\alpha_{\tau_k}|_L) = (\d\alpha_{\tau_k})|_L = t_0 k \omega|_L
  \equiv 0$,
  $\alpha_{\tau_k}$ defines a class $[\alpha_{\tau_k}] \in H^1(L,\R)$
  which only depends on the homotopy class of the section
  $\tau_k$. Changing the class of $\tau_k$ amounts to changing the
  class of $\alpha_{\tau_k}$ by an element of $H^1(L,\Z)$. More
  precisely, if $\tau_k$ and $\tau_k'$ are two trivializing sections,
  then their connection $1$-forms satisfy
  $[\alpha_{\tau_k'}-\alpha_{\tau_k}] = [\alpha] \in H^1(L,\Z)$, so that
  changing the homotopy class of the trivializing section $\tau_k$
  corresponds to adding an integer class $[\alpha] \in H^1(L,\Z)$ to the
  class $[\alpha_{\tau_k}] \in H^1(L,\R)$.  As in \eqref{area} one has
  a relationship between the intersection number and area:
\begin{equation} \label{inteform}
[u]\cdot [D] = t_0k \int_{C} u^*\omega - \int_{\partial C} 
u^*\alpha_{\tau_k} 
\end{equation}
for every $[u] \in h_2(\pi_2(X,L))$. 

Next we obtain a bound of the homology class of a disk in terms of its
boundary class.  Choose bases $\{\beta_1, \ldots, \beta_p\}$ and
$\{ \alpha_1, \ldots, \alpha_q\}$ for (the torsion-free part of)
$H^2(X,L)$ and $H^1(L)$, respectively. Define
$$\| [u] \|= \underset{j}{\max}\{|\beta_j([u])|\}, \quad 
\| [\partial u] \|= \underset{j}{\max}\{|\alpha_j([\partial u])|\} .$$
Let $\| \partial \|$ be the norm of $\partial$ with respect to the
latter norms, so that
\[
\| [\partial u] \| \leq \| \partial \| \| [u] \|
\]
for every $[u] \in h_2(\pi_2(X,L))$.  By uniform boundedness of the
forms $\alpha_{\tau_k}$, there exists a constant $C_{\alpha} > 0$ such
that for all $[u]$,
\begin{equation}
|\alpha_{\tau_k}([\partial u])| \leq C_{\alpha} \| [\partial u]\|  \leq C_{\alpha} \|\partial \| \|[u]\| \label{equ_top}.
\end{equation} 

Secondly, we show that the homology class of a holomorphic disk can be
bounded in terms of its area: for every almost-complex structure
$J_D \in \J_\tau(X,\omega)$ such that $\| J_D - J \| < \theta'$, there
exists a constant $C_{\beta, \theta'}>0$
\begin{equation} \label{equ_hol}
\| [u] \| \leq C_{\beta, \theta'} \omega([u]) 
\end{equation}
for every $J_D$-holomorphic disk $u: (C,\partial C) \rightarrow
(X,L)$.  Equation \eqref{equ_hol} is mostly a relative version of
\cite[Proposition 4.1.5]{ms:jh}
or \cite[Lemma 8.19]{cm:trans}.  Its proof goes as follows.  For every
$2$-form $\hat{\beta} \in \Omega^2(X,L)$ on $X$ vanishing on $L$ we
have
\begin{eqnarray*} 
\hat{\beta}(v,J_{D}v) &=& \hat{\beta}(v,(J + (J_{D}-J)) v) \\
&\leq&  \|\hat{\beta}\|_{L^\infty} \|v\|^2 + \|\hat{\beta}\|_{L^\infty} \theta' \|v\|^2 \\
&=& (1+\theta') \|\hat{\beta}\|_{L^\infty} \|v\|^2 \end{eqnarray*}
where $\|J_D-J\| < \theta'$ and the norms are again taken with respect
to the metric $g(\_,\_) = \frac{1}{2}(\omega(\_,J\_) +
\omega(J\_,\_))=\omega(\_,J\_)$.  Similarly, $\omega(v,J_{D}v) \geq
(1-\theta') \|v\|^2$, so that
\[
\hat{\beta}(v,J_D v) \leq \frac{1+\theta'}{1-\theta'} \|\hat{\beta}\| 
\omega(v,J_{D}v).
\]
Therefore, if $u: (C,\partial C) \rightarrow (X,L)$ is a $J_D$-holomorphic disk,
\[
\hat{\beta}([u]) = \int_{C} \hat{\beta} \leq \frac{1+\theta'}{1-\theta'} 
\|\hat{\beta}\| \int_{C} \omega = \frac{1+\theta'}{1-\theta'} \|\hat{\beta}\| 
\omega([u])
\]
and 
\begin{eqnarray*} 
\|[u]\| &=& \underset{j}{\max}\{|\beta_j([u])|\} \leq  \frac{1+\theta'}{1-\theta'} \|\hat{\beta_j}\| \omega([u]) \\
& \leq&  \frac{1+\theta'}{1-\theta'} \underset{j}{\max}\{\|\hat{\beta_j}\|\}
\omega([u]) 
 =  C_{\beta,\theta'} \omega([u]) \end{eqnarray*} 
where $\hat{\beta_j}$ is a representative of $\beta_j$, $1 \leq j \leq p$, and 
$C_{\beta,\theta'} = \frac{1+\theta'}{1-\theta'}
\underset{j}{\max}\{\|\hat{\beta_j}\|\}$.

Combining inequalities \eqref{equ_top} and \eqref{equ_hol} gives a
uniform bound on the boundary term of \eqref{inteform} of any
holomorphic disk in terms of its area:
\[
|\alpha_{\tau_k}([\partial u])| \leq C_{\alpha} C_{\beta,\theta'}
\|\partial\|  \omega([u])
\]
for every $J_D$-holomorphic disk
$u: (C,\partial C) \rightarrow (X,L)$.  Then equation \eqref{inteform}
gives
\begin{eqnarray*}
[u]\cdot [D] &=& t_0k \int_{C} u^*\omega - \int_{\partial C}
u^*\alpha_{\tau_k} \geq t_0k u([\omega]) - C_{\alpha} \|\partial\|
C_{\beta,\theta'} \omega([u]) \\ &=& (t_0k - C_{\alpha} \|\partial\|
C_{\beta,\theta'} ) \omega([u])
\end{eqnarray*} 
where the constants $C_{\alpha}$ and $C_{\beta,\theta'}$ do not
depend on $k$.
Hence for large enough values of $k$, $[u]\cdot [D] > 0$ for 
every $J_D$-holomorphic disk $u: (C,\partial C) \rightarrow (X,L)$
with $\omega([u])>0$.
\end{proof}

\begin{remark}
\begin{enumerate} 
\item The sections used in the proof of Lemma \ref{rati_divi2} are the
  same that are used in \cite{auroux:complement}. The associated
  divisors become weakly stabilizing in the sense of \eqref{defi_weak}
  as $k$ increases.

\item In the case of a general Lagrangian $L$ as in Lemma
  \ref{rati_divi2}, the intersection numbers of the $J_D$-holomorphic
  disks with boundary in $L$ with the divisor $D$ are no longer
  necessarily proportional to their symplectic areas.

\item The reverse isoperimetric inequality of Groman and Solomon
  \cite{gs:iso} would allow for a more direct proof of the above
  result if $J_D$ could be chosen to be $C^2$-close to $J$ instead of
  $C^0$-close as we are given.  Indeed, \cite[Theorem 1.1]{gs:iso}
  states that there exists a constant $f>0$ such that
$
\on{length}(\partial u,g) \leq f \omega([u])
$
for every $J$-holomorphic disk $u: (C,\partial C) \rightarrow (X,L)$,
while \cite[Theorem 1.4]{gs:iso} extends that inequality to a
$C^2$-neighborhood of $J$. Then one could argue that since 
$|\nabla^{\otimes t_0 k} \tau_k| < C_L$ for every $k>0$,
\[
|\alpha_{\tau_k}([\partial u])| = \left|\int_{\partial u} \nabla^{\otimes t_0 k} \tau_k \right| 
\leq C_L \on{length}(\partial u,g) \leq C_L f \omega([u])
\]
uniformly for all $J_D$-holomorphic disks.
\end{enumerate}
\end{remark}

Next we turn to pairs of Lagrangians, needed to define Lagrangian
Floer theory.  Note that divisors that are stabilizing for each
$L_0,L_1$ individually, are not necessary stabilizing for the union.
For example, take $L_0$ to be a circle in the symplectic two-sphere,
and $L_1$ to be a small Hamiltonian perturbation.  Even if $D$ is
stabilizing for both $L_0$ and $L_1$, there may be non-constant
holomorphic strips that do not intersect $D$.  The following Lemma
provides divisors so that holomorphic strips with boundary in
$L_0,L_1$ intersect the divisor at least once.

\begin{lemma}
\label{auroux}
\begin{enumerate}
\item \label{lpair} {\rm (Existence for pairs of rational
Lagrangians)}  
  Suppose that $L_0,L_1$ are  rational Lagrangians intersecting
  transversally and $L_0 \cup L_1$ is  rational with line
  bundle $\ti{X}$.  Then there exists a divisor $D$ such that $L_0
  \cup L_1$ is exact in $X - D$ in the sense that there exists a
  one-form 
$$\alpha \in \Omega^1(X - D), \quad \d \alpha = \omega | (X
  - D)$$ 
  and functions $\phi_0: L_0 \to \R, \ \phi_1: L_1 \to \R$ such that
$$
d \phi_b =
\alpha | L_b, b \in \{0,1 \}, \quad \phi_0 |_{L_0 \cap L_1} = \phi_1
|_{L_0 \cap L_1} .$$

\item \label{lpair2} {\rm (Existence for pairs of Lagrangians)} Let
  $L_0,L_1$ be Lagrangians intersecting transversally, let
  $t_0 = |\Tor(H_1(L_0\cup L_1))|$ and let $J \in
  \J_\tau(X,\omega)$.
  There exists an integer $k_{m}>0$ such that for every $\theta>0$,
  there is an integer $k_\theta>0$ such that for every $k>k_\theta$
  there exists a $\theta$-approximately $J$-holomorphic divisor $D$ of
  degree $t_0k_{m} k$ that is weakly stabilizing for $L_0 \cup L_1$.

\item \label{uniq} {\rm (Uniqueness)} Suppose that
  $(J_t \in \J_\tau(X,\omega), t\in [0,1])$ is a time-dependent almost
  complex structure. Let $D_{t}$ be stabilizing divisors for $L$
  (resp. a pair $L_b, b = 0,1$) constructed as the zero set of
  approximately $J_t$-holomorphic sections
  $s_{k,t}: X \rightarrow \ti{X}^{\otimes t_0k_{m}k}$ built from
  homotopic unital sections
$$\tau_{k,t}: L \rightarrow \ti{X}^{\otimes t_0k_{m}k}|_L,
  \ k>k_{0,t}, \ t \in \{ 0, 1\}$$ 
(resp. sections $s_{b,k,t}$ and $\tau_{b,k,t}$ for $b = 0,1$.)
 
  Then there exists an integer $k_{m}>0$ such that for every
  $\theta>0$ there is an integer $k_\theta$ such that for
  $k>k_\theta$, there exists a smooth family $D_{t}$, $t \in [0,
    1]$, of $\theta$-approximately $J_t$-holomorphic divisors
  stabilizing for $L$ (resp. $L_0 \cup L_1$) of degree $t_0 kk_{m}$
  connecting $D_{0}$ and $D_{1}$. Moreover, there exists a
  symplectic isotopy $\{\phi_t \}_{0\leq t\leq 1}$ preserving $L$ such
  that $D_{t} = \phi_t(D_{0})$.
 \end{enumerate} 
\end{lemma}

\begin{proof}  
  For part (a), suppose $L_0 \cup L_1$ is rational with respect to
  some linearization $\ti{X}$.  Let $\tau_0$ resp. $\tau_1$ be
  trivializing sections of norm one of $\ti{X} |_{L_0}$ resp.  $\ti{X}
  |_{L_1}$ so that $\tau_0(p) = \tau_1(p)$ for each $p \in L_0 \cap
  L_1$.  Let $\sigma_{0,k},\sigma_{1,k}: X \to \ti{X}^{\otimes k}$ be
  a sequence of sections constructed from $\tau_0,\tau_1$ that are
  concentrated along $L_0,L_1$ as in \cite[p.7
  Remark]{auroux:complement}.  The sum $\sigma_{0,k} + \sigma_{1,k}: X
  \to \ti{X}$ is non-zero at each point in $L_0 \cup L_1$ for $k$
  sufficiently large, since for example on $L_0$ it is the sum of a
  section of norm one and a second of norm less than one.

  The remainder of the argument is the same as in the case of a single
  Lagrangian: A small perturbation of $\sigma_{0,k} + \sigma_{1,k}$ as
  in \cite{auroux:complement} then defines a divisor $D =
  (\sigma_{0,k} + \sigma_{1,k})^{-1}(0)$ disjoint from $L_0 \cup L_1$.
  The restriction of the perturbed section to $D$ may be assumed to
  differ from the trivializations $\tau_0,\tau_1$ by phase at most
  $\pi/4$.  The union $L_0 \cup L_1$ is exact in the complement by the
  previous argument around Equation \eqref{infgauge}.

For part (b), let $\tau_{k,0}$ resp. $\tau_{k,1}$ be trivializing
sections of $\ti{X}^{t_0 k_m k} |_{L_0}$ resp.
$\ti{X}^{\otimes t_0 k_m k}|_{L_1}$ with uniformly bounded
derivatives. Let $\{p_i\}_{1\leq I}$ be the (finite) set of
intersection points of $L_{0}$ and $L_{1}$.  In general,
$\tau_{k,0}(p_i) \neq \tau_{k,1}(p_i)$ for some subset of the indices
in $I$.  We would like to continuously deform one of those
trivializing sections, say $\tau_{k,1}$, to obtain trivializing
sections $\tau'_{k,0}=\tau_{k,0}$ and $\tau'_{k,1}$ in the same
homotopy classes (of non-vanishing sections) as $\tau_{k,0}$ and
$\tau_{k,1}$, respectively, such that
\begin{itemize}
\item 
{\rm (Matching condition)}  $\tau'_{k,0}(p_i) = \tau'_{k,1}(p_i)$ for all indices $i \in I$, and
\item {\rm ($C^2$-bound)}  $|\tau'_{k,0}(p_i)| + |\nabla \tau'_{k,0}(p_i)|_g + 
|\nabla \nabla \tau'_{k,0}(p_i)|_g < C$
and $|\tau'_{k,1}(p_i)|_g + |\nabla \tau'_{k,1}(p_i)|_g + 
|\nabla \nabla \tau'_{k,1}(p_i)|_g < C$
for some constant $C$ (independent of $k$).
\end{itemize}
\noindent To construct the sections $\tau'_{k,1}$, first define the numbers
$$\theta_{k,i} := \arg(\tau_{k,0}(p_i)) - \arg(\tau_{k,1}(p_i)) \in ]-\pi,\pi] .$$
Choose smooth functions 
$\theta_k: L_1 \to ]-\pi,\pi]$ with $\theta_k(p_i)=\theta_{k,i}, \forall i \in I $ 
satisfying the uniform bounds
$|\theta_k| + |\nabla \theta_k|_g + |\nabla \nabla \theta_k|_g <
C_\theta$.
Such functions exist since $|I|<\infty$, $L_1$ and $[-\pi,\pi]$ are
both compact and $g$ does not depend on $k$.  Define
$\tau'_{k,1}:=e^{2\pi i \theta_k} \cdot \tau_{k,1} .$
By definition, 
$$\arg(\tau'_{k,0}(p_i)) - \arg(\tau'_{k,1}(p_i)) =
\arg(\tau_{k,0}(p_i)) - \arg(\tau_{k,1}(p_i)) - \theta_{k,i} = 0, \quad 1\leq i \leq I $$ 
so the (Matching Condition) holds.  Furthermore, since there are
uniform bounds on the derivatives of both $\tau_{k,1}$ and $\theta_k$,
one can find a constant $C_{L_1}$ such that $|\tau'_{k,1}(p_i)| +
|\nabla \tau'_{k,1}(p_i)|_g + |\nabla \nabla \tau'_{k,1}(p_i)|_g <
C_{L_1}$ so the ($C^2$-bound) holds as well.

One can therefore take $\sigma_{0,k},\sigma_{1,k}: X \to 
\ti{X}^{\otimes t_0 k_m k}$ a sequence of sections constructed from 
$\tau_0,\tau_1$ that are concentrated along $L_0,L_1$ as in \cite[p.7
Remark]{auroux:complement}. The sum $\sigma_{0,k} + \sigma_{1,k}$ is 
bounded from below at each point in $L_0 \cup L_1$ and can be
used as an asymptotically holomorphic family of sections concentrated
along $L_0 \cup L_1$. The fact that the resulting divisor 
$D = s_k^{-1}(0) \subset X - (L_0 \cup L_1)$ is weakly stabilizing 
for a general pair $L_0 \cup L_1$
follows 
as in the proof of Lemma \ref{rati_divi2}. 

(c) The uniqueness statement is based on results of Auroux
\cite{auroux:asym} and Auroux-Gayet-Mohsen \cite{auroux:complement}
and Lemmas of the previous section: Any family of approximately
$J_{k,t}$-holomorphic sections $s_{k,t}$ built from homotopic unitary
sections $\tau_{k,t}$, $k>k_{0,t}$, $t \in \{ 0, 1\}$ can be modified,
for large enough $k$, to a family $s_{k,t}$ so that the zero sets
$D_{t} = s_{k,t}^{-1}(0)$ are approximately holomorphic and related
by a symplectic isotopy preserving $L$. In \cite{auroux:complement},
the case where the family $J_{k,t}$ is constant is considered.

If the almost complex structure is time-dependent, then the argument
of \cite{auroux:complement} can be modified as follows: The
almost-complex structure $J_{t}$ provides a way to identify a
Weinstein neighborhood $V_{t}$ of $L$, i.e. $V_{t}$ is
symplectomorphic to a neighborhood of the $0$-section in $T^*L$.
Choosing a trivialization of
$\tilde{X}|_L^{\otimes t_0 k_m} \rightarrow L$ and extending it
properly to $V_{t}$, let $p$ denote the local coordinate on the fibers
of $T^* L$.  Let $g_t$ denote the metric on the fibers of $T^*L$
induced through the identification by the metric $g_t|_L$ associated
with $\omega$ and $J_t$.  The norm $|p|_{g_t}$ is a globally defined
function on $T^*L$.  Define
$\sigma_{k,L,t} = e^{-\frac{1}{2} t_0 k_m k |p|^2_{g_t}}$.
Multiplying $\sigma_{k,L,t}$ by an appropriate
cutoff function, one verifies that the resulting sections $s_{k,L,t}$
are asymptotically $J_t$-holomorphic sections concentrated over $L$,
$t\in [0,1]$, from which the asymptotically $J_t$-holomorphic
$s_{k,t}$ can be built, $t\in \{0,1\}$.

One can then use a one-parameter perturbation argument from
\cite{auroux:asym} to obtain a family of divisors.  Consider sections
$s'_{k,t}$ that are equal to
$(1-3t)s_{k,0} + 3t s_{k,L,0}$ 
for $t \in [0,\frac{1}{3}]$, to 
$s_{k,L,3t-1}$ 
for $t \in [\frac{1}{3},\frac{2}{3}]$ and to
$(3-3t) s_{k,L,1} + (3t-2) s_{k,1}$ 
for $t \in [\frac{2}{3},1]$. The family $s_{k,t}$ is a one-parameter
family of asymptotically $J_{\min(1,\max(0,3t-1))}$-holomorphic
sections non-vanishing on over $L$.  One can then invoke \cite[Theorem
2]{auroux:asym} to get perturbed sections $s_{k,t}$ that are
transverse to the $0$-section, asymptotically $J_t$-holomorphic
sections and non-vanishing over $L$. Note that this requires raising
the degree of $D_{0}$ and $D_{1}$. The fact that the corresponding
isotopy between $(s_{k,0})^{-1}(0)$ (isotopic to $D_{0}$) and
$(s_{k,1})^{-1}(0)$ (isotopic to $D_{1}$) can be realized through a
symplectic isotopy preserving $L$ then follows from the argument of
\cite[Section 4]{auroux:asym}.  Regarding the compatibility with $L$
condition, note that the cohomology class $[\alpha_{\tau_k}]$ only
depends on the homotopy class of $\tau_k$.  In the case of a pair
$L_0,L_1$, the above argument is repeated with
$s_{k,L_0,t}+ s_{k,L_1,t}$ replacing $s_{k,L,t}$.
\end{proof} 

\subsection{Coherent perturbations} 

In this section we use the stabilizing divisors in the previous
section to allow the almost complex structures to vary over the
domains.  Existence of the gluing maps will require the
domain-dependent almost complex structures to satisfy coherence
conditions related to the Behrend-Manin morphisms introduced in
Section \ref{bm}.

We begin by fixing an almost complex structure that we wish to
perturb, and this almost complex structure should make the Donaldson
hypersurface almost complex so that the intersection multiplicities
with holomorphic curves are positive.  Given a divisor $D$, we denote
by $\J_{\tau}(X,D)$ the space of $\omega$-tamed almost complex
structures $J_D$ for which $D$ is an almost complex submanifold. If
$J \in \J_{\tau}(X,\omega)$ and $\theta > 0$, let
$\J_{\tau}(X,D,J,\theta)$ be the subset of elements $\theta$-close to
$J$ in $C^0$ norm.  The space $\J_{\tau}(X,D,J,\theta)$ is non-empty
by \cite[Section 8]{cm:trans}.

Domain-dependent almost complex structures are almost complex
structures depending on a point in the universal strip, and equal to
the fixed almost complex structure at the boundary and nodes.  If
$\Gamma$ is a type of stable strip with a single vertex, then
$\U_\Gamma$ is smooth and it makes sense to talk about class $C^l$
maps for some integer $l \ge 0$ from $\U_\Gamma$ to a target manifold.
More generally, if $\Gamma$ has interior edges then let $\Gamma'$ be
the graph obtained by cutting all edges.  By definition a map from
$\U_{\Gamma}$ of class $C^l$ is a map from $\U_{\Gamma'}$ of class
$C^l$ satisfying the matching condition at the markings identified
under the map $\U_{\Gamma'} \to \U_\Gamma$.  For any type $\Gamma$,
the compactified universal strip $\ol{\U}_\Gamma$ is a manifold with
corners away from the boundary nodes.  Given an almost complex
structure $J_D \in \J_\tau(X,D)$ we say that a map from
$\ol{\U}_\Gamma$ to $\J_\tau(X,D)$ agreeing with $J_D$ near the nodes
is of class $C^l$ if it is $C^l$ away from the boundary nodes.

\begin{definition} \label{pertdef} Let $J_D \in \J(X,D)$ and $l \ge 0$.  
A {\em perturbation datum} (resp. perturbation datum of class $C^l$)
adapted to $D$ for a type $\Gamma$ is a smooth (resp. $C^l$) map
$$ J_\Gamma: \ol{\U}_{\Gamma} \to \J_{\tau}(X,\omega) $$
such that 
\begin{enumerate}
\item {\rm (Compatible with the divisor)} $J_\Gamma(z)\in \J_\tau(X,D)$
  and $J_\Gamma(z)|_D \equiv J_D$ for all $z \in \ol{\U}_{\Gamma}$.
\item {\rm (Constant near the nodes and markings)} The restriction of
  $J_\Gamma$ to a neighborhood of any node or boundary marking is
  equal to $J_D$.

\item {\rm (Constant at the boundary)} The restriction of $J_\Gamma$
  to the boundary of the (nodal) strips is equal to $J_D$.
\end{enumerate} 
\end{definition}

The following are three operations on perturbation data:

\begin{definition} 
\begin{enumerate} 
\item {\rm (Cutting edges)} Suppose that $\Gamma$ is a combinatorial
  type and $\Gamma'$ is obtained from $\Gamma$ by cutting edges
  corresponding to nodes connecting strip-like components.  A
  perturbation datum for $\Gamma'$ gives rise to a perturbation datum
  for $\Gamma$ by pushing forward $J_{\Gamma'}$ under the map
  $\ol{\U}_{\Gamma'} \to \ol{\U}_\Gamma$, which is well-defined by the
  (Constant near the nodes and markings) axiom.
\item {\rm (Collapsing edges/making an edge finite or non-zero)}
  Suppose that $\Gamma'$ is obtained from $\Gamma$ by collapsing an
  edge or making an edge finite or non-zero.  Any perturbation datum
  $J_{\Gamma'}$ for $\Gamma'$ induces a datum for $\Gamma$ by pullback
  of $J_{\Gamma'}$ under $\ol{\U}_{\Gamma} \to \ol{\U}_{\Gamma'}$.
\item {\rm (Forgetting tails)} Suppose that $\Gamma'$ is a
  combinatorial type of stable strip is obtained from $\Gamma$ by
  forgetting an marking.  In this case there is a map of universal
  disks $\ol{\U}_{\Gamma} \to \ol{\U}_{\Gamma'}$ given by forgetting
  the marking and stabilizing.  Any perturbation datum $J_{\Gamma'}$
  induces a datum $J_\Gamma$ by pullback of $J_{\Gamma'}$.
\end{enumerate}
\end{definition} 

We are now ready to define coherent collections of perturbation data.
These are data which behave well with each type of operation above.

\begin{definition} \label{coherent} {\rm (Coherent families of perturbation data)} A
  collection of perturbation data $ \ul{J} = (J_\Gamma ) $ is
  {\em coherent} if it is compatible with the Behrend-Manin morphisms
  in the sense that
\begin{enumerate} 
\item {\rm (Cutting edges axiom) } if $\Gamma'$ is obtained from
  $\Gamma$ by cutting an edge corresponding to a strip-like edge, then
  $J_\Gamma$ is the pushforward of $J_{\Gamma'}$;
\item {\rm (Collapsing edges/make an edge finite or non-zero axiom)}
  if $\Gamma'$ is obtained from $\Gamma$ by collapsing an edge, then
  $J_{\Gamma}$ is the pullback of $J_{\Gamma'}$;
\item {\rm (Product axiom)} if $\Gamma$ is the union of types 
  $\Gamma_1,\Gamma_2$ then $J_\Gamma$ is obtained from $J_{\Gamma_1}$
  and $J_{\Gamma_2}$ as follows: Let $\pi_k: \ol{\M}_\Gamma \cong
  \ol{\M}_{\Gamma_1} \times \ol{\M}_{\Gamma_2} \to \ol{\M}_{\Gamma_k}$
  denote the projection on the $k$-factor, so that $\ol{\U}_\Gamma$ is
  the union of $\pi_1^* \ol{\U}_{\Gamma_1}$ and $\pi_2^*
  \ol{\U}_{\Gamma_2}$.  Then we require that $J_\Gamma$ is equal to
  the pullback of $J_{\Gamma_k}$ on $\pi_k^* \ol{\U}_{\Gamma_k}$.
\end{enumerate} 
\end{definition}  

\subsection{Perturbed Floer trajectories} \label{sect_adap}

We now define Floer trajectories satisfying a given perturbation
datum.  If a treed strip is {\em stable} there is a {\em unique}
identification of the curve with a fiber of the universal strip.  More
generally, if $C$ is a possible {\em unstable} treed strip of type
$\Gamma$ with at least one interior marking then one obtains a map
$\pi_C: C \to \ol{\U}_\Gamma$ by identifying the stabilization with a
fiber.  In particular, if $J_\Gamma$ is a perturbation datum then
$\pi_C^* J_\Gamma: C \to \J_\tau(X,\omega)$ is a domain-dependent
almost complex structure taming the symplectic form.

\begin{definition} {\rm (Perturbed Floer trajectories)} A perturbed
  stable Floer trajectory of type $\Gamma$ for a perturbation datum
  $J_\Gamma$ is a treed strip $C$ of type $\Gamma$ together with a map
  $u: C \to X$ which is holomorphic with respect to
  $\pi_C^* J_\Gamma$.
\end{definition} 

The regularity properties of the domain-dependent almost complex
structures are sufficient for the following version of Gromov-Floer
compactness to hold:

\begin{theorem} \label{preconv}  
Suppose that $\Gamma$ is a type of stable treed strip and
$J_{\Gamma,\nu}: \ol{\U}_\Gamma \to \J_{\tau}(X,\omega)$ is a sequence
of domain-dependent almost complex structures of class $C^l, l \ge 2$
converging to a limit $J_\Gamma$ , $C_\nu \subset \ol{\U}_\Gamma$ is a
sequence of stable treed strips of type $\Gamma$, and $u_\nu: C_\nu
\to X$ is a sequence of stable Floer trajectories with bounded energy.
Then after passing to a subsequence, $u_\nu: C_\nu \to X$ converges to
a limiting stable Floer trajectory $u: \hat{C} \to X$.
\end{theorem}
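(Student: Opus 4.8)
The plan is to deduce this from the standard Gromov--Floer compactness package (as in McDuff--Salamon \cite[Chapters 4--5]{ms:jh}), keeping track of the two features special to this setup: the domain is a treed strip moving in the precompact moduli space $\ol{\M}_\Gamma$, and the almost complex structure is domain-dependent and only of class $C^l$, $l \ge 2$. First I would pass to a subsequence so that the underlying domains converge: by compactness of $\ol{\M}_n$ (Deligne--Mumford--Knudsen compactness of the underlying marked curves together with compactness of $[0,\infty]$ for each finite edge-length) the treed strips $C_\nu$ converge to a stable treed strip $C_\infty$ whose combinatorial type $\Gamma_\infty$ is obtained from $\Gamma$ by colliding markings, making edges infinite or zero, and so on. Then $C_\infty$ is a fiber of $\ol{\U}_{\Gamma_\infty}$, and on compact subsets of $C_\infty$ away from its nodes and strip-like ends one may regard the $C_\nu$ as converging smoothly, so that the domain-dependent structures $\pi_{C_\nu}^* J_{\Gamma,\nu}$ converge in $C^l_{\loc}$ there.

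Next comes the bubbling analysis on this ``thick'' part. The bounded-energy hypothesis and the $\eps$-regularity (mean-value) inequality for $J_{\Gamma,\nu}$-holomorphic maps --- available because the $J_{\Gamma,\nu}$ are uniformly tamed and $C^1$-bounded and the Hamiltonian term is fixed --- give the usual dichotomy: either $|\d u_\nu|$ is locally bounded, or energy concentrates at finitely many points. Where $|\d u_\nu|$ is bounded, the perturbed Cauchy--Riemann equation (unperturbed on the disk and sphere components) has $C^l$ coefficients, so elliptic bootstrapping gives uniform $C^l_{\loc}$ estimates and, after a further subsequence, $C^{l-1}_{\loc}$ convergence to a solution of $\olp_{J_{\Gamma},H} u_\infty = 0$. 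At a concentration point I would rescale the domain near that point; because $J_\Gamma$ is continuous on $\ol{\U}_\Gamma$ and, by Definition \ref{pertdef}, locally constant near the nodes and markings and along the boundary, the rescaled almost complex structures converge to a single $J_D \in \J_\tau(X,D)$. Hence each bubble is an honest $J_D$-holomorphic sphere on $\C$ (or a $J_D$-holomorphic disk with boundary on the relevant $L_b$, at a boundary concentration point), which extends over the puncture by removal of singularities; the uniform energy quantization $E \ge \hbar > 0$ for such non-constant spheres and disks --- uniform since the almost complex structures and boundary conditions involved range over compact families --- bounds the number of concentration points in terms of the total energy.

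It remains to organize the bubbles into a connected tree with no energy lost in the connecting necks. Here I would run the standard soft-rescaling induction, controlling the annular necks with the monotonicity/isoperimetric inequality (McDuff--Salamon \cite[Section 4.7]{ms:jh}), so that the energy of the resulting bubble tree equals $\lim E_H(u_\nu)$ and the limit map is continuous across the new nodes. Along the strip-like ends finite energy forces exponential decay to perturbed intersection points (the equation there is the ordinary Floer equation for the fixed pair $(J_D,H)$), so the limits $x_\pm$ exist and the limit is a genuine Floer trajectory; the tree-edge segments converge as in Morse/pearl compactness, with edges of infinite length producing breaking and edges of length zero producing boundary nodes. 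Assembling these pieces, $u_\nu$ converges in the Gromov--Floer topology to a stable Floer trajectory $u: \hat C \to X$, where $\hat C$ is a treed strip whose combinatorial type degenerates $\Gamma$; stability of the limit is automatic, since any ghost component carries the special points created by bubbling or breaking.

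I expect the main obstacle to be the bookkeeping of the simultaneous domain and map degeneration in the treed, domain-dependent setting --- in particular verifying that after rescaling at a concentration point the domain-dependent structure genuinely converges to a domain-\emph{in}dependent one (this is exactly where the ``constant near the nodes and markings'' and ``constant at the boundary'' axioms of Definition \ref{pertdef} are used, so that a bubble forming precisely where a node degenerates already sees a fixed almost complex structure), and matching the combinatorial type of $\hat C$ with a legitimate degeneration of $\Gamma$ together with the attached bubble trees. The analytic core --- $\eps$-regularity, bootstrapping, energy quantization, the no-neck estimate --- is by now routine and uses only the mild regularity $l \ge 2$ of the perturbation data.
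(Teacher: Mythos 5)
Your proof takes essentially the same route as the paper's own sketch: pass to a convergent subsequence of domains in the compact $\ol{\M}_\Gamma$, use the (Constant near the nodes and markings) axiom of Definition~\ref{pertdef} so that after rescaling the bubbles see the fixed almost complex structure $J_D$, and then run the standard Gromov--Floer bubbling/no-neck/soft-rescaling package (the paper cites Ziltener's thesis, you cite McDuff--Salamon, but it is the same machinery). The only slight imprecision is your closing remark that stability of the limit follows because ghost components ``carry the special points created by bubbling or breaking'' --- what one actually uses is that in the Gromov--Floer limit any unstable ghost component (a constant sphere or disk with fewer than the required special points, which here are nodes, not markings) is discarded during the construction of the limit; but this is a standard point and does not affect the substance of the argument.
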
 

\begin{proof}[Sketch of proof]
Since $\ol{\M}_\Gamma$ is compact, after passing to a subsequence we
may assume that $[C_\nu]$ converges to a limit $[C] \in
\ol{\M}_\Gamma$.  We may view the development of nodes as stretching
of the neck.  On each compact subset of $C$ disjoint from the nodes,
the almost complex structure $J_\nu |_{C_{\Gamma,\nu}}$ converges to
$J_\Gamma |_{C}$ uniformly in all derivatives.  On the other hand, on
the neck regions $J_\nu$ is equal to $J_D$, by the (Constant near the
nodes and markings) axiom.  Hence the standard compactness arguments
(finitely many bubbles, soft rescaling, bubbles connect) in
e.g. \cite{zilt:dipl} allow the construction of a limiting map
$u: \hat{C} \to X$, where $\hat{C}$ is obtained from $C$ by adding a
finite collection of bubble trees of sphere and disk components.
\end{proof} 

It follows from the Theorem that the spaces of perturbed stable Floer
trajectories in general admits a compactification involving domains
that may be unstable.  Since our perturbation maps have constant
values over the unstable components, we are not in a position to
achieve transversality over the compactification unless we can avoid
such instability of the limit domains. The choice of (either strictly
or weakly) stabilizing divisors containing no non-constant holomorphic
spheres and the restriction to ``adapted'' nodal Floer trajectories on
which the interior markings keep track of the intersections with the
stabilizing divisors will ensure that domain stability is preserved
under taking limits. Their precise definition is the following:

\begin{definition} {\rm (Adapted Floer trajectories)}   
Let $D$ be a divisor and $J_\Gamma$ a perturbation datum adapted to
$D$.  Let $u: C \to X$ be a nodal Floer trajectory which is $\pi_C^*
J_\Gamma$-holomorphic.  The trajectory $u: C \to X$ is {\em adapted}
to $D$, or $D$-adapted, if
\begin{enumerate} 
\item[] {\rm (Stable domain property)} $C$ is a stable marked disk; and
%
%
\vskip .1in
\item[] {\rm (Marking property)} Each interior marking lies in
  $u^{-1}(D)$ and each component of $u^{-1}(D)$ contains an interior
  marking.
\end{enumerate} 
\end{definition} 

We introduce the following moduli spaces.  Let $\ol{\M}_{n}(\ul{L},D)$
be the set of isomorphism classes of stable $D$-adapted Floer
trajectories to $X$ with $n$ interior markings.  The space
$\ol{\M}_{n}(\ul{L},D)$ admits a topology defined by Gromov-Floer
convergence, whose definition is similar to that for pseudoholomorphic
curves in McDuff-Salamon \cite[Section 5.6]{ms:jh}.  Despite the
notation, it is compact only for generic choices of domain-dependent
almost complex structure, see Theorem \ref{compthm} below.  Because
the constraint that the marking map to a divisor is codimension two,
the formal dimensions of the moduli spaces are independent of the
number of markings.  We define a stratification of the moduli space of
adapted trajectories as follows.  The vertices of the graph are
labelled as follows.  Denote by
\begin{enumerate} 
\item $\Pi(X)$ the space of homotopy classes of maps from the
  two-sphere $S^2$ to $X$;
\item $\Pi(X,L_b)$ the space of homotopy classes
  of maps from the disk $(B^2,S^1)$ to $(X,L_b)$, $b \in \{ 0,1 \}$;
\item $\Pi(X,L_0,L_1)$ the space of homotopy classes of maps from the
  square $[-1,1] \times [0,1]$ mapping $[-1,1] \times \{ b \} $ to
  $L_b$ for $b \in \{ \ 0, 1 \}$ and $\{ \pm 1 \} \times [0,1]$ to
  $x_\pm \in \cI(L_0,L_1)$.
\end{enumerate} 
Each stable trajectory $u: C \to X$ gives rise to a labelling
$$\Ver(\Gamma) \to \Pi(X,L_0,L_1) \sqcup \Pi(X,L_0) \sqcup \Pi(X,L_1)
\sqcup \Pi(X) $$ 
giving the homotopy class of $u$ restricted to the corresponding
component of $C$.  Let $\Edge_c(\Gamma) \subset \Edge(\Gamma)$ denote
the set of {\em contact edges} corresponding to markings or nodes that
map to the divisor $D$.  Given an edge $e \in \Edge_c(\Gamma)$
corresponding to a marking $w(e)$ let $\mu(e)$ denote the intersection
multiplicity of $u$ at $w(e)$, that is, the order of tangency.  For an
edge $e \in \Edge_c(\Gamma)$ corresponding to a node $w(e)$, let
$\mu(e) = (\mu_+(e), \mu_-(e))$ denote the intersection degrees on
either side of the node.  The {\em combinatorial type} of a stable
adapted Floer trajectory $u: C \to X$ is the combinatorial type
$\Gamma$ of the domain curve $C$ equipped with a labelling of
$\Ver(\Gamma)$ by homotopy classes
$\Pi(X,L_0,L_1) \sqcup \Pi(X,L_0) \sqcup \Pi(X,L_1) \sqcup \Pi(X) $,
and a labelling of contact edges $\Edge_c(\Gamma)$ by the intersection
degrees $\mu(e)$.

\begin{definition}  
For any connected type $\Gamma$ we
denote by $\M_\Gamma(\ul{L},D)$ the space of adapted stable Floer
trajectories of type $\Gamma$.  If $\Gamma$ is disconnected then
${\M}_\Gamma(\ul{L},D)$ is the product of the moduli spaces for the
connected components of $\Gamma$.
\end{definition}

The Behrend-Manin morphisms in Definition \ref{bmgraphs} naturally
extend to graphs labelled by homotopy classes.  For example, to say
that $\Gamma$ is obtained from $\Gamma'$ by (Collapsing an edge) means
that if $v',v''$ are identified then the homotopy classes of the
components satisfy $d(v) = d(v')+d(v'')$ using the natural additive
structure on homotopy classes of maps, and the labelling of edges of
$\Gamma'$ by intersection multiplicities, intersection degree and
tangencies is induced from that of $\Gamma$.  There is a partial order
on combinatorial types of stable trajectories generated by relations
$\Gamma < \Gamma'$ if $\Gamma$ is obtained from $\Gamma'$ by
collapsing edges or making edges finite or non-zero.  Whenever
$\Gamma < \Gamma'$ we have an inclusion of strata
$\M_\Gamma(\ul{L},D) \subset \ol{\M}_{\Gamma'}(\ul{L},D)$.

The following is an immediate consequence of the definition of
coherent family of domain-dependent almost complex structures in
Definition \ref{coherent}:

\begin{proposition} \label{bmmaps} {\rm (Behrend-Manin maps for Floer trajectories)} 
Suppose that $\ul{J} = (J_\Gamma)$ is a coherent family of
perturbation data.  Then
\begin{enumerate} 
\item {\rm (Cutting edges)} If $\Gamma'$ is obtained from $\Gamma$ by
  cutting a edge connecting strip components, then there is an embedding
  ${\M}_{\Gamma}(\ul{L},D) \to {\M}_{\Gamma'}(\ul{L},D)$
  whose image is the space of stable marked trajectories of type
  $\Gamma'$ whose values at the markings corresponding to the cut
  edges agree.
\item {\rm (Collapsing edges/making edges finite or non-zero)} If
  $\Gamma'$ is obtained from $\Gamma$ by collapsing an edge or making
  an edge finite or non-zero, then there is an embedding of moduli
  spaces ${\M}_{\Gamma}(\ul{L},D) \to \ol{\M}_{\Gamma'}(\ul{L},D)$.
\item[(d)] {\rm (Products)} if $\Gamma$ is the disjoint union of
  $\Gamma_1$ and $\Gamma_2$ then $\M_\Gamma(\ul{L},D)$ is the
  product of $\M_{\Gamma_1}(\ul{L},D)$ and
  $\M_{\Gamma_2}(\ul{L},D)$.
\end{enumerate} 
\end{proposition}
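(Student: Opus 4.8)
The plan is to deduce all three parts mechanically from the Behrend-Manin maps of domain moduli spaces and universal treed strips constructed in Section \ref{bm}, together with the coherence axioms of Definition \ref{coherent}. The unifying observation, which I would record first, is that each of the three operations is covered by a map of universal strips under which the domain-dependent almost complex structure $\pi_C^* J_\Gamma$ on a treed strip $C$ of type $\Gamma$ is carried to $\pi_C^* J_{\Gamma'}$ on the image: for cutting this is the pushforward relation of the (Cutting edges axiom), for collapsing an edge and for making an edge finite or non-zero it is the pullback relation of the (Collapsing edges/make an edge finite or non-zero axiom), and for a disjoint union it is the fibrewise pullback of the (Product axiom). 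Granting this, a $\pi_C^* J_\Gamma$-holomorphic map $u : C \to X$ of type $\Gamma$ is, verbatim, a $\pi_C^* J_{\Gamma'}$-holomorphic map once $C$ is regarded via the domain map as an element of $\M_{\Gamma'}(\ul{L},D)$, respectively of its closure $\ol{\M}_{\Gamma'}(\ul{L},D)$. I would then check that the three adaptedness conditions survive: the map $u$ itself is literally unchanged, so the non-constant sphere components and the set $u^{-1}(D)$ together with its interior markings are unchanged, while stability of the domain is preserved because the Behrend-Manin maps act on moduli of \emph{stable} curves. Injectivity of the resulting maps on trajectory spaces is then immediate, since each is the identity on the underlying map $u$ modulo the injective identification of domains.

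For part (a) I would argue as follows. Cutting the strip-like node $w$ replaces $\Gamma$ by a disconnected type $\Gamma' = \Gamma'_- \sqcup \Gamma'_+$, and the nodal strip $C$ of type $\Gamma$ is the gluing $C'_- \cup_w C'_+$ of the two marked strips obtained by splitting $w$ into a pair of outgoing/incoming markings. The covering map $\ol{\U}_{\Gamma'} \to \ol{\U}_\Gamma$ identifies these two markings with $w$, and since $J_{\Gamma'}$ equals $J_D$ at each of them by the (Constant near the nodes and markings) axiom, the pushforward $J_\Gamma$ is well-defined. Restriction then takes a type-$\Gamma$ trajectory $u$ to a pair of type-$\Gamma'$ trajectories whose asymptotic limits at the two new markings coincide --- both equal the limit of $u$ along the strip-like end at $w$, which lies in $\cI(L_0,L_1)$ by finiteness of the energy and non-degeneracy. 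For the reverse inclusion I would glue a pair of type-$\Gamma'$ trajectories with matching asymptotic limits into a single continuous map on $C$, which is then a nodal Floer trajectory of type $\Gamma$ and is $\pi_C^* J_\Gamma$-holomorphic by the pushforward relation. This identifies $\M_\Gamma(\ul{L},D)$ with the locus in $\M_{\Gamma'}(\ul{L},D)$ where the values at the cut markings agree.

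For part (b), collapsing an edge of length zero or making an edge finite or non-zero, I would invoke that $\ol{\M}_\Gamma$ embeds as a codimension-one corner of $\ol{\M}_{\Gamma'}$ with $\ol{\U}_\Gamma$ the restriction of $\ol{\U}_{\Gamma'}$ over that corner; by the pullback relation a type-$\Gamma$ trajectory is exactly a type-$\Gamma'$ trajectory lying in this boundary stratum, which gives the embedding $\M_\Gamma(\ul{L},D) \to \ol{\M}_{\Gamma'}(\ul{L},D)$. For part (d), if $\Gamma = \Gamma_1 \sqcup \Gamma_2$ then $\ol{\M}_\Gamma = \ol{\M}_{\Gamma_1} \times \ol{\M}_{\Gamma_2}$ and $\ol{\U}_\Gamma = \pi_1^* \ol{\U}_{\Gamma_1} \sqcup \pi_2^* \ol{\U}_{\Gamma_2}$, and the (Product axiom) makes $J_\Gamma$ the pullback of $J_{\Gamma_i}$ on the $i$-th piece, so a type-$\Gamma$ trajectory is precisely a pair of trajectories of types $\Gamma_1$ and $\Gamma_2$; this reproduces the defining convention for $\M_\Gamma(\ul{L},D)$ on disconnected types.

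Most of the work is bookkeeping --- chiefly checking that the tautological maps $\pi_C : C \to \ol{\U}_\Gamma$ intertwine the Behrend-Manin maps of universal strips, so that the coherence axioms translate into the statements above, including at boundary strata where $\pi_C$ is defined through the stabilization. The one point with genuine analytic content is the reverse inclusion in part (a): one must verify that a pair of trajectories on $C'_\pm$ with matching asymptotic limits really does assemble into a legitimate nodal Floer trajectory across $w$. I expect this to be the main obstacle, and it is precisely where the (Constant near the nodes and markings) axiom is essential --- it makes both pieces $J_D$-holomorphic in a neighborhood of $w$, so that the concatenated map is $J_D$-holomorphic on a punctured neighborhood with a removable singularity and the standard elliptic regularity argument applies.
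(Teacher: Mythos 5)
Your proposal is, in substance, the argument the paper treats as immediate: each Behrend--Manin operation is covered by a map of universal strips, the corresponding coherence axiom identifies $\pi_C^* J_\Gamma$ with the appropriate pushforward or pullback of $\pi_C^* J_{\Gamma'}$, and holomorphicity is a local condition that is therefore transported verbatim. Your observation that part (d) simply restates the defining convention for $\M_\Gamma(\ul{L},D)$ on disconnected types is also exactly right, as is the point that adaptedness is preserved because the map $u$ itself is unchanged.

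The one place you go astray is the final paragraph. The reverse direction in part (a) is not an analytic gluing and does not require removable singularities. The cut edge is a node $w_i$ connecting strip components, which in the strip-like coordinates $\phi_i : C_i^\times \to \R\times[0,1]$ sits at $\pm\infty$; the matching condition ``values at the markings corresponding to the cut edges agree'' means the two asymptotic limits, which are elements of $\cI(L_0,L_1)$, coincide. A nodal Floer trajectory of type $\Gamma$ is by definition a map that is $(J,H)$-holomorphic on each $C_v - \{w_i, w_{i+1}\}$ with finite energy; it is not required to extend holomorphically across the intermediate node, and indeed in the perturbed setting the limit is an orbit, not a point where one could pose a removable-singularity problem. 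Thus the reassembly in (a) is purely tautological bookkeeping: a pair of type-$\Gamma'$ trajectories with matching asymptotics already \emph{is} a type-$\Gamma$ trajectory, with no elliptic regularity step. The genuine analytic gluing of broken trajectories (producing one-parameter families) belongs to the tubular-neighborhood part of Theorem~\ref{main}, not to this Proposition. So the proof is correct; the ``main obstacle'' you flag is not actually there.
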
 

For the (Cutting edges) morphism, if $\Gamma'$ is obtained by cutting
a strip-connecting edge, then
$\ol{\M}_{\Gamma'}(\ul{L},D) \cong \ol{\M}_{\Gamma_1}(\ul{L},D) \times
\ol{\M}_{\Gamma_2}(\ul{L},D)$.  We denote by
\begin{equation} \label{concat}  \sharp: \ol{\M}_{\Gamma_1}(\ul{L},D) \times_{\cI(L_0,L_1)} \ol{\M}_{\Gamma_2}(\ul{L},D) \to
\ol{\M}_\Gamma(\ul{L},D) \end{equation}
the map obtained by combining this isomorphism with the Behrend-Manin
map and call it {\em concatenation of Floer trajectories}.  

We introduce the following notations for numerical invariants
associated to a combinatorial type.  The {\em index} $i(\Gamma)$ of a
type $\Gamma$ is the Fredholm index of the operators associated with
trajectories of ${\M}_{\Gamma}(\ul{L},D)$ given by linearizing
\eqref{here0}; this is determined by the homotopy classes of the maps
which are part of the definition of the combinatorial type $\Gamma$.
The strata contributing to the definition of the Floer coboundary
operator are those with index one (so that the expected dimension of
the top stratum is zero) while the proof that the coboundary squares
to zero involves those of index two.

We discuss several variations on the definition.  

\begin{remark} {\rm (Boundary divisors)} \label{marginal} In the first
  variation, we use different divisors to stabilize the disk bubbles.
  Recall from \cite[Lemma 8.3]{cm:trans} that for a constant
  $\eps > 0$, two divisors $D,D'$ intersect {\em $\eps$-transversally}
  if at each intersection point $x \in D \cap D'$ their tangent spaces
  $T_x D, T_x D'$ intersect with angle at least $\eps$.
  Cieliebak-Mohnke \cite[Theorem 8.1]{cm:trans} shows that there
  exists an $\epsilon>0$ such that a pair of divisors of sufficiently
  high degree constructed in the last section can be made
  $\epsilon$-transverse. Moreover, for any $\theta>0$,
  $\omega$-compatible almost complex structures $\theta$-close to $J$
  making a pair of $\epsilon$-transverse divisors almost complex exist
  (provided that the degrees are sufficiently large).

  Now suppose that $D_b$ is stabilizing for $L_b$,
  $b\in \{ 0, 1\}$, stabilizing for $L_0\cup L_1$ and assume that
  $D_b$ is $\epsilon$-transverse to $D$, $b\in \{ 0, 1\}$.  Assume
  that there exists a compatible almost-complex structure
  $J \in \J_\tau(X,\omega)$ preserving the tangent spaces to $D$ and
  $D_b$, $b\in \{ 0, 1\}$.  For $b \in \{ 0, 1 \}$ let
$$ C_b = \overline{f^{-1}(b)\backslash (\partial C)_b}
\subset C$$ 
be the union of disk and sphere components of $ f^{-1}(b)$ and for
$t\in [0,\infty]$.  Let $\ell_b^{-1}(t) \subset C_b$ be made of the
disk and sphere components of $f^{-1}(b)$ connected to a strip
component through a sequence of edges of total length equal to
$t$. Let $z_i, i =1,\ldots, n$ be the same markings as before, choose
extra (interior) markings $\ z_i^b, i = 1,\ldots, n_b$ on $C_b$,
$b \in \{ 0, 1 \}$, and set $\ul{n}=(n,n_0,n_1)$.  Choose $(J_\Gamma)$
a coherent family of perturbation data that is, as in the previous
section, depending on the position relative to the $z_i$ markings on
the strata with $C_0 \cup C_1 =\emptyset$.  Suppose that $J_\Gamma$ is
equal to $J$ near the nodes, boundary markings and boundary
components, and suppose that its restriction to $D$ is equal to
$J_D|_D$.  We may assume that $(J_\Gamma)$ depends only on the $z_i^b$
markings over the components in $\ell_b^{-1}(\infty)$,
$b \in \{ 0, 1 \}$, that is, so that $J_\Gamma$ is compatible with the
forgetful morphism on $\ell_b^{-1}(\infty)$ forgetting all but the
markings $z_i^b$.  The coherence condition on $(J_\Gamma)$ implies
that each $J_\Gamma$
is given as a product on the product strata associated with edges of
infinite length, and the latter condition means that the perturbation
on the components of $\ell_b^{-1}(\infty)$ will now be independent of
the $z_i$ markings, depending on the $z_i^b$ markings instead. On
components with $\ell_b^{-1}(]0,\infty[)\neq \emptyset$, the
    perturbation is allowed to depend on both the $z_i$ markings and
    the $z_i^b$ markings.

    Let $u: C \to X$ be a stable $\ul{n}$-marked $J$-holomorphic Floer
    trajectory $u: C \to X$ for $\ul{L} = (L_0,L_1)$ with markings
    $z_i, i =1,\ldots, n , \ z_i^b, i = 1,\ldots, n_b$.  The
    trajectory $u: C \to X$ is adapted to $\ul{D} = (D,D_0,D_1)$ if
 in 
    addition to the conditions above describing the intersections with 
    the divisor $D$, for the divisors $D_b, b \in \{ 0,1 \}$ the
    following holds:
\begin{enumerate} 
%
\item[] {\rm (Marking property)} For each $b \in \{ 0,1 \}$, each
  interior marking $z_i^b$ lies in $u^{-1}(D_b) \cap C_b$ and each
  component of $u^{-1}(D_b) \cap C_b$ contains a marking $z_i^b$.
\end{enumerate}

Note that the intersection loci $u^{-1}(D_b)$ in $C\backslash C_b$ are
not required to contain markings, $b \in \{ 0,1 \}$.  On the disks and
spheres at distance $0$ from a strip component, the location of these
intersections do not influence the perturbed Floer equation, but as
their distance goes to infinity, they become the only intersections
influencing the perturbation on their components. This choice of
perturbation on disk bubbles is very similar to the one used to prove
invariance in \cite{cm:trans}.

The combinatorial type $\Gamma$ now includes for each
$b \in \{ 0, 1 \}$ a subset
$\Edge_{\infty,b}(\Gamma) \subset \Edge_{\infty}(\Gamma)$ of
semi-infinite edges (on $C_b$) corresponding to markings that are
required to map to $D_b$.  If $\Gamma$ has edges of length $0$, take
$\Gamma'$ to be the type obtained by forgetting the edges of
$\Edge_{\infty,b}(\Gamma)$ that are on components at distance $0$ from
a strip component. One gets a morphism of combinatorial types, a map
$\ol{\M}_{\Gamma} \to \ol{\M}_{\Gamma'}$ that lifts to a map of Floer
trajectories
$\ol{\M}_{\Gamma}(\ul{L},\ul{D}) \to
\ol{\M}_{\Gamma'}(\ul{L},\ul{D})$.
Two stable Floer trajectories are {\em equivalent} if they are
isomorphic or they are related by such a forgetful morphism.

We denote by $\ol{\M}(\ul{L},\ul{D})$ the moduli space of {\em
  equivalence} classes of adapted stable Floer trajectories,
\begin{equation} \label{glue}
 \ol{\M}(\ul{L},\ul{D}) = \bigcup_\Gamma \ol{\M}_\Gamma
 (\ul{L},\ul{D})/ \sim 
\end{equation}
where the union is over connected types $\Gamma$ and $\sim$ is the
equivalence relation defined by the above forgetful maps and the
Behrend-Manin maps of Floer trajectories.  Note that, as opposed to
the Behrend-Manin maps of Floer trajectories, the forgetful morphisms
are in general many-to-one.  Indeed, the labels of the forgotten
markings of $\Edge_{\infty,b}(\Gamma)$ can be permuted without
affecting the Floer equation. However, counting $n$-marked Floer
trajectories with a weight $1/n!$, as our perturbation setting
suggests, could ensure that forgetful equivalences still lead to
well-defined rational fundamental classes.
\end{remark} 

\begin{remark}\label{families}  {\rm (Families of divisors)} In this remark let
  $\ul{J}=(J_D,J_{D_0},J_{D_1})$ be a triple of almost complex
  structures where $J_D$ (resp. $J_{D_b}$) preserves $D$ (resp. $D_b$,
  $b\in \{0,1\}$).  Assuming that the divisors $D,D_b$ have the same
  degree and were built from homotopic sections $\tau_{k}|_{L_b}$ and
  $\tau_{k_b}$, $b\in \{0,1\}$, Lemma \ref{auroux} \eqref{uniq} gives
  a family of divisors $D_{t,b}$ in the complement of $L_b$ that are
  $J_{t,b}$-holomorphic for some homotopy of almost-complex structures
  $J_{t,b}$, $t\in [0,1]$, from $J_{0,b} = J_{D_b}$ to
  $J_{1,b} = J_{D}$, $b\in \{0,1\}$.
  Let $(J_\Gamma)$ be a coherent family of perturbation data equal to
  $J_{t,b}$ near the nodes, markings and boundary components over the
  components of $\ell_b^{-1}(-\on{log}(t))$ and such that
  $J_\Gamma|_{D_{t,b}} = J_{t,b}$ over $\ell_b^{-1}(-\on{log}(t))$.

Given such perturbations, adapted trajectories are defined as follows.
Let $u: C \to X$ be a stable $\ul{n}$-marked $(J_\Gamma)$-holomorphic
Floer trajectory $u: C \to X$ for $\ul{L} = (L_0,L_1)$ with markings
$z_i, i =1,\ldots, n , \ z_i^b, i = 1,\ldots, n_b$.  The trajectory
$u: C \to X$ is adapted to $\ul{D} = (D,D_0,D_1)$ if in addition to
the conditions above describing the intersections with the divisor
$D$, for the divisors $D_b, b \in \{ 0,1 \}$ we have
\begin{enumerate} 
%
\item[] {\rm (Marking property)} For each $b \in \{ 0,1 \}$, each
  interior marking $z_i \in \ell_b^{-1}(-\on{log}(t))$ lies in
  $u^{-1}(D_{t,b}) \cap C_b$ and each component of $u^{-1}(D_{t,b})
  \cap C_b$ contains a marking $z_i \in \ell_b^{-1}(-\on{log}(t))$.
\end{enumerate}
\end{remark}

\begin{remark} \label{anti} {\rm (Anti-symplectic involutions)}
  Suppose now that $\iota_b: X \to X$ is an anti-symplectic
  involution, i.e.  $\iota_b^2= \on{Id}$ and
  $\iota_b^*\omega = -\omega$, with fixed locus $L_b$,
  $b \in \{0,1\}$. We may then assume that the anti-symplectic
  involution $\iota_b$ preserves the divisor $D_b$ in the sense that
  $\iota_b(D_b) = b$ for $b \in \{ 0, 1 \}$.  Indeed if $L_b$ is the
  fixed point set of an anti-symplectic involution $\iota_b$ which
  lifts to $\ti{X}$, a result of Gayet \cite[Proposition 7]{gayet:re}
  implies that the divisor $D_b$ may be chosen stable under
  $\iota_b, b \in \{ 0, 1 \}$.  

Perturbations compatible with anti-symplectic involutions may be
chosen as follows.  Let $\Gamma$ be a type corresponding to a (nodal)
disk component in $\ell_b^{-1}(\infty) \subset C_b$.  Reversing the
complex structure on the disk at infinity defines an involution $*:
\ol{\M}_\Gamma \to \ol{\M}_\Gamma$, which lifts to an involution on
the universal moduli space $*: \ol{\U}_\Gamma \to \ol{\U}_\Gamma$.
Identifying the disk components with the complex unit disk with the
boundary marking identified to $-1$, the fixed points of $*$ on
$\M_\Gamma$ are the configurations where the special points on the
disk component all lie on the real locus of the disk, while the fixed
points of $*$ on $\U_\Gamma$ consist of the real locus over any fixed
disk.  In addition to choosing perturbations as in (Families of
divisors) (or (Boundary divisors)), one may then choose the
perturbation datum so that the involution induces an involution on the
moduli space of trajectories with disk bubbles at infinity.  Given a
stratum $\M_\Gamma(\ul{L},\ul{D})$ corresponding to a single
disk component in $\ell_b^{-1}(\infty)$, we require that $J_\Gamma$ is
anti-invariant under the symplectic involution in the sense that
\begin{enumerate}
\item $J_\Gamma \circ * = - \iota_b^* J_\Gamma$ on the 
disk 
component
of $\ell_b^{-1}(\infty)$, and 
\item $J_\Gamma \circ * = J_\Gamma$ over the other components.
\end{enumerate}
Note that if we choose perturbations $J_\Gamma$ in the set
$\J_{\iota_b}(X,\omega)$ of $\iota_b$-anti-invariant almost-complex
structures, that is, almost-complex structures
$J \in \J_\tau(X,\omega)$ such that $\iota_b^*J = -J$, one obtains
that $ J_\Gamma \circ * = -\iota_b^* J_\Gamma = J_\Gamma$.
\end{remark} 

\begin{remark} \label{spheres} {\rm (Perturbations for the diagonal)}  
In the case that one of the Lagrangians is a diagonal an alternative
regularization scheme uses the identification of disks with Lagrangian
boundary with spheres.  That is, suppose that
$$m \in \{ 0, 1 \}, \quad  L_m = \Delta \subset X = Y^- \times Y .$$
Choose a Donaldson hypersurface $D_Y \subset Y$ of sufficiently large
degree.  Given a treed strip $C$, a {\em sphere lifting with markings}
consists of, for each disk component $C_i \subset C$ in the strip, a
holomorphic sphere $\ti{C}_i$ equipped with an anti-holomorphic
involution $\iota_i$ so that $\ti{C}_i/\iota_i = C_i$ as surfaces with
boundary and a collection of markings on $\ti{C}_i$.  Let
$\ti{\U}_\Gamma \to \M_\Gamma$ be the universal bundle whose fiber
over $C$ is the curve obtained by replacing each disk at infinity
distance from the strip with the corresponding sphere.  The map
$\ti{\U}_\Gamma \to \U_\Gamma$ taking the quotient of these spheres by
the involution is a covering on each stratum.  A perturbation datum is
then a domain-dependent almost complex structure
$\ti{J}_\Gamma: \ti{\U}_\Gamma \to \J_\tau(X,\omega)$.  Below we will
assume that $(J_\Gamma)$ is a collection of perturbations satisfying
the conditions in Definition \ref{coherent}, and if in addition for
any component at infinite distance from the strip components, the
perturbations depend only on the spherical markings.

For later use we note that there is also a map of moduli spaces
obtained by replacing each disk at infinity with the sphere obtained
by gluing together two copies of the disk.  Let $\ti{\Gamma}$ denote
the marked sphere obtained in this way and consider corresponding map
of moduli spaces $\M_{\Gamma} \to \M_{\ti{\Gamma}}$.  This map has
positive-dimensional fiber whenever a disk at infinity has at most two
boundary markings, since the difference between dimensions of
automorphisms of the sphere and disk is three.
\end{remark}

\section{Transversality and compactness} 

\subsection{Transversality} 

In this section we achieve transversality of strata of index one or
two by a choice of domain-dependent almost complex structure.  Recall
that a {\em comeager} subset of a topological space is a countable
intersection of open dense sets \cite{royden}, while a {\em Baire
  space} is a space with the property that any comeager subset is
dense.  Any complete metric space is a Baire space.  In particular,
the space of almost complex structures of any class $C^l$ or
$C^\infty$ is a Baire space, since each admits a (non-linear) complete
metric.  We construct comeager sets of perturbation data by induction
on the type of stable trajectory making the strata of index one or two
smooth of expected dimension.  We assume for simplicity of notation
that the Hamiltonian perturbation vanishes; the same discussion goes
through with Hamiltonian perturbations that are supported away from
the divisor.

In preparation for the transversality argument we define a space of
perturbations as follows.  We fix an open neighborhood of the nodes
and markings $\ol{\U}_{\Gamma}^{\thin} \subset \ol{\U}_{\Gamma} $, on
which the perturbation is assumed to vanish.  Suppose that
perturbation data $J_{\Gamma'}$ for all boundary types
$\U_{\Gamma'} \subset \ol{\U}_\Gamma$ have been chosen.  Let
$\cJ_\Gamma^l(X,D)$ denote the space of domain-dependent almost
complex structures on $X$ of class $C^l$ with $l \ge 2$ for a type
$\Gamma$ such that the following conditions hold:
\begin{enumerate} 
\item The restriction of $J_{\Gamma}$ to $\ol{\U}_{\Gamma'}$ is equal
  to $J_{\Gamma'}$, for each boundary type $\Gamma$', that is, type of
  lower-dimensional stratum $\M_{\Gamma'} \subset \ol{\M}_\Gamma$.
  This condition will guarantee that the resulting collection
  satisfies the (Collapsing edges/Making edges finite or non-zero)
  axiom of the coherence condition Definition \ref{coherent}.
\item The restriction of $J_\Gamma$ to $\ol{\U}_{\Gamma}^{\thin}$ is
  equal to $J_D$.
\end{enumerate} 
For any fiber $C \subset \ol{\U}_\Gamma$ we denote by $C^{\thin}$ the
neighborhood of the nodes and markings of $C$ given by the
intersection of $C$ with $\ol{\U}^{\thin}_\Gamma$, and $C^{\thick}$
the complement of $C^{\thin}$.  Let $\cJ_\Gamma(X,D)$ denote the
intersection of the spaces $\cJ_\Gamma^l(X,D)$ for $l \ge 0$.

We wish to achieve transversality by choosing generic perturbations of
the given almost complex structure.  However, it is not possible to
obtain transversality with expected dimension for {\em all} strata
using domain-dependent almost complex structures.  To explain the
problem, we introduce the following terminology.  A {\em maximal ghost
  component} of a stable marked strip is a union of ghost components
that is connected and maximal among such unions.  Consider the case
that a {\em maximal ghost component contains several markings}.  From
any such stable trajectory we can obtain a stable trajectory with a
single marking on each maximal ghost component, by forgetting all but
one such marking.  The forgetful map has positive dimensional fibers
but the two strata have the same expected dimension.  It follows that
the strata with multiple such markings cannot be made regular by this
method. A type $\Gamma$ will be called {\em uncrowded} if each maximal
ghost component contains at most one marking.  Given any crowded type
an uncrowded type may be obtained by forgetting all but one marking on
each maximal ghost component.

\begin{theorem} \label{main} {\rm (Transversality)} Suppose that
  $\Gamma$ is an uncrowded type of stable trajectory of expected
  dimension at most one.  Suppose that regular coherent perturbation
  data for types of stable trajectories $\Gamma'$ with
  $\Gamma' > \Gamma$ are given.  Then there exists a comeager subset
$ {\cJ}^{\reg}_\Gamma(X,D) \subset {\cJ}_\Gamma(X,D) $
of {\em regular perturbation data} for type $\Gamma$ compatible 
with the previously chosen
perturbation data such that if $J_\Gamma \in \cJ_\Gamma^{\reg}(X,D)$
then
\begin{enumerate} 
\item {\rm (Smoothness of each stratum)} the stratum
  ${\M}_{\Gamma}(\ul{L},D)$ is a smooth manifold of expected
  dimension;
\item {\rm (Tubular neighborhoods)} if $\Gamma$ is obtained from
  $\Gamma'$ by collapsing an edge or making an edge finite or non-zero
  then the stratum ${\M}_{\Gamma'}(\ul{L},D)$ has a tubular
  neighborhood in $\ol{\M}_{\Gamma}(\ul{L},D)$; and 
\item {\rm (Orientations)} there exist orientations on
  $\M_\Gamma(\ul{L},D)$ compatible with the Behrend-Manin morphisms
  {\rm (Cutting an edge)} and {\rm (Collapsing an edge/Making an edge finite or non-zero)} in the
  following sense:
\begin{enumerate} 
\item If $\Gamma$ is obtained from $\Gamma'$ by {\rm (Cutting an edge)}
  then the isomorphism $\M_{\Gamma'}(\ul{L},D) \to
  \M_{\Gamma}(\ul{L},D)$ is orientation preserving.
\item If $\Gamma$ is obtained from $\Gamma'$ by {\rm (Collapsing an
  edge)} or {\rm (Making an edge finite or non-zero)} then the
  inclusion $\M_{\Gamma'}(\ul{L},D) \to \M_{\Gamma}(\ul{L},D)$ has
  orientation (using the decomposition
$$ T\M_{\Gamma}(\ul{L},D) |_{\M_{\Gamma'}(\ul{L},D)} \cong \R \oplus  T\M_{\Gamma'}(\ul{L},D)$$
and the outward normal orientation on the first factor) given by a
universal sign depending only on $\Gamma,\Gamma'$.  (In particular,
this condition implies that contributions from opposite boundary
points of the one-dimensional connected components cancel.)
\end{enumerate} 
In particular, any isomorphism class $[u]$ in any zero-dimensional
stratum 
$\ol{\M}_\Gamma(\ul{L},D)_0$ of 
an index two moduli
$\ol{\M}_\Gamma(\ul{L},D)$
is associated a sign $\eps([u]) = \pm 1$ by comparing its 
orientation with the canonical orientation of a point.  

In the case of {\rm (Collapsing an edge)} the inclusion
$\M_{\Gamma'}(\ul{L},D) \to \ol{\M}_{\Gamma}(\ul{L},D)$ is orientation
preserving resp. reversing if $\Gamma$ corresponds to splitting of
Floer trajectories or breaking off a disk bubble in $L_0$
resp. breaking off a disk bubble in $L_1$.
\end{enumerate} 
\end{theorem}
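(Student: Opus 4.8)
\emph{Plan of proof.} The three assertions will be proved together by an induction over the partial order on combinatorial types: fixing an uncrowded type $\Gamma$ with $i(\Gamma)\le 2$ and assuming, as in the hypothesis, that regular coherent perturbation data have been chosen for every $\Gamma'>\Gamma$, one first fixes $J_\Gamma$ on $\ol\U_\Gamma^\thin$ and on the boundary types, so that only its values on the interior of $C^\thick$ over $\M_\Gamma$ remain free; the space of such choices is $\cJ_\Gamma^l(X,D)$, a Baire space. The three parts are then: smoothness by a Sard--Smale argument for the universal moduli space; tubular neighborhoods by gluing; orientations by determinant line bundles.

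For part (a) I would introduce the universal moduli space
$$ \M_\Gamma^{\univ}(\ul L,D)=\bigl\{(u,J_\Gamma)\ :\ J_\Gamma\in\cJ_\Gamma^l(X,D),\ \olp_{\pi_C^* J_\Gamma}u=0,\ u\ \text{a}\ D\text{-adapted trajectory of type}\ \Gamma\bigr\} $$
and show it is a $C^{l-1}$ Banach manifold by proving that the universal linearization $(\xi,Y)\mapsto D_u\xi+\tfrac12\,Y\circ du\circ j$ is surjective at every point. The key geometric input is the adapted condition: since no non-constant component of $u$ lies in $D$ and $u^{-1}(D)$ consists of the finitely many marked points, which lie in $C^\thin$, each non-constant component carries a dense open set of injective points in $C^\thick$ mapping to $X-D$; cutting off a suitable variation $Y$ preserving $D$ near such a point and invoking unique continuation for the formal adjoint $D_u^\dual$ kills any would-be cokernel vector, exactly as in Cieliebak--Mohnke and Floer--Hofer--Salamon. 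Ghost components impose no constraint since $u$ is locally constant there, and the uncrowded hypothesis guarantees that the codimension-two marking constraints match the index bookkeeping rather than being absorbed by a positive-dimensional forgetful fibre. Energy bounds together with Theorem~\ref{preconv} make the set of relevant homotopy classes, hence of types with $i(\Gamma)\le 2$, finite, so the induction terminates. The projection $\M_\Gamma^{\univ}(\ul L,D)\to\cJ_\Gamma^l(X,D)$ is then Fredholm of index $i(\Gamma)$, the Sard--Smale theorem yields a comeager set of regular values, and a Taubes-type argument passes to the intersection $\cJ_\Gamma(X,D)$; over a regular $J_\Gamma$ the fibre $\M_\Gamma(\ul L,D)$ is a smooth manifold of the expected dimension.

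For part (b), Proposition~\ref{bmmaps} together with the normal bundle description accompanying the Behrend--Manin maps of moduli spaces identifies the normal bundle of $\M_{\Gamma'}(\ul L,D)$ in $\ol\M_\Gamma(\ul L,D)$ with the gluing space $\cG_\Gamma(C)/\cG_{\Gamma'}(C)$ in the collapsing case, and with a copy of $[0,\infty)$ in the making-finite-or-non-zero case. I would build a gluing map on a neighborhood of the zero section: using the canonical local coordinate schemes near the nodes, preglue to obtain an approximately $J_\Gamma$-holomorphic adapted map $u_\delta$ over the glued treed domain, then correct it by a Newton--Picard iteration whose uniformly bounded right inverse exists because $\M_{\Gamma'}$ is regular and $J_\Gamma$ is constant near the nodes, so no bubbling occurs in the neck. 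Injectivity and surjectivity of the resulting map onto a neighborhood follow from the standard quadratic estimates and the compactness in Theorem~\ref{preconv}; the only modification is at boundary nodes, where both the gluing parameter and the edge length $\ell(e)$ are $[0,\infty)$-valued and the universal strip carries a real blow-up --- these are glued in exactly as ordinary node-smoothing parameters but over the reals.

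For part (c), the orientations come from determinant lines. Each $D_u$ is, up to compact deformation, a complex-linear Cauchy--Riemann operator with totally real boundary conditions twisted by the relative $\Pin^\pm$ structures and gradings, so $\det(D_u)$ carries a canonical orientation; by (a) this line is $\det T\M_\Gamma(\ul L,D)$, which fixes the orientation of each stratum. Naturality under (Cutting an edge) is immediate since the operator is unchanged. For (Collapsing an edge) and (Making an edge finite or non-zero) one uses the linear gluing isomorphism $\det(D_{u_\delta})\cong\det(D_u)\otimes\det\!\bigl(\cG_\Gamma(C)/\cG_{\Gamma'}(C)\bigr)$ and compares it with the outward-normal orientation of the glued parameter; the resulting sign is universal, depending only on $\Gamma,\Gamma'$. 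Specializing to a single disk bubble attached by a length-zero edge, the concrete signs --- $+1$ for splitting Floer trajectories, and $-1$ resp. $+1$ for a disk in $L_0$ resp. $L_1$ --- are read off by combining this with the explicit orientations of the abstract spaces $\ol\M_\Gamma$ from Section~\ref{bm}, in particular the computation there that attaching a disk along $\R\times\{0\}$ resp. $\R\times\{1\}$ by a zero-length edge gives a map that is orientation reversing resp. preserving; the final cancellation clause is then formal. I expect the main obstacle to be part (a): the perturbations are heavily constrained (fixed on boundary types, constant near nodes and markings, constant on boundary arcs, required to preserve $D$), and one must exploit the non-constant-sphere and marking properties of adapted trajectories with some care to place usable injective points inside $C^\thick$; the sign bookkeeping in (c) is delicate but mechanical once the determinant-line formalism is in place.
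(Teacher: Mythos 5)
There is a genuine gap in your part~(a). You set up a single global universal moduli space $\M_\Gamma^{\univ}(\ul L,D)$ over all of $\cJ_\Gamma^l(X,D)$ and assert that it is a $C^{l-1}$ Banach manifold, to which you then apply Sard--Smale once. The paper goes out of its way (Remark~\ref{loss}) to explain that this does \emph{not} work: the transition maps between the local trivializations \eqref{coll} of the universal curve $\ol{\U}_\Gamma$ do not act differentiably on the Sobolev spaces of maps because of loss of derivatives, so the union over trivializations is not obviously a $C^q$ Banach manifold --- the paper explicitly points out that \cite[Proposition~5.7]{cm:trans} stumbles on exactly this. The actual proof constructs a separate Banach space $\B^i_{k,p,l,\Gamma} = \M^i_\Gamma \times \Map^{k,p}_\Gamma(C^\times,X,\ul L,D)\times\cJ_\Gamma^l(X,D)$ for each local trivialization $i$, applies Sard--Smale in each, takes $\cJ^{\reg}_\Gamma = \bigcap_i \cJ^{i,\reg}_\Gamma$, and only afterwards patches the finite-dimensional fibers $\M^i_\Gamma(\ul L,D)$ into a smooth atlas, using elliptic regularity to make the result independent of $q$. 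Your plan skips this entirely; without it the Sard--Smale step is not justified.

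A secondary problem in (a): you argue surjectivity of the universal linearization by locating a ``dense open set of injective points'' of $u$ on each non-constant component. That is the wrong mechanism here --- multiply covered components have no injective points, and avoiding the multiple cover problem is precisely why domain-dependent perturbations are being used. The paper's argument only needs a point $z\in C^{\thick}$ with $\d u(z)\neq 0$; since the infinitesimal variation $\tau$ of the almost complex structure is allowed to depend on $z$ (not only on $u(z)$), it can be localized on the domain so that the resulting cokernel pairing is nonzero regardless of multiplicities, and unique continuation for $D_u^\dual$ then finishes. Your parts (b) and (c) are otherwise essentially the paper's route (parametrized Newton--Picard gluing with uniformly bounded right inverse; determinant lines glued via the normal deformation spaces and compared with the explicit orientations of $\ol{\M}_n$ from Section~\ref{bm}), but note that the concrete signs you quote for the disk-bubble case are flipped relative to the statement of the theorem: a bubble in $L_0$ (attached along $\R\times\{0\}$) gives the orientation-\emph{preserving} inclusion and a bubble in $L_1$ the orientation-\emph{reversing} one, not the other way around. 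You should also say how $\det(D_u)$ is canonically oriented --- on a strip with Lagrangian boundary conditions it is not by itself; the paper caps the two ends by fixed end data $D_{x_\pm}$ and factors $\det(D_u)$ through $\det(D_{x_-})^{-1}\otimes\Lambda^{\top}(T_x L_0)\otimes\det(D_{x_+})$.
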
 

\begin{proof} The proof of the first part of the theorem is an
  argument using the Sard-Smale theorem on universal moduli spaces.
  The universal moduli spaces are constructed using the implicit
  function theorem for Banach manifolds.  For $p \ge 2, k \gg 2/p$ as
  in \eqref{mapcharts} let $\Map^{k,p}_\Gamma(C^\times,X,\ul{L})$ be
  the space of maps $u$ from $C^\times$ to $X$ of Sobolev class
  $W^{k,p}$ with type specified by the labellings of the vertices and
  edges of $\Gamma$ by homotopy classes and tangencies, mapping
  $(\partial C)_b$ to $L_b$ for $b \in \{ 0,1 \}$.  The homotopy class
  of the component $C^\times_v$ corresponding to a vertex
  $v \in \Ver(\Gamma)$ is $d(v)$; for each edge
  $e \in \Edge_{\infty}(\Gamma) \backslash \bigcup_{i\in \{0,1\}}
  \Edge_{\infty,b}(\Gamma)$
  (resp. $\Edge_{\infty,b}(\Gamma)$) the intersection degree $u$ with
  $D$ is $\mu(e)$. In order that these derivatives be well-defined we
  require $k \ge \mu(e) + 2/p + 1$ for each edge
  $e \in \Edge_\infty(\Gamma)$, where $\mu(e)$ is the intersection
  multiplicity of the map with the divisor at the corresponding
  marking.  The conditions on the homology classes $d(v)$ are
  topological, that is, locally constant among maps with a fixed
  domain.  Each of the other conditions defining
  $\Map^{k,p}_\Gamma(C^\times,X,\ul{L})$ corresponds to a $C^q$
  differentiable map from $\Map^{k,p}(C^\times,X,\ul{L})$ for
  $q < k - 2/p - \min_e \mu(e)$ with surjective linearization for
  $q \ge 1$, which we assume.  It follows from the implicit function
  theorem for Banach manifolds that
  $\Map^{k,p}_\Gamma(C^\times,X,\ul{L})$ is a Banach submanifold of
  the space $\Map^{k,p}(C^\times,X,\ul{L})$.

The universal space will incorporate perturbation data.  The space
$\cJ_\Gamma^l(X,D)$ of domain-dependent perturbations of class $C^l$
is a Banach manifold, and similarly for the space $\cJ_\Gamma^l(X,D)$
of domain-dependent perturbations whose value is fixed on the boundary
of $\ol{\U}_\Gamma$ corresponding to lower-dimensional strata.
Indeed, the space of almost complex structures $J_\Gamma$ equal to
$J_D$ near the nodes is a smooth Banach manifold as in
McDuff-Salamon \cite[Proposition 3.2.1]{ms:jh}.  Each of the
conditions fixes $J_\Gamma$ on a subset.  Hence $\cJ_\Gamma^l(X,D)$ is
also a smooth Banach manifold.  We leave it to the reader to show that
$\cJ_\Gamma^l(X,D)$ is non-empty, using the fact that $C^l$ functions
on the boundary of smooth manifolds with corners have extensions over
the interior.

The universal moduli spaces are cut out locally by Fredholm sections
of a Banach vector bundle.  Recall from \eqref{coll} the local
trivializations $\U^i_\Gamma \to \M^i_\Gamma \times C$.  Our local
universal moduli spaces are cut out from the spaces
$$ \B^i_{k,p,l,\Gamma} := {\M}^i_{\Gamma} \times
\Map^{k,p}_{\Gamma}(C^\times,X,\ul{L},D) \times \cJ_\Gamma^l(X,D)
.$$
Consider the map given by the local trivializations \eqref{localtriv}.
Let $C$ be a nodal disk and $u \in \Map^{k,p}(C,X)$.  Let
$\Omega^0(C^\times,u^*(TX,T\ul{L},TD))_{k,p}$ denote the space of
sections of class $W^{k,p}$ on each component that match at the nodes,
mapping the boundary $(\partial C)_b$ to $TL_b$ for $b \in \{ 0,1 \}$
and the markings $z_i$ to $TD$, $1\leq i \leq n$.  Let
$\Omega^{0,1}(C^\times,u^*TX)_{k-1,p}$ denote the space of $0,1$-forms
of class $W^{k-1,p}$ with no matching condition.  In the case of an
intersection with non-vanishing first derivative, define a fiber
bundle $\E^i_{k,p,l,\Gamma}$ over $\B^i_{k,p,l,\Gamma}$ whose fiber at
$m,u,J$ is
\begin{equation}  (\E^i_{k,p,l,\Gamma})_{m, u, J } :=
 \Omega^{0,1}_{j(m),J}(C^\times, u^*TX)_{k-1,p}
 .\end{equation}
Local trivializations of $\E^i_{k,p,l,\Gamma}$ of class $C^q$ are
provided by geodesic exponentiation from $u$ and parallel transport
using the Hermitian connection defined by the almost complex
structure, see for example \cite[p. 48]{ms:jh}.  We may suppose that
the metric on $X$ is chosen so that $L_b$, $b\in \{0,1\}$, and $D$ are
totally geodesic.  The derivatives of the transition maps involve
derivatives of these parallel transports and hence the derivatives of
the almost complex structure.  In order for the $q$-th derivative of
the transition map to preserve $W^{k-1,p}$ one needs $J_\Gamma$ to be
of class $C^l$ for $q < l - k$, and so the transition maps are of
class $C^q$ only for $q$ also satisfying $ q < l - k $, which we now
assume.  The Cauchy-Riemann operator defines a $C^q$ section
\begin{equation} \label{here} 
\olp : \ \B^i_{k,p,l,\Gamma} \to \E^i_{k,p,l,\Gamma}, \quad (m,u, J)
\mapsto \olp_{j(m),J} u \end{equation}
where 
\begin{equation} 
 \label{olp} \olp_{j(m), J} u = \d u^{0,1}  =  \frac{1}{2} ( \d u  + J \circ \d u \circ j(m) ) ,\end{equation} 
and the almost complex structure $J = J_{\Gamma,m,z,u(z)}$ depends on
$(m,z) \in \M^i_\Gamma \times C^\times$.  The section $\olp$ has
Fredholm linearization on Sobolev class $W^{k,p}$ sections by results
of Lockhart-McOwen \cite{loc:ell} on elliptic operators on strip-like
end manifolds which are non-degenerate at infinity, as for
\eqref{here0}.  For $p > 2$ these results depend on work of
Maz'ja-Plamenevskii \cite{mazja:est}, see also the treatments in
Schwarz \cite{sch:coh} and Donaldson \cite[Section 3.4]{don:floer}.
The {\em local universal moduli space} is
$$\M^{\univ,i}_{k,p,l,\Gamma}(\ul{L},D) = \olp^{-1}
(\B^i_{k,p,l,\Gamma}) $$
where $\B^i_{k,p,l,\Gamma}$ is considered as the zero section in
$\E^i_{k,p,l,\Gamma}$.  We will later show that the linearization of
\eqref{olp} is surjective, under some assumptions.  Assuming this, it
follows from the implicit function for Banach manifolds that each
local universal moduli space ${\M}^{\univ,i}_{k,p,l,\Gamma}(\ul{L},D)$
is a Banach manifold of class $C^q$. The forgetful morphism
$$ \varphi_i: \M^{\univ,i}_{k,p,l,\Gamma}(\ul{L},D) \to \cJ_\Gamma^l(X,D)$$
is the restriction of a $C^q$ Fredholm map (the projection times the
Cauchy-Riemann operator) and so also $C^q$ Fredholm.

The Sard-Smale theorem may be applied in each local trivialization to
guarantee the existence of a comeager set of perturbations for which
the moduli space is cut out transversally.  Let
$$\M^{\univ,i}_{k,p,l,\Gamma}(\ul{L},D)_d \subset
\M^{\univ,i}_{k,p,l,\Gamma}(\ul{L},D) $$
denote the subset on which $\varphi_i$ has Fredholm index $d \ge 0$,
and is therefore submersive since the linearization of \eqref{olp} is
assumed to be surjective.  By the Sard-Smale theorem, for $q$ greater
than $d$ the set of regular values $\cJ^{i,l,\reg}_{\Gamma}(X,D)$ in
the image of $\ol{\M}^{\univ,i}_{k,p,l,\Gamma}(\ul{L},D)_d$ in
$\cJ^l_\Gamma(X,D)$ is comeager.  The set $\cJ^{i,\reg}_\Gamma(X,D)$ of
smooth domain-dependent structures is also comeager.  Indeed, for any
fixed bound on the first derivative, the space of $\cJ^{i,\reg, l, <
  B}_\Gamma(X,D)$ that are regular for every stable Floer trajectory
$u: C \to X$ with $\sup | \d u | < B$ is open and dense, as in
Floer-Hofer-Salamon \cite[Proof of Theorem 5.1]{fhs:tr}.  Taking the
intersection over the first derivative bounds $B$ implies that
$\cJ^{i,\reg}_\Gamma(X,D)$ is comeager in $\cJ_\Gamma(X,D)$.  Let
$$ \cJ^{\reg}_\Gamma(X,D) = \cap_{i} \cJ^{i,\reg}_{\Gamma}(X,D) .$$
Fix $J_\Gamma \in \cJ^{\reg}_\Gamma(X,D)$.  The moduli space
$\M^i_\Gamma(\ul{L},D) = \varphi_i^{-1}(J_\Gamma)$ is a
finite-dimensional manifold of class $C^q$.  By elliptic regularity,
every element of ${\M}^i_{\Gamma}(\ul{L},D) $ is smooth and so this
definition agrees with the previous definition and is independent of
$q$.

Finally we patch together the moduli spaces defined using the local
trivializations of the universal disk.  The transition maps for the
local trivializations define smooth maps
$$ {\M}^i_{\Gamma}(\ul{L},D) |_{ {\M}^i_\Gamma \cap {\M}^{i'}_{\Gamma}} \to
{\M}^{i'}_{\Gamma}(\ul{L},D) _{{\M}^i_{\Gamma} \cap {\M}^{i'}_{\Gamma}} .$$
Therefore the space
${\M}_{\Gamma}(\ul{L},D) = \cup_i {\M}^i_{\Gamma}(\ul{L},D) $
has a smooth atlas.  Since the moduli space of stable strips $\M_\Gamma
= \cup_i \M_\Gamma^i$ of type $\Gamma$ is Hausdorff and second
countable and each piece $\M^i_\Gamma(\ul{L},D)$ is Hausdorff and
second countable, the union ${\M}_{\Gamma}(\ul{L},D)$ is
Hausdorff and second countable.  So $\M_\Gamma(\ul{L},D)$ has the
structure of a smooth manifold.

We now prove that the linearization of \eqref{olp} is surjective
provided that there is at most one marking on each maximal ghost
component. Let $\eta$ be a distributional $0,1$-form representing an
element in the cokernel of the linearization of \eqref{here}.  On each
component of $C$, we obtain an element in the kernel of the adjoint on
the complement of the interior markings.  By elliptic regularity
$\eta$ is class $W^{k,p}$ on this complement.  We show $\eta$ vanishes
on each component.

On components without tangencies, the element $\eta$
vanishes near any point $z \in C$ at which the derivative $\d u(z)$ of
the map is non-zero, by an argument similar to that in McDuff-Salamon
\cite[Proposition 3.2.1]{ms:jh}: The linearization of \eqref{olp} with
respect to the almost complex structure on the sphere components is
\begin{equation} \label{iota}
 T_{J_\Gamma} \cJ_\Gamma(X,D) \to \Omega^{0,1}(C^\times, u^*
 TX)_{k-1,p} , \quad \tau \mapsto 
 ( ( \tau | C^\times) \circ \d u
 \circ j)/2 .\end{equation}
At any point $z \in C^{\thick}$ where $\d u (z) \neq 0$ we may find an
infinitesimal almost complex structure $\tau$ such that the right hand
side of \eqref{iota} pairs non-trivially with $\eta(z)$.  It follows
that $\eta$ must vanish in a neighborhood of $z$.  Unique continuation
for solutions to $D_u^* \eta = 0$ implies that $\eta$ vanishes
identically. 

Next we consider components of the two-dimensional part on which the
map is constant.  If $u: C \to X$ is a map that is constant then the
linearized operator is constant on each disk component of $C$ and
surjective by a doubling trick.  However, we also must check that the
matching conditions at the nodes are cut out transversally.  Consider
a ``ghost component'' $C' \subseteq C$ consisting of a union of disks
on which $u$ is constant, attached by nodes, say with boundary in
$L_b$ for $b \in \{ 0, 1 \}$.  Let $C''$ denote the normalization of
$C'$, obtained by replacing each nodal point $w_i$ in $C'$ with a pair
of points $w_i^\pm$ in $C''$.  Since the combinatorial type of the
component is a subgraph of a tree, the combinatorial type must itself
be a tree.  We denote by $T_u L_b$ the tangent space at the constant
value of $u$ on $C'$.  We suppose that there are $m$ ghost components.
Given a choice of a distinguished ``root'' component, each non-root
component has a unique ``outgoing node'' pointing towards the root
component.  Taking the differences of the maps at the nodes defines a
map
\begin{equation} \label{diffs} \delta: \ker(D_u | C'') \cong T_u L_b^m
  \to T_u L_b^m, \quad \xi \mapsto ( \xi(w_i^+) - \xi(w_i^-) )_{i=1}^m
  .\end{equation}
An explicit inverse to $\delta$ is given by defining recursively as
follows.  Consider the orientation on the combinatorial type $\Gamma''
\subset \Gamma$ induced by the choice of outgoing semi-infinite edge
of $\Gamma$.  For $\eta \in T_u^m$ define an element $\xi \in T_u L^k$
by 
$$\xi(h(e)) - \xi(t(e)) = \eta(e) $$
whenever $t(e),h(e)$ are the head and tail of an edge $e$
corresponding to a node.  The element $\xi$ may be defined recursively
starting from any choice of component in the maximal ghost components.
The matching conditions at the nodes connecting $C'$ with the
complement $C - C'$ are also cut out transversally.  Indeed on the
adjacent components the linearized operator restricted to sections
$\xi$ vanishing at each node $w$ connecting $C'$ with $C- C'$ is
already surjective by \cite[Lemma 6.5]{cm:trans}.  This implies that
evaluation map on the universal moduli space
$\xi \mapsto \xi(w_-) - \xi(w_+)$ is transverse.  A similar discussion
for collections of sphere components on which the map is constant
implies that the matching conditions at the spherical nodes are also
cut out transversally, that is, the evaluation map
$\xi \mapsto (\xi(w_-) , \xi(w_+))$ is transverse to the diagonal for
each spherical node.  In the case that the maximal ghost component
$C'$ has a marking $z_k$, the inductive argument starts with a root
component $C_i \subset C'$ containing the marking $z_k$ as the first
step at the first step in the recursion.  Combining these arguments
completes the proof of surjectivity of the linearized operator except
in the case of a constant strip with values in $L_0 \cap L_1$.  Since
the intersection is assumed transversal, the linearized operator on
such a component is also transverse, and a similar argument shows
transversality at nodes connecting to other ghost components.

 To show transversality in the presence of tangencies with divisors we
 require an auxiliary result of Cieliebak-Mohnke \cite[Lemma
   6.5]{cm:trans} that if the Hamiltonian perturbation vanishes, then
 on the universal moduli space, the map taking the jets up to order
 $k- 2/p$ at any marking is surjective onto the space of jets of
 $0,1$-forms of holomorphic functions.  A similar result holds for
 Floer trajectories as well as pseudoholomorphic maps.  In the case of
 a single tangency at $w(e)$, set
$$\E_{k,p,l,\Gamma} = \{ \eta \in \Omega^{0,1}_{j(m),J}(C^\times, u^*
TX)_{k-1,p} \ | \ \eta(w(e)) = 0 \} $$ 
as in \cite[Lemma 6.6]{cm:trans}.  Then the map $\olp$ of \eqref{here}
has surjective linearization and has Fredholm index two less than the
corresponding map for transverse intersection.  This shows that these
strata are of codimension at least two, and so empty.

Next we prove part (b) of Theorem \ref{main}. That is, we show that
each stratum of index one or two corresponding to a broken trajectory
or disk bubble at distance $0$ or $\infty$ has a tubular neighborhood
in any larger stratum whose closure contains it.  First consider the
case of a broken Floer trajectory.  The existence of a tubular
neighborhood is a consequence of a parametrized version of the
standard gluing theorem for Floer trajectories, keeping in mind that
the gluing parameter is not (at least obviously) a local coordinate on
the moduli space.  Let $u = (u_1,u_2)$ be a broken trajectory with
domain $C = (C_1,C_2)$ of type $\Gamma = \Gamma_1 \# \Gamma_2$ with
$n = n_1 + n_2$ markings.  One constructs for any sufficiently small
gluing parameter $\delta$ and {\em approximate} Floer trajectory
$u_\delta : C_\delta \to X $ with domain the stable strip $C_\delta$
given by gluing $C_1,C_2$ using a neck of length $1/\delta$.  Choose a
local trivialization of the universal bundle as in \eqref{localtriv}.
We identify $\M_\Gamma^i$ locally with $T_{[C]} \M_\Gamma^i$ by
choosing coordinates.  For any $m \in \M_\Gamma^i$ let $j(m)_\delta$
denote the complex structure on $C_\delta$ obtained by gluing together
the given complex structures on the components of $C$ given by $j(m)$.
Consider the map
\begin{align} 
 \cF_u^\delta : \ \M_\Gamma \times \Omega^0(C_\delta^\times,
 u_\delta^* (TX, T \ul{L},TD))_{1,p} \to &\Omega^{0,1}(C_\delta^\times,
 u_\delta^* TX)_{0,p}
 \\ (m,\xi)
 \mapsto &\Phi_\xi^{-1} \olp_{j(m)_\delta, J_\Gamma }
 \exp_{u_\delta}(\xi)
 .\end{align}
We show that $(\cF_u^{\delta})^{-1}(0)$ is non-empty and cut out
transversally.  To show that the linearized operator is surjective,
one constructs a right inverse from the right inverses associated with
$u_a, a = 1,2$. Namely consider the maps
\begin{align}  
\cF_{u_a}: \M_{\Gamma_a} \times \Omega^0(C_a^\times, u_a^* (TX, T
\ul{L},TD))_{1,p} \to &\Omega^{0,1}(C_a^\times, u_a^* TX)_{0,p}
\\ (m_a,\xi) \mapsto &\Phi_\xi^{-1} \olp_{j(m_a), J_{\Gamma_a}}
\exp_{u_a}(\xi) .\end{align} 
Since the moduli spaces of index one are regular, one has right
inverses $Q_a$ for the linearized operators $D\cF_{u_a}$ for $a =1,2$.
Using these, one constructs a right inverse $Q_\delta$ for the
linearization $D_0 \cF_u^\delta$.  Then one checks that the following
zeroth-order, first-order, and quadratic estimates hold: For some
constant $\rho > 0$ and $m,\xi$ sufficiently close to $0$, there
exists a monotonically decreasing function $\eps(\delta) \to 0$ such
that for all $\delta \in (0,\delta_0]$
$$ \Vert \cF_u^\delta(0) \Vert_{0,p,\delta} < \eps(\delta), \quad
  \Vert Q^\delta \Vert < \rho, \quad \Vert D_0 \cF_u^\delta -
  D_{(m,\xi)} \cF_u^\delta \Vert < \rho ( \Vert m \Vert + \Vert \xi
  \Vert_{1,p}) $$
where the second norm is the operator norm from $W^{0,p}$ to $W^{1,p}$
on $C_\delta$ and the third norm is the operator norm from $W^{1,p}$
to $W^{0,p}$ on $C_\delta$ as in Ma'u \cite[5.2.1]{mau:gluing},
\cite[5.4]{mau:gluing}, \cite[5.5]{mau:gluing}.  (It seems that
versions of these estimates for $W^{k,p}, k > 1$ are missing from the
literature and so these gluing arguments do not apply to the case that
$\Gamma$ includes non-trivial tangency conditions.) The quantitative
version of the implicit function theorem as in \cite[Appendix
  A.3]{ms:jh} shows that there exists a unique solution
$(m(\delta),\xi(\delta))$ to
$\cF_u^\delta(m(\delta),\xi(\delta)) = 0$ with
$(m(\delta),\xi(\delta)) \in \on{Im}(Q^\delta) $.
Thus the solution space is non-empty. By the implicit function theorem
in its standard form, $(\cF_u^{\delta})^{-1}(0)$ is locally a smooth
manifold modelled on $\ker D_0 \cF_u^{\delta}$.  

Each broken trajectory is the limit of a {\em unique} end of the
one-dimensional component of the moduli space.  Indeed, if $v$ is an
adapted Floer trajectory sufficiently close to a broken trajectory $u$
then $v$ corresponds to an element of $(\cF_u^\delta)^{-1}(0)$ near
$(m(\delta),\xi(\delta))$ for {\em some} $\delta \in (0,\delta_0)$,
see for example \cite[Section 5.7]{mau:gluing}.  Therefore $v$ can be
connected by a small path of Floer trajectories to the trajectory
corresponding to $(m(\delta), \xi(\delta))$.  Thus the gluing map is
locally surjective.  Since $[u]$ is the limit of a unique end, there
exists a neighborhood of $[u]$ in $\ol{\M}(\ul{L},D)$ homeomorphic to
$[0,1)$, with $[u]$ mapping to $0$ under the homeomorphism.  This
argument shows that $[u]$ has a tubular neighborhood in
$\ol{\M}(\ul{L},D)$.  Note that this argument does not show that the
parameter $\delta$ is a coordinate, that is, that the gluing map is
injective.

The case of gluing at a boundary node is treated in Abouzaid
\cite[5.50]{ab:ex} and Biran-Cornea \cite[Section 4]{bc:ql}, for
example, in the domain-independent case.  Let $u: C \to X$ be a map
from a nodal disk to $X$, with value $x \in X$ at a node $w \in C$.
For simplicity we assume that $w$ separates $C$ into components $C_a,
a = 1,2$.  Associated to the punctured surface $C^\times - \{ w \}$
there is a surjective linearized operator defined as follows.  Choose
strip-like coordinates $s_a \in (0,\infty)$ and $t_a \in [0,1]$ near
$z$ in $C_a$.  Consider the Banach manifold $\Map_{1,p,\eps}(C,X)$ of
continuous maps locally of class $W^{1,p}$, with finite weighted norm
$$ \Vert u \Vert_{1,p,\eps} := \left( \int ( |\d u(z) |^p + \dist(
x,u(z))^p )) e^{ \eps p | s|} \d z \right)^{1/p} $$ 
where the integral is defined using a measure $\d z$ constructed using
cylindrical coordinates near the node, that is, $\d z = \d s \d t$
near the node, and $\epsilon >0$.  Let
$$ \B_{1,p,\eps} = \M_\Gamma^i \times \Map_{1,p,\eps}(C,X) .$$
For such $u$ and one-forms $\eta \in \Omega^{0,1}(C, u^* TX)$ there is
a similar norm
$$ \Vert \eta \Vert_{1,p,\eps} := \left( \int_C | \eta(z) |^p e^{\epsilon p | s|}
\d z \right)^{1/p} .$$
Let $(\cE_{1,p,\eps})_u$ denote the space of one-forms with finite
$1,p,\eps$-norm and $\cE_{1,p,\eps}= \cup_{(m,u) \in \B_{1,p,\eps}}
(\cE_{1,p,\eps})_u$.  Then $\cE_{1,p,\eps}$ is a smooth Banach vector
bundle over $\B_{1,p,\eps}$, with a smooth section $ \B_{1,p,\eps} \to
\cE_{1,p,\eps}$ given by the Cauchy-Riemann operator
$\olp_{j(m)_\delta, J_\Gamma }$.  For $\delta \in (0,1)$ the
linearization $D_u$ of $\olp_{j(m)_\delta, J_\Gamma}$ may be
identified with the linearized Cauchy-Riemann operator on the
un-punctured curve $C$ and so is surjective by assumption.

Given a gluing parameter $\delta$ as in \eqref{gluing}, form a glued
curve $C(\delta)$ by removing small balls around the nodes and gluing
together using a map $z \mapsto \delta/z$; the image of $ |z| \in
(\delta^{2/3}, \delta^{1/3})$ is called the {\em neck region}.  The
local coordinates on the small balls induce cylindrical coordinates on
the neck region.  Associated to $u$ is the preglued map $u(\delta)$
defined as follows. Fix a cutoff function $\chi(s)$ such that $\chi(s)
= 1$ for $s \leq -1$ and $\chi(s) = 0$ for $s \ge 1$.  Given $\lambda
> 0$ choose $\delta $ sufficiently small so that $\lambda \ll -
\on{log}(\delta^{1/6})$.  The {\em preglued map} $u(\delta)$ is given
by $u$ away from the neck region, and given by
$$ u(\delta) = \exp_x( \chi_{|2\lambda-1|} \exp_x^{-1} u) $$
on the neck region.  Thus $u(\delta)$ is constant and equal to $x$ on
the part of the neck with coordinates $[-2\lambda,2\lambda] \times
[0,1]$.  Consider a weighted Sobolev space $\Omega^{0,1}(C(\delta),
u(\delta)^* TX)_{0,p,\eps}$ of sections of $ u(\delta)^* TX \otimes
\Lambda^{0,1} T^\dual C(\delta)$ by
$$ \Vert \eta \Vert_{p,\delta,\eps} = \left( \int_{C(\delta)} |\eta|^p e^{
  (2\lambda - |s|) p \eps}  \d z \right)^{1/p}. $$
Similarly define a weighted Sobolev space $\Omega^0(C(\delta),
u(\delta)^* TX)_{1,p,\eps}$ of sections of $u(\delta)^* TX$ by 
$$ \Vert \xi \Vert_{1,p,\eps} = \left( | \xi(0,0) |^p +
\int_{C(\delta)} | \xi - \xi(0,0) |^p e^{ (2\lambda - |s|) p \eps} |^p
\right)^{1/p} \d z. $$
Let $\Phi_\xi$ denote parallel transport along
$\exp_{u(\delta)}(\xi)$.  Then the non-linear map
$$ \cF_\Gamma^i: \ \M_\Gamma^i \times \Omega^0(C(\delta), u(\delta)^*
TX) \to \Omega^{0,1}(C(\delta), u(\delta)^* TX), \quad \xi \mapsto
\Phi_\xi^{-1} \olp_{J,j(m)} $$
cuts out the moduli space locally.  The gluing estimates in
e.g. \cite[Lemma 5.2]{ab:ex} show that for some constant $C$
independent of the gluing parameter $\delta$ and neck length $\lambda$,
$$ \Vert \cF_\Gamma^i(0) \Vert_{p,\eps} \leq C e^{-2(1 - \eps)\lambda} $$
$$ \Vert \cF_\Gamma^i(\xi_1) - \cF_\Gamma^i(\xi_2) - D_0 \cF^i_\Gamma
( \xi_1 - \xi_2) \Vert_{p,\eps} \leq C \Vert \xi_1 + \xi_2
\Vert_{1,p,\eps} \Vert \xi_1 - \xi_2 \Vert_{1,p,\eps} $$
and the linearized operator $D_0 \cF^i_\Gamma$ has a uniformly bounded
right inverse $Q_{u(\delta)}$.  By the quantitative version of the
implicit function theorem, there exists a unique solution to
$\cF_\Gamma^i(\xi) = 0$ with $\xi$ in the image of the right inverse
$Q_{u(\delta)}$.  Thus any trajectory with a boundary node of index
one is a boundary point of a component of the top-dimensional stratum
of trajectories without nodes.  A similar argument to the case of
gluing along strip-like ends shows that each nodal trajectory is a
limit of a {\em unique} end of the one-dimensional stratum.

The existence of systems of orientations in part (c) of Theorem
\ref{main} compatible with the Behrend-Manin morphisms is a special
case of the construction of orientations for Lagrangian Floer theory
in Fukaya-Oh-Ohta-Ono \cite{fooo}, see also \cite{orient} or
\cite{charest:clust}.  The tangent space to $\ol{\M}_n(\ul{L},D)$ at
any element $[u:C \to X]$ is the product of the tangent space $T_{[C]}
\ol{\M}_n$ to $\ol{\M}_n$ with the kernel $\ker(D_u)$ of the
linearized Cauchy-Riemann operator $D_u: \Omega^0(C,u^* (TX,TL)) \to
\Omega^{0,1}(C, u^* TX)$.  The former was oriented in the discussion
following \eqref{orientmod}.  To orient the latter, for each element
$x \in \cI(L_0,L_1)$ one chooses an {\em end datum} consisting of a
Cauchy-Riemann operator $D_x$ on a map from a disk with one removed
marking on the boundary (considered as a surface with a strip-like
end) asymptotic to $x$ at the end with Lagrangian boundary condition
$F_{t,x} \in \Lag( T_x X)$ interpolating between $T_x L_0$ and $T_x
L_1$.  Given a trajectory $u$, a degeneration argument gives an
isomorphism of determinant lines $\det(D_u) \to \det(D_{x_-})^{-1}
\otimes \Lambda^{\on{top}}(T_x L_0) \otimes \det(D_{x_+})$ for the
ends $x_\pm$ of $u$, up to the determinant line of a Cauchy-Riemann
operator on a sphere which is canonically oriented by the almost
complex structure.  A choice of orientations for $\det(D_x), x \in
\cI(L_0,L_1)$ induces orientations on the moduli spaces of Floer
trajectories.  The gluing sign for the kernel of $D_u$ is positive
while the sign for the inclusion $\M_\Gamma \to \ol{\M}_{\Gamma'}$ is
computed in the discussion after \eqref{orientmod}.  This ends the
proof of Theorem \ref{main}.
\end{proof} 

\begin{remark} \label{loss} {\rm (Loss of derivatives issues)} The
  reader is warned that the above constructions do not give a
  universal moduli space over all of $\M_\Gamma$. The reason is that
  the transition maps for the universal disk do not induce
  differentiable maps of Sobolev spaces because of the loss of
  derivatives: the derivative of the transition map from $\U_\Gamma^i$
  to $\U_\Gamma^{i'}$ does not map
  ${\M}^{\univ,i}_{k,p,l,\Gamma}(\ul{L},D)$ to
  ${\M}^{\univ,i'}_{k,p,l,\Gamma}(\ul{L},D)$ but rather to
  ${\M}^{\univ,i'}_{k-1,p,l,\Gamma}(\ul{L},D)$.  It seems likely that
  using elliptic regularity as in Dragnev \cite{dragnev:fred} one can
  show that $\bigcup_i {\M}^{\univ,i}_{k,p,l,\Gamma}(\ul{L},D)$ is a
  $C^q$ Banach submanifold by showing that the transition maps are
  differentiable after restricting to holomorphic maps, by an
  inductive argument showing elliptic regularity for each derivative.
  The more straightforward approach taken here is to apply Sard-Smale
  in each local trivialization, and then show that the moduli space
  with fixed almost complex structure is $C^q$.  For similar reasons,
  there are issues representing the deformations of complex structure
  on the domains as variations of the nodal points on curves with
  fixed complex structures: if one does so, then the requirement that
  the maps on either side of the node agree is not differentiable and
  so the resulting space is not a differentiable Banach manifold.  In
  this respect, \cite[Proposition 5.7]{cm:trans} is incorrect, but the
  treatments in \cite{gersten:trans} and \cite[Section 6.2]{ms:jh}
  avoid this problem, by different means; essentially McDuff-Salamon
  \cite{ms:jh} solve this problem by breaking the construction down
  into stages.
\end{remark} 

\begin{remark} {\rm (Crowded types)} \label{uncrowd} 
Given a crowded type $\Gamma$, let $\Gamma'$ denote a type 
obtained by forgetting all but one marking on each maximal ghost
component.  Because of the (Product axiom), that forgetful map lifts to
a map $\M_{\Gamma}(\ul{L},D) \to \M_{\Gamma'}(\ul{L},D)$.  Thus
$\M_\Gamma(\ul{L},D)$ is also smooth, although not of expected
dimension.  In particular this shows that if $\Gamma$ is a type with a
non-transverse intersection, then $\M_{\Gamma}(\ul{L},D)$ is empty
since $\M_{\Gamma'}(\ul{L},D)$ is.
\end{remark} 

The theorem above implies the existence of regular collections of
perturbation data.  We say that a coherent collection
$\ul{J} = (J_\Gamma)$ is {\em regular} if each
$J_\Gamma \in {\cJ}^{\reg}_\Gamma(X,D)$ for each uncrowded type
$\Gamma$ of stable trajectory of index one or two.  Since the set of
types of stable trajectories with a given underlying stable strip is
countable, the space of regular coherent collections is non-empty.

\begin{remark}  \label{antiinv}
\begin{enumerate} 
\item {\rm (Extension to boundary divisors)} The results of Theorem
  \ref{main} hold for the moduli spaces $\M_\Gamma(\ul{L},\ul{D})$
  when we add a pair of boundary divisors $(D_0,D_1)$ by an adaptation
  similar to that of \cite[Theorem 9.8]{cm:trans}.

\item 
{\rm (Extension to families of divisors)} The results of Theorem
  \ref{main} hold for the moduli spaces $\M_\Gamma(\ul{L},\ul{D})$
  when we add families of divisors by arguments similar to that of the
  proof of invariance of \cite{cm:trans}.

\item {\rm (Anti-symplectic involutions)} In the case that $L_b$ is
  the fixed point set of a anti-symplectic involution $\iota_b$, the
  same result (regularity for a comeager set of perturbations) holds
  for perturbation data $J_\Gamma$ that are $\iota_b$-anti-invariant
  on $\ell_b^{-1}(\infty)$, $b \in \{ 0 , 1 \}$.  Adapted stable maps
  $u: C \to X$ on involution-fixed domains $C$ automatically satisfy
$u(z) \neq \iota_b(u(*z)) = \iota_b(u(\ol{z}))$
for some $z \in C$, where $*$ acts as complex conjugation on the disks
at infinity, the latter being identified to complex unit disks.
Indeed, an involution-fixed domain $C$ automatically has markings
$z_i$ contained in the fixed-point set of the involution $C_\R$.  If
$u(z_i) = \iota_b(u(\ol{z_i})) = \iota_b(u(z_i))$ then $u(z_i)$ is
$\iota_b$-fixed.  So $u(z_i) \in L_b$ which is impossible since $L_b
\cap D = \emptyset$.  Hence $u(z_i) \neq \iota_b(u(\ol{z_i})) $ and
the same is true for $z \in C$ near $z_i$ as well.  In this case an
anti-invariant perturbation which is non-vanishing near $u(z)$ but
vanishing near $\iota_b u(\ol{z})$ which makes the universal moduli
space transverse.  The map $u \mapsto \iota_b \circ u \circ * $
defines an involution of the moduli space $\M_\Gamma(\ul{L},\ul{D})$
also denoted $\iota_b$.

\end{enumerate} 
\end{remark} 

\begin{remark} \label{true} The strata $\M_\Gamma(\ul{L},\ul{D})$ of
  index two and expected dimension zero are of three possible types.
\begin{enumerate} 
\item The first possibility is that $\Gamma$ is a tree with a single
  edge corresponding to a node connecting two components connected by
  a segment of infinite length.  The normal bundle to the stratum has
  fiber canonically isomorphic to $\R_{\ge 0} ,$ corresponding to
  deformations that make the length finite.  We call such a
  $\M_\Gamma(\ul{L},\ul{D})$ a {\em true boundary stratum}.  We denote
  by $\cT$ the set of combinatorial types of true boundary strata.
  Each is represented by a tree with two vertices, one finite edge
  endowed with an infinite metric and some number of semi-infinite
  edges.  There are two subcases, depending on whether the incoming
  and outgoing markings lie on the same disk component, or different
  disk components; the latter subcase corresponds to concatenation of
  Floer trajectories.
\item The second possibility is that $\Gamma$ corresponds to a stratum
  with a boundary node of length zero.  We call such a
  $\M_\Gamma(\ul{L},\ul{D})$ a {\em fake boundary stratum.}  The
  stratum is not a boundary in the sense that the space is not locally
  homeomorphic to a manifold with boundary near such a stratum since
  any such trajectory may be deformed either by deforming the disk
  node, or deforming the length of the node to a positive real number.
\end{enumerate} 
\end{remark} 

\begin{remark} \label{spheres2} {\rm (Diagonal boundary conditions)}
  We continue Remark \ref{spheres}.  In the case of diagonal boundary
  conditions, for the disk components at infinite length we assume
  that the domain-dependent almost complex structure $J_\Gamma$ is
  pulled back from the universal moduli space of {\em marked spheres}
  on each such component.  Let $\Gamma$ denote the type of stable
  marked strip with at at least one component at infinity.  Denote the
  $\ti{\Gamma}$ the type of stable curve where each disk has been
  replaced by a sphere.  Since there are no boundary markings on each
  component at infinity, there must be at least one component at
  infinity with a single node, connecting that component to the rest
  of the configuration.  Such a component cannot be a sphere since
  configurations containing these are codimension two.  In the case of
  a disk, by assumption the perturbation system on the disk
  (considered as a sphere with anti-holomorphic involution) is
  independent of the choice of involution.  By rotating the sphere in
  a way fixing the node but not fixing any other marking on the
  boundary, we obtain a family of configurations which, as disks with
  markings, are all distinct.  Hence again such a configuration is not
  isolated.
\end{remark}

\subsection{Compactness}  

Next we show that the subset of the moduli space satisfying an energy
bound is compact for suitable perturbation data and stabilizing
divisors of sufficiently large degree.  In general, compactness of the
spaces of adapted Floer trajectories (so that they have stable
domains) can fail since unstable components can develop.  For
simplicity, we restrict to the case of a single stabilizing divisor;
the other cases are similar.

\begin{definition} \label{stabdef} 
For $E > 0$, an almost complex structure $J_D \in \J(X,D)$ is {\em
  $E$-stabilized} by a divisor $D$ if and only if the following holds:
\begin{enumerate}
%
\item[] {\rm (Sufficient intersection condition)} Each non-constant
  $J_D$-holomorphic sphere $u: S^2 \to X$ with energy less than $E$
  has at least three intersection points with the divisor $D$, that
  is, $u^{-1}(D)$ has order at least three.
\end{enumerate} 
\end{definition} 

\begin{definition} \label{suffic} A divisor $D$ with Poincar\'e dual 
$[D]^\dual = k m_0[\omega]$ for some $k \in \N$ has {\em sufficiently
    large degree} for an almost complex structure $J_D$ if and only if
\begin{itemize} \label{large} 
\item[] {\rm (Sphere condition)}  $([D]^\dual,\alpha) \ge 2(c_1(X),\alpha) + \dim(X) + 1$ 
for all $\alpha \in H_2(X,\Z)$ representing
 non-constant 
$J_D$-holomorphic spheres, and

\vskip .1in

\item[] {\rm (Disk condition)} $([D]^\dual,\beta) \geq 1$ for all
  $\beta \in H_2(X,L,\Z)$ representing non-constant $J_D$-holomorphic
  disks.
\end{itemize}
\end{definition} 

Sufficiently large divisors always exist by an argument of
Cieliebak-Mohnke \cite{cm:trans}: Let $J \in \J(X,\omega)$ be a
compatible almost complex structure.  By \cite[Lemma 8.11]{cm:trans}
and Lemma \ref{rati_divi2}, for any $\theta > 0$, there exists
$d_0(\theta)$ such that if $k m_0 \ge d_0(\theta)$ then $D$ is
sufficiently large for any almost complex structure $J_D$ that is
$\theta$-close to $J$.

We introduce the following notation for almost complex structures
close to the given one.  Given $J \in \J(X,\omega)$ (resp.
$J_t \in \J(X,\omega)$ depending smoothly on $t\in [0,1]$), denote by
$\J_\tau(X,D,J,\theta)$ (resp. $\J_{\tau,t}(X,D,J_t,\theta)$) the
space of tamed almost complex structures $J_D \in \J_\tau(X,\omega)$
such that $\Vert J_D - J \Vert < \theta$ (resp. of families
$J_{D,t}\in \J_\tau(X,\omega)$, $t\in [0,1]$, such that
$\Vert J_{D,t} - J_t \Vert < \theta$) in the sense of
\cite[p. 335]{cm:trans}.  The following lemma on existence of
stabilizing almost complex structures is a special case of
Cieliebak-Mohnke \cite[Proposition 8.14, Corollary 8.20]{cm:trans}.

\begin{lemma} \label{largelem} Suppose that $D$ has sufficiently large 
degree for an almost complex structure $\theta$-close to $J$.  For
each energy $E > 0$, there exists an open and dense subset
$\J^*(X,D,J,\theta,E)$ in $\J_\tau(X,D,J,\theta)$ such that if $J_D
\in \J^*(X,D,J,\theta,E)$, then $J_D$ is $E$-stabilized by $D$.
Similarly, if $D = (D_t)$ is a family of divisors, then for each
energy $E > 0$, there exists a dense and open subset
$\J^*_t(X,D,E,J_t,\theta)$ in $\J_t(X,D,J_t,\theta)$ such that if
$J_{D,t} \in \J^*_{t}(X,D,E,J_t,\theta)$, then $J_{D,t}$ is
$E$-stabilized for all $t$.
\end{lemma}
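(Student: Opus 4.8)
The plan is to adapt the transversality-for-divisors arguments of Cieliebak--Mohnke \cite[Section 8]{cm:trans} to the present Floer set-up. Here $J_D$ is an honest (domain-independent) almost complex structure on $X$, so none of the difficulties of Remark \ref{loss} arise and the conditions of Definition \ref{stabdef} can be handled one at a time. I would begin by noting that two of them are automatic once the degree is large. For $\theta$ small the almost complex structures in $\J_\tau(X,D,J,\theta)$ are uniformly $\omega$-tame, so there is an energy quantum $\hbar>0$ such that no nonconstant $J_D$-holomorphic sphere, and no nonconstant $J_D$-holomorphic disk with boundary on $L_0\cup L_1$, has energy less than $\hbar$, uniformly in $J_D\in\J_\tau(X,D,J,\theta)$. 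Since $[D]^\dual=km_0[\omega]$, the intersection number of a nonconstant $J_D$-holomorphic sphere $u$ of class $A$ with $D$ is $[D]^\dual\cdot A=km_0\,\omega(A)\ge km_0\hbar$, which is $\ge 3$ as soon as the degree is large; and a nonconstant $J_D$-holomorphic disk of class $\beta$ has, by \eqref{inteform} and the estimate in the proof of Lemma \ref{rati_divi2}, $u_*[C]\cdot[D]\ge(t_0k-C_0)\,\omega(\beta)\ge(t_0k-C_0)\hbar\ge 1$ for $k$ large with $C_0$ independent of $k$ and of $J_D$, hence (a disk disjoint from $D$ having zero intersection number) must meet $D$. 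Thus the disk part of Definition \ref{stabdef}(b) needs no genericity at all, and what remains is to arrange, by choosing $J_D$ generically, that (i) no nonconstant $J_D$-holomorphic sphere of energy $<E$ lies in $D$ and (ii) the intersection of a nonconstant $J_D$-holomorphic sphere of energy $<E$ with $D$ consists of at least three \emph{distinct} points.

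For (i) and (ii) I would run a Sard--Smale argument on universal moduli spaces fibred over $\J_\tau(X,D,J,\theta)$. For (i), a nonconstant $J_D$-holomorphic sphere contained in $D$ covers a simple $J_D|_D$-holomorphic sphere in $D$ of some class $A''$ with $\iota_*A''=A$ and $0<\omega(A)<E$, and the universal space of such simple spheres projects to $\J_\tau(X,D,J,\theta)$ as a Fredholm map of index $2\langle c_1(TD),A''\rangle+\dim D-6$. By adjunction $\langle c_1(TD),A''\rangle=\langle c_1(TX),A\rangle-[D]^\dual\cdot A$, and feeding in the sufficiently-large-degree inequality $[D]^\dual\cdot A\ge 2\langle c_1(TX),A\rangle+\dim X+1$ together with $\dim D=\dim X-2$ collapses the index to
$$2\langle c_1(TD),A''\rangle+\dim D-6\ \le\ -\,[D]^\dual\cdot A-9\ =\ -km_0\,\omega(A)-9\ <\ 0 ,$$
so the moduli space is empty for a comeager set of $J_D$; intersecting over the countably many relevant classes $A''$ keeps this comeager. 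For (ii) I would invoke the transversality-at-the-divisor result \cite[Lemma 6.5]{cm:trans} (whose Floer analogue is already used in the proof of Theorem \ref{main}) to make all simple $J_D$-holomorphic spheres of energy $<E$ meet $D$ transversally; then a possibly multiply covered such sphere $u=v\circ\phi$, with $v$ simple, has $v^{-1}(D)$ consisting of $[D]^\dual\cdot v_*[S^2]=km_0\,\omega(v_*[S^2])\ge km_0\hbar\ge 3$ distinct points (using (i) to rule out $v\subset D$), so $u^{-1}(D)=\phi^{-1}(v^{-1}(D))$ has at least three distinct points as well.

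Finally I would take $\J^*(X,D,J,\theta,E)$ to be the set of $J_D$ obeying (i) and (ii). It is dense by the two comeagerness statements, and open by Gromov--Floer compactness: a $C^l$-limit of a sequence violating (i) or (ii) would again be a nonconstant $J_D$-holomorphic sphere lying in $D$, respectively a simple one tangent to $D$, of energy $<E$. Every $J_D\in\J^*$ is then $E$-stabilized by the first two paragraphs. The family case $D=(D_t)$ goes through verbatim with the pair $(t,J_{D,t})_{t\in[0,1]}$ in place of $J_D$: the universal moduli spaces acquire one extra parameter, so the index in (i) becomes $\le-[D_t]^\dual\cdot A-8<0$, still negative, and all the energy and intersection estimates are uniform in $t$, producing the dense open set $\J^*_t(X,D,E,J_t,\theta)$. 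The one step calling for an actual idea rather than bookkeeping is (i): a priori a sphere inside $D$ whose first Chern number is very negative could sit in a moduli space of large dimension, and it is precisely the cancellation displayed above --- engineered by the sufficiently-large-degree hypothesis --- that excludes it.
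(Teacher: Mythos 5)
Your overall Sard--Smale strategy is the same as the paper's, and the index computation ruling out spheres inside $D$ is essentially identical --- the paper writes the expected dimension as $\dim(X)+2(c_1(D),d)-5$ and concludes it is negative from Definition~\ref{suffic} via $c_1(D)=c_1(X)|_D-[D]|_D$, while you write the same thing via the adjunction formula and carry the inequality through to $-[D]^\dual\cdot A-9<0$. You also correctly observe that the disk half of the intersection condition in Definition~\ref{stabdef} needs no genericity (a point the paper's proof leaves implicit). Where you genuinely diverge from the paper is the ``at least three intersection points'' assertion for spheres. The paper never makes intersections transversal; it instead uses regularity of the tangency-constrained universal moduli spaces to bound the maximal intersection multiplicity $\mu\le\dim(X)/2-2+(c_1(X),d)$, and then deduces $\ge 3$ geometric intersection points from the strict inequality $([D],d)>2\mu$. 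You instead force every intersection to be transverse via a Cieliebak--Mohnke-style jet-transversality statement, so that the number of distinct points equals the homological intersection number, and then count. Both routes work; yours eliminates the multiplicity bookkeeping at the price of invoking a somewhat stronger genericity condition (transversality of intersection rather than bounded tangency order).

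The one thing you should repair is the source of the lower bound $[D]^\dual\cdot v_*[S^2]\ge 3$. You obtain it from an energy quantum $\hbar$ and ``$km_0\hbar\ge 3$ as soon as the degree is large''; but the lemma's hypothesis is exactly Definition~\ref{suffic}, which does not by itself imply $([D]^\dual,\alpha)\ge 3$ when $(c_1(X),\alpha)$ is sufficiently negative, and makes no reference to $\hbar$. The clean fix is to use regularity once more: for $J_D$ in your comeager set, the existence of a simple $J_D$-holomorphic sphere in class $A'=v_*[S^2]$ of energy $<E$ forces $\dim X+2(c_1(X),A')-6\ge 0$, i.e.\ $(c_1(X),A')\ge 3-\dim X/2$; substituting into $[D]^\dual\cdot A'\ge 2(c_1(X),A')+\dim X+1$ gives $[D]^\dual\cdot A'\ge 7\ge 3$, with no extra hypothesis on the degree. (The paper's multiplicity argument also tacitly needs $([D],d)\ge 3$, and it is resolved in precisely this way: $\mu\ge 1$ already requires the tangency moduli space to be nonempty, hence of nonnegative expected dimension.) With this substitution your argument matches the stated hypotheses exactly.
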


\begin{proof} 
  For the sake of completeness, we recall the proof in the
  time-independent case.  An application of Sard-Smale shows that the
  set $\J^*(X,D,J,\theta,E)$ of almost complex structures such that
  all simple holomorphic spheres of energy at most $E$ are regular is
  comeager in $\J(X,D,J,\theta)$. An argument using Gromov compactness
  (see \cite{cm:trans}) shows that $\J^*(X,D,J,\theta,E)$ is open.  To
  complete the proof it remains to show that any
  $J_D \in \J^*(X,D,J,\theta,E)$ is $E$-stabilized.

  We compute the dimension of the moduli space of holomorphic spheres
  in the divisor as follows.  If $i: D \to X$ is the inclusion then
  $ i^*TX $ is the sum of $TD$ and the normal bundle $N$ to $D$.
  Hence $\Lambda^{\top} i^*TX \cong \Lambda^{\top} TD \otimes N $ and
$c_1(D) = c_1(X) |_{D} - [D]|_{D} .$
For $[D]$ sufficiently large the expected dimension of the
parametrized moduli space of simple holomorphic spheres in $D$ of
class $d \in H_2(D)$ is
\begin{equation} \label{expdim} 
\dim(X) + 2 (c_1(D),d) - 5 = \dim(X) + 2 (c_1(X), i_* d) - 2([D],i_*
d)-5 < 0. \end{equation}
So the moduli space of such spheres is empty for generic almost
complex structures.  Since any $J_D$-holomorphic sphere covers a simple
holomorphic sphere, there are no multiply covered 
$J_D$-holomorphic spheres of energy less than $E$ in $D$ either.  

The lower bound on intersection points follows from an upper bound on
intersection multiplicity.  The dimension of the moduli space of
sphere components with intersection multiplicity $\mu$ at one point in
the homology class $d \in H_2(X)$ is
$$ \dim(X) - 4 + 2 (c_1(X),d) - 2 \mu \ge 0 .$$
Hence
$ \mu \leq \dim(X)/2 - 2 + (c_1(X),d) .$
On the other hand, since the divisor $D$ is sufficiently large the
total intersection number with $D$ is
$ ( [D], d) \ge 2(c_1(X),d) + \dim(X) > 2 \mu .$
If there were two or fewer intersection points with $D$ each with
multiplicity at most $\mu$, then since $([D],d)$ is the sum of
intersection multiplicities we would have $([D],d) \leq 2 \mu$, a
contradiction.  Hence each $J_D$-holomorphic sphere must have at least
three intersection points with the divisor $D$.
\end{proof}

A version of Gromov compactness holds for moduli spaces of
trajectories defined using stabilizing almost complex structures.  We
restrict to perturbation data taking values in $\J^*(X,D,J,\theta,E)$
for a (weakly or strictly) stabilizing divisor $D$ having sufficiently
large degree for an almost-complex structure $\theta$-close to $J$.
Let $J_D \in \J(X,D,J,\theta)$ be an almost complex structure that is
stabilized for all energies, for example, in the intersection of
$\J^*(X,D,J,\theta,E)$ for all $E$.  For each energy $E$, there is a
contractible open neighborhood of $J_D$ in $\J^*(X,D,J,\theta,E)$ that
is $E$-stabilized.  Let $\Gamma$ be a type of stable trajectory.
Disconnecting the components that are connected by boundary nodes with
positive length one obtains types $\Gamma_1,\ldots, \Gamma_l$, and a
decomposition of the universal curve $\U_\Gamma$ into components
$\ol{\U}_{\Gamma_1},\ldots, \ol{\U}_{\Gamma_l}$.  Since
$PD[D] = k m_0[\omega]$, any stable trajectory with domain of type
$\Gamma$ and only transverse intersections with the divisor has energy
at most
$$   n(\Gamma_i,k) = \frac{n(\Gamma_i)}{C(k)}$$ 
on the component in $\ol{\U}_{\Gamma_i}$, where $n(\Gamma_i)$ is the
number of markings on $\ol{\U}_{\Gamma_i}$ and $C(k)$ is the
increasing linear function of $k$ arising in the construction of $D$
in Section \ref{stab}.  A perturbation datum $J_\Gamma$ for a type of
stable strip $\Gamma$ is {\em stabilized} by $D$ if $J_\Gamma$ takes
values in $\J^*(X,D,J,\theta, n(\Gamma_i,k))$ on $\ol{\U}_{\Gamma_i}$.
For example, in Figure \ref{onetwo} assuming $C(k) = \frac{k}{K}$ for
a certain $K>0$ and $k = 1$, this means that the perturbation data
should be chosen $K$-stabilized on the boundary of the square, while
on the smaller middle square the perturbation data should be chosen in
the smaller set of $2K$-stabilized perturbation data.

In order to rule out configurations involving markings on ghost
bubbles appearing in the compactification we introduce a second
further assumption on the perburbations.  A perturbation data
$J_\Gamma$ for a type of stable strip $\Gamma$ is {\em
  ghost-marking-independent} if the following holds: Suppose that
$\Gamma$ has a crowded ghost component and let $\Gamma'$ denote the
combinatorial type obtained by forgetting the first marking on that
component.  Then $J_\Gamma$ is obtained from $J_{\Gamma'}$ by pullback
under the forgetful map.  \footnote{We thank G. Xu for pointing out
  the omission of the second item which was present in a different
  form in Cieliebak-Mohnk \cite{cm:trans} but left out in the
  published version of the paper. }

\begin{theorem} \label{compthm} {\rm (Compactness for fixed type)} For
  any collection $(J_\Gamma)$ of coherent, regular, stabilized,
  ghost-marking-independent perturbation data and any uncrowded type
  $\Gamma$ of expected dimension at most one, the moduli space $
  \ol{\M}_\Gamma(\ul{L},D)$ of adapted stable trajectories of type
  $\Gamma$ is compact and the closure of $\M_\Gamma(\ul{L},D)$
  contains only configurations with disk bubbling.
\end{theorem}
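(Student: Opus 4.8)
The plan is to run Gromov-Floer compactness and then control the limit using the stabilizing divisor together with the expected-dimension count from Theorem~\ref{main}. Since a fixed combinatorial type $\Gamma$ prescribes the homotopy classes of all components, the energy-area relation (with the Hamiltonian perturbation supported away from $D$) shows that the $K$-energy is uniformly bounded on $\M_\Gamma(\ul{L},D)$. Thus any sequence $u_\nu: C_\nu\to X$ in $\M_\Gamma(\ul{L},D)$ has bounded energy, and Theorem~\ref{preconv} gives, after passing to a subsequence, a convergence $[C_\nu]\to[C]$ in the compact space $\ol{\M}_\Gamma$ and a limiting stable Floer trajectory $u:\hat C\to X$, where $\hat C$ arises from $C$ by attaching finitely many sphere and disk bubble trees. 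As the Gromov-Floer topology on $\ol{\M}_\Gamma(\ul{L},D)$ is metrizable, it suffices to show that $u$ is a stable $D$-adapted trajectory whose combinatorial type is obtained from $\Gamma$ by a degeneration involving only boundary-node formation, strip breaking, and bubbling off \emph{disk} components with boundary in $L_0$ or $L_1$ --- never sphere bubbling.

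First I would verify that $u$ is $D$-adapted. The non-constant sphere property is immediate: every non-constant sphere component of $\hat C$ has energy at most the uniform bound $E$, so it cannot lie in $D$ since $J_D$ is $E$-stabilized (Definition~\ref{stabdef}); similarly no non-constant disk or strip component lies in $D$, because $D\subset X-L_b$ is disjoint from the boundary conditions and from the limiting intersection points. Hence $u^{-1}(D)$ meets each non-ghost component in finitely many points, each of positive local intersection multiplicity. For the marking property, limit markings lie in the closed set $D$; conversely, near any $p\in u^{-1}(D)$ on a non-ghost component the $u_\nu$ converge in $C^\infty_{\loc}$ away from the nodes, so by persistence of intersections $u_\nu$ meets $D$ near $p$ for large $\nu$, and that intersection point carries a marking of $u_\nu$, which converges to $p$. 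Markings cannot collide in the limit: a collision would force a tangency of order $\ge2$, and the corresponding type has index two lower (as in the proof of Theorem~\ref{main}), hence negative expected dimension and empty moduli. Finally $\hat C$ is stable: a positive-energy disk bubble meets $D$ --- by condition~\eqref{cond_b} of Definition~\ref{defi_stab} for a strictly stabilizing divisor, and by condition~(b)$'$ of Definition~\ref{defi_weak} together with the fact that such bubbles may be taken $J_D$-holomorphic in the weakly stabilizing case --- hence acquires a marking that kills its automorphisms; unstable ghost pieces are absorbed into their neighbors in the limiting configuration, and the remaining ghost disk bubbles are stable by construction.

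It remains to rule out sphere bubbling altogether. Bubbling off a sphere component leaves the Fredholm index unchanged, since the total homology class is preserved; so if $\Gamma'$ denotes a degeneration of $\Gamma$ introducing a sphere component then $i(\Gamma')=i(\Gamma)\le2$ and Theorem~\ref{main} applies (reducing crowded limit types to uncrowded ones via Remark~\ref{uncrowd}), giving that $\M_{\Gamma'}(\ul{L},D)$ is smooth of its expected dimension. But that expected dimension is two lower than $\dim\M_\Gamma(\ul{L},D)$: the $D$-intersection points forced onto the bubble contribute a number of domain moduli parameters that exactly cancels against the codimension of the incidence conditions that they lie on $D$, leaving only the net loss coming from the new node. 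Since $i(\Gamma)\le2$ this expected dimension is negative, so $\M_{\Gamma'}(\ul{L},D)=\emptyset$. Therefore the only bubbling in the limit is disk bubbling, $u\in\ol{\M}_\Gamma(\ul{L},D)$, the space $\ol{\M}_\Gamma(\ul{L},D)$ is sequentially compact hence compact, and the closure of $\M_\Gamma(\ul{L},D)$ contains only configurations with disk bubbling.

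The hard part is twofold. First, passing the marking property to the limit rigorously: one must show that \emph{every} component of $u^{-1}(D)$, not merely the limit markings, is hit by a marking, which relies on the a priori finiteness of $u^{-1}(D)$ guaranteed by $E$-stabilization together with the persistence of intersections with the almost complex submanifold $D$. Second, the dimension bookkeeping for sphere bubbles in the treed-strip setting --- verifying that a sphere component genuinely costs expected dimension, so that the domain-dependent almost complex structures made available by a stabilizing divisor of sufficiently large degree (Definition~\ref{suffic}) really do eliminate them in the index $\le2$ range. Both are adaptations of the arguments of Cieliebak-Mohnke \cite{cm:trans} to Floer trajectories with Lagrangian boundary conditions; the rest is the standard Gromov-Floer compactness package.
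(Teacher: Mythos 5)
Your proposal follows the paper's own proof: run Gromov-Floer precompactness (Theorem~\ref{preconv}) to obtain a subsequential limit, verify the three adaptedness conditions using the stabilizing divisor and the choice of $E$-stabilized perturbation data, and rule out sphere bubbling by showing the relevant strata have negative expected dimension (non-ghost spheres via the cost of the extra interior node, ghost spheres via the tangency condition at the attaching node --- which is exactly how the paper handles the two cases). The only caveats are notational: your identity $i(\Gamma')=i(\Gamma)$ refers to the map-Fredholm index and conflicts with the paper's $i(\Gamma)$, which already incorporates the codimension-two drop from each interior node (under either convention the conclusion that the sphere-bubble stratum is empty is correct); and the bubbles are $J_\Gamma(p)$-holomorphic for $J_\Gamma(p)\in\J^*(X,D,J,\theta,E)$ rather than literally $J_D$-holomorphic, which is precisely why the paper establishes the intersection property uniformly on a $C^0$-neighborhood of $J$ (Lemma~\ref{rati_divi2}, Lemma~\ref{largelem}) rather than for a single almost complex structure.
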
 

\begin{proof}  
Because of the existence of local distance functions, similar to
\cite[Section 5.6]{ms:jh}, it suffices to check sequential
compactness.  Let $u_\nu: C_\nu \to X$ be a sequence of stable
trajectories of type $\Gamma$, necessarily of fixed energy
$E(\Gamma)$.  The sequence of stable strips $[C_\nu]$ converges to a
limiting stable strip $[C]$ in $\ol{\M}_\Gamma$.  Then $u_\nu: C_\nu
\to X$ has a stable Gromov-Floer limit $u: \hat{C} \to X$, where $\hat{C}$ is
a possibly unstable strip with stabilization $C$, see Theorem
\ref{preconv}.  We show that $u$ is adapted.

From (Compatible with the divisor), we have
$J_\Gamma = J_D \in \J^*(X,D,J,\theta,n(\Gamma_i,k))$ over $D$.  Let
$C_i$ be a connected component of $u^{-1}(D)$.  Either $C_i$ is a
single point, or a union of sphere components on which $u$ is
constant.  In the first case, $u$ has positive intersection
multiplicity with $D$ at $C_i$.  It follows from conservation of local
intersection multiplicity that $C_i$ is the limit of components of
$u^{-1}_\nu(D)$, which must contain markings by the (Marking Property)
for $u_\nu$.  Similarly, if $C_i$ is a union of sphere components,
then the intersection multiplicities at the nodes joining $C_i$ with
$C \backslash C_i$ are positive.  Let $C_{i,\nu}$ be a sequence of
subsets of $C_\nu$ converging to a small neighborhood $C_i'$ of $C_i$
in $C$.  Once again, the intersection multiplicity of $u | C_{i,\nu}$
with $D$ must be positive, hence $u_\nu^{-1}(D) \cap C_{i,\nu}$ is
non-empty for each $\nu$.  It follows that $C_i'$ also contains a
marking; since this holds for any neighborhood $C_i'$ of $C_i$, a
marking must be contained in $C_i$. Note that if
$u_\nu(z_{i,\nu}) \in D$ then $u(z_i) \in D$, by convergence on
compact subsets of complements of the nodes.  This shows the (Marking
property).

To see (Stable domain) property, consider possibly unstable sphere
components.  Since $J_\Gamma$ is regular, the trajectories $u_\nu$
have only transverse intersections with $D$ on the strip components.
Any unstable spherical component $\hat{C}_i$ of $\hat{C}$ attached to
a component of $C$ in $\ol{\U}_{\Gamma_i}$ has energy at most
$n(\Gamma_i,k)$.  Suppose that $u$ is non-constant on
$\hat{C}_i$. Then since $J_\Gamma$ is constant with value an element
of $\J^*(X,D,J,\theta, n(\Gamma_i,k))$ on $\hat{C}_i$, the restriction
of $u$ to $\hat{C}_i$ has at least three intersection points with $D$.
Since $D$ contains no non-constant holomorphic spheres, these
intersection points must be isolated and so markings, which
contradicts the instability of $\hat{C}_i$.  Hence the stable map $u$
must be constant on $\hat{C}_i$, and thus $\hat{C}_i$ must be stable.

Similarly any strip or disk component without interior markings occurs
via bubbling at a bubbling sequence approaching the boundary.  Since
the almost complex structure $J_\Gamma$ is equal to $J_D$ at the
boundary, the disk or strip is $J_D$-holomorphic.  Since $J_D$ is
stabilizing for $D,L_0\cup L_1$, any disk or strip component must have
at least one interior intersection point $z \in u^{-1}(D)$ with $D$.
The corresponding component of $u^{-1}(D)$ must contains a marking, so
either there is another component of the domain attached at $z$, or
$z$ is itself a marking; either way, this disk or strip component is
stable.  This shows that $\hat{C}$ is equal to $C$ and shows the
(Stable domain property).

It remains to check that the limiting configuration is uncrowded.
Suppose $C$ has a spherical component.  After forgetting all but one
marking on maximal ghost components (see Remark \ref{uncrowd}) we
obtain by pull-back under the forgetful map and the
(Ghost-marking-independent) axiom a configuration in an uncrowded
stratum $\M_{\Gamma'}(\ul{L},D)$ of negative expected dimension.  This
contradicts the transversality assumption for uncrowded combinatorial
types.  Hence all components of $C$ are disks.  But there are no disk
ghost components, since the divisor is disjoint from the Lagrangian.
\end{proof}

\begin{proposition}  There exist coherent collections of 
regular, stabilizing, ghost-marking-independent perturbation data. 
\end{proposition} 

\begin{proof} Since the set of stabilizing perturbations contains an
  open neighborhood of the fixed base almost complex structure $J_D$
  stabilizing for all energies, the intersection with the set of
  regular perturbations is non-empty.  In the proof of Theorem
  \ref{main}, perturbations on the ghost components were not necessary
  since constant maps are regular for $J_D$, so in fact that almost
  complex structure may be taken to equal $J_D$ on the ghost
  components.
\end{proof}

\begin{remark}\label{anti2} {\rm (Involutions)}   Continuing Remark \ref{anti},
  \ref{antiinv} suppose that $\iota_b: X \to X$ are anti-symplectic
  involutions, i.e.  $\iota_b^2= \on{Id}$ and
  $\iota_b^*\omega = -\omega$, with fixed locus $L_b$, $b \in \{0,1\}$
  and preserving $D_b$.  We show that perturbations exist satisfying
  good transversality and compactness properties, and so that the
  moduli spaces inherit the involution.

  First note that generic {\em anti-invariant} almost complex
  structures are stabilizing for all energies: Let
  $\J_\tau(D_b)^{\iota_b}$ be the space of tamed almost complex
  structures that are anti-invariant under the symplectic involution,
  that is, elements $J\in \J_\tau(D_b)$ such that $\iota_b^* J = - J$.
  Let $\M(D_b)$ be the moduli space of simple holomorphic spheres in
  $D_b$, and $\M(D_b)^{\iota_b}$ the moduli space of simple real
  holomorphic spheres with respect to the involution, that is, spheres
  $C$ equipped with an anti-holomorphic involution $\iota_C$ and a
  pseudoholomorphic map $u: C \to D_b$ such that
  $u \circ \iota_C = \iota_b|_{D_b}\circ u $.  For a comeager subset
  of $\J_\tau(D_b)^{\iota_b}$, $\M(D_b) \backslash \M(D_b)^{\iota_b}$
  is a smooth manifold of expected dimension and $\M(D_b)^{\iota_b}$
  is a smooth manifold of dimension
  $ \dim( \M(D_b)^{\iota_b}) = \dim \M(D_b)/2$, see \cite[Theorem
  1.11]{wel:inv}.  It follows from the argument of Lemma
  \ref{largelem} (but replacing the dimension on the left hand side of
  \eqref{expdim} by its half) that generic elements of
  $\J_\tau(D_b)^{\iota_b}$ are stabilizing for all energies.

  We wish to choose generic domain-dependent almost complex structures
  for which the anti-symplectic involutions induce an involution on
  the moduli space of Floer trajectories with disk bubbles. Let
  $\J_\tau(X,\ul{D})^{\iota_0,\iota_1}$ be the space of time-dependent
  almost complex structures
  $J_t \in \J_\tau(X), t \in [-\infty,\infty]$ such that $J_0 = J_D$
  and $J_{-\infty}= J_{D_0}$ resp. $J_\infty=J_{D_1}$ is
  anti-invariant under $\iota_b$ and preserves $D_b$ for $b = 0 $
  resp. $b = 1$.  Generic elements of
  $\J_\tau(X,\ul{D})^{\iota_0,\iota_1}$ are stabilized for all
  energies, by a time-dependent version of the argument from the
  previous paragraph.  Each element $J_t$ of
  $\J_\tau(X,\ul{D})^{\iota_0,\iota_1}$ induces a domain-dependent
  almost complex structure $J_\Gamma = J_t$ except that the domain is
  the {\em normalization} of any fiber of $\ol{\U}_\Gamma$, obtained
  by taking the disjoint union of the components and defining
  $J_\Gamma$ to equal $J_t$ on $\ell_0(-t), t \in [-\infty,0]$ resp.
  $\ell_1^{-1}(t), t \in [0,\infty]$.  For $J_\Gamma$ a
  domain-dependent perturbation of $J_{t}$ sufficiently close to $J_t$
  as in Remark \ref{anti}, all moduli spaces of adapted stable strips
  of expected dimension at most one are regular {\em and} compact.
  Furthermore, the involution in Remark \ref{antiinv} is well-defined.
\end{remark} 

\section{Floer cohomology}

In this section we construct Floer cohomology for admissible
Lagrangians, using the regularity of the perturbed moduli spaces in
the previous section.

\subsection{Fundamental classes} 

Although the moduli of index two Floer trajectories is not a manifold
(it is a cell complex), it has a natural rational fundamental class.
This class is a homology class of top dimension generating the local
homology groups at any point in the interior of a one-cell.  In this
section we use these classes to construct the Floer operator. Fix a
family of domain-dependent almost complex structures
$\ul{J} = (J_\Gamma)$ that is regular and stabilized by $D$, $D_0$ and
$D_1$ where $\ul{D} = (D,D_0,D_1)$ is a collection of stabilizing
divisors as in Remark \ref{marginal}, \ref{families} or \ref{anti2}.
Then transversality holds and the moduli spaces of expected dimension
at most one have the expected boundary.

\begin{proposition} \label{zeroone} {\rm (Zero and one-dimensional moduli spaces)} 
\begin{enumerate} 
\item The subset ${\M}_0(\ul{L},\ul{D})$ of expected dimension zero of
  $\ol{\M}^{}(\ul{L},\ul{D})$ is discrete; and
\item the components of the expected dimension one subset
  $\ol{\M}_1(\ul{L},\ul{D})$ of $\ol{\M}^{}(\ul{L},\ul{D})$ have
  one-dimensional cell complex structures.
\item The cell structures may be chosen so that the $0$-skeleton
  $\M_1(\ul{L},\ul{D})_0$ of the non-circle components of
  $\ol{\M}_{1}(\ul{L},\ul{D})$ is the (disjoint) union of
  zero-dimensional strata ${\M}_{\Gamma}(\ul{L},\ul{D})$, where
  $\Gamma$ ranges over true and fake boundary types. 
\end{enumerate} 
\end{proposition}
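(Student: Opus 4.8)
The plan is to read off all three statements from the transversality and compactness results already in hand: Theorem~\ref{main} (smoothness of each stratum, tubular neighbourhoods, compatible orientations) together with its extension in Remark~\ref{antiinv} to the boundary-divisor and involution setting, Theorem~\ref{compthm} (compactness of the moduli space of a fixed type), and the Gromov--Floer compactness of Theorem~\ref{preconv}.

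First I would treat (a). By Theorem~\ref{main} every stratum $\M_\Gamma(\ul{L},\ul{D})$ with $i(\Gamma)=1$ is a smooth manifold of expected dimension zero, so $\M_1(\ul{L},\ul{D})$ is a union of discrete sets; the content of the assertion is that the union is itself discrete in the Gromov--Floer topology. Suppose a sequence $[u_\nu]$ of pairwise distinct elements of $\M_1(\ul{L},\ul{D})$ converged. Passing to a subsequence we may take all $u_\nu$ in one homotopy class, hence of bounded energy, and Theorems~\ref{preconv} and~\ref{compthm} produce a limiting stable adapted trajectory $u$ of some type $\Gamma''$ with no sphere components bubbling off. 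Because the index is additive over components and, by Theorem~\ref{main}, every uncrowded stratum of negative expected dimension is empty, any nonconstant disk or extra strip component of $u$ beyond a single honest index-one strip is impossible; so $\Gamma''$ is equivalent to an honest index-one strip. The ``unique end'' part of the gluing argument in the proof of Theorem~\ref{main}(b) then forces $[u_\nu]=[u]$ for large $\nu$, a contradiction. Hence $\M_1(\ul{L},\ul{D})$ is discrete.

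Next, (b) and (c) together. Declare the one-cells of $\ol{\M}_2(\ul{L},\ul{D})$ to be the connected components of the one-dimensional (index-two) strata $\M_\Gamma(\ul{L},\ul{D})$ and the zero-cells to be the points of the zero-dimensional index-two strata. Theorem~\ref{compthm} shows that once an energy bound is fixed only finitely many types $\Gamma$ occur, each $\ol{\M}_\Gamma(\ul{L},\ul{D})$ is compact, and its compactification adds only strip breaking and disk bubbling (sphere bubbling is excluded by the stabilizing divisor; crowded types are discarded by passing to uncrowded ones as in Remark~\ref{uncrowd}); so the decomposition is locally finite. The tubular-neighbourhood half of Theorem~\ref{main}(b) --- every broken trajectory and every trajectory with a boundary node of length zero is the limit of a \emph{unique} end of the adjacent one-dimensional stratum, a neighbourhood of it in the closure being homeomorphic to $[0,1)$ along each such stratum --- shows that the closure of a one-cell is an interval obtained by attaching finitely many zero-cells at its ends, and that these incidences make $\ol{\M}_2(\ul{L},\ul{D})$ a one-dimensional cell complex; components meeting no zero-dimensional stratum are compact one-manifolds without boundary, i.e.\ circles. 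For (c), by construction the zero-skeleton of the union of the non-circle components is the (disjoint, since distinct types give distinct equivalence classes) union of the zero-dimensional index-two strata occurring as ends; since the only codimension-one degenerations of an honest index-two strip are the formation of a node of infinite length (giving the true boundary types $\cT$) and the formation of a boundary node of length zero with a disk bubble (the fake boundary types), this union runs exactly over the true and fake boundary types.

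The hard part will be the bookkeeping in (b): assembling all the index-two strata and their incidence relations into a genuine cell complex. Beyond compactness this needs the gluing theorems of Theorem~\ref{main}(b) in both the strip-breaking and the boundary-node case, and the delicate point is the local structure near a fake boundary stratum, where --- as recorded in the remark on true and fake boundary strata --- the space fails to be a manifold with boundary because such a configuration can be deformed either by smoothing the node or by giving it positive length; one must check that the finitely many one-cells adjacent to it all attach along it as one common zero-cell.
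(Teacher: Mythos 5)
Your argument is correct and follows essentially the same route as the paper's proof: invoke Theorem~\ref{main} (transversality and tubular neighbourhoods) together with Theorem~\ref{compthm} (compactness for fixed type) to see that the index-one strata are finite sets of points, and that each index-two stratum of type with one vertex is a compact $1$-manifold with boundary, hence an interval or a circle; the cell complex is then obtained by gluing intervals along their boundary points. You merely spell out a few steps the paper leaves implicit — the sequential-compactness argument for discreteness in (a), and the identification of the $0$-cells with the true and fake boundary strata in (c), which the paper dismisses as ``obvious.''
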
 

\begin{proof}  
  By Theorems \ref{compthm}, \ref{main} each stratum of
  $\ol{\M}^{< E}(\ul{L},\ul{D})$ of expected dimension at most one has
  expected dimension and has compact closure and only finitely many
  combinatorial types occur.  Part (a) 
  follows from the fact that any compact $0$-manifold is a finite set
  of points. 

  For parts (b) and (c) note that by Theorem \ref{main}, for every
  type $\Gamma$ of index two with only one vertex, each connected
  component of $\ol{\M}_\Gamma(\ul{L},\ul{D})$ is a compact connected
  one-manifold with (possibly empty) boundary corresponding to the
  boundary types in Remark \ref{true} and so either homeomorphic to a
  closed interval or a circle.  The space $\ol{\M}_{1}(\ul{L},\ul{D})$
  is obtained from their union by gluing together the closed intervals
  along the boundary points, and so has the structure of a
  one-dimensional cell-complex.
\end{proof} 

Introduce the following notation for moduli spaces.  Denote by
$\M_{1}(\ul{L},\ul{D})_0$ (resp. $\M_{1}(\ul{L},\ul{D})_1$) the
$0$-skeleton (resp. the $1$-skeleton) of
$\ol{\M}_{1}(\ul{L},\ul{D})$. We consider the relative singular
homology $H(\ol{\M}_{1}(\ul{L},\ul{D}), \M_{1}(\ul{L},\ul{D})_0,\Q) $
with rational coefficients.  Assuming the cell complexes are finite
for each energy bound, each $1$-cell is oriented by Theorem \ref{main}
fundamental class relative to the $0$-skeleton.  The sum of these
classes for each index two type $\Gamma$ with one vertex and energy
bound $E$ is denoted by $[\ol{\M}_\Gamma^{<E}(\ul{L},\ul{D})_1 ] $.
Let $|\Gamma| = \ul{n} = (n,n_0,n_1)$ denote the number of
semi-infinite edges in $\Gamma$ corresponding to interior markings of
each type and $|\Gamma|! = n!n_0!n_1!$.  Let $ E> 0 $.  The rational
fundamental class of $\ol{\M}^{< E}_{1}(\ul{L},\ul{D})$ is
\begin{multline*}  [\ol{\M}^{< E}_{1}(\ul{L},\ul{D}) ] := \sum_{\Gamma}
(|\Gamma|!)^{-1} [\M_\Gamma^{< E}(\ul{L},\ul{D})_1 ] 
  \in H(\ol{\M}_{1}^{<
    E}(\ul{L},\ul{D}), \M_{1}^{< E}(\ul{L},\ul{D})_0,\Q)
\end{multline*}
where $\Gamma$ ranges over types of expected dimension one.  We write
the zero-dimensional rational fundamental class
$$ [ {\M}_0^{< E}(\ul{L},\ul{D}) ] := \sum_{[u] \in {\M}_1^{< E}(\ul{L},\ul{D}) }
\sigma([u]) \eps([u]) [u] $$
where the coefficient $\sigma([u]) \in \Q$ is equal to
$|\Gamma|!^{-1}$ if $[u]$ is represented by an element of
${\M}^{< E}_\Gamma(\ul{L},\ul{D})$ and $\eps([u])$ is the orientation
sign in Theorem \ref{main}.

The rational fundamental class of the one-dimensional locus defined
above is an element of {\em relative} homology and we investigate its
boundary. Recall that the long exact sequence for relative homology
includes a {\em boundary map}
$$ \delta: \ H_1(\ol{\M}_{1}^{< E}(\ul{L},\ul{D}), \M_{1}^{< E}(\ul{L},\ul{D})_0,\Q) 
\to H_0(\M_{1}^{< E}(\ul{L},\ul{D})_0,\Q) .$$

\begin{theorem} \label{fundclass3} {\rm (Boundary of the rational
fundamental class)} The dimension one component $\ol{\M}_{1}^{< E}(\ul{L},\ul{D})$ 
of $\ol{\M}^{<E}(\ul{L},\ul{D})$ has rational fundamental class relative to the
  $0$-skeleton
$$
 [\ol{\M}_{1}^{< E}(\ul{L},\ul{D})] \in H_1(\ol{\M}_{1}^{< E}(\ul{L},\ul{D}),
 \M_{1}^{< E}(\ul{L},\ul{D})_0,\Q)$$
with image
$$ \delta [\ol{\M}_{1}^{< E}(\ul{L},\ul{D})] \in H_0(\M_{1}^{< E}(\ul{L},\ul{D})_0,\Q)$$
equal to the sum of fundamental classes of true boundary components
$$ \delta [\ol{\M}_{1}^{< E}(\ul{L},\ul{D})] = \sum_{\Gamma \in \T}
[\ol{\M}_{\Gamma}^{< E}(\ul{L},\ul{D})] $$ 
where $\T$ is the set of index two true boundary types.
\end{theorem}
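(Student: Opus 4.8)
The plan is to reduce the statement to the cellular structure of $\ol{\M}_2^{<E}(\ul{L},\ul{D})$ provided by Theorems~\ref{compthm}, \ref{main} and Proposition~\ref{zeroone}. Fix $E>0$. By Theorem~\ref{compthm} the space $\ol{\M}_2^{<E}(\ul{L},\ul{D})$ is compact and only finitely many combinatorial types of index two occur below energy $E$, so it is a \emph{finite} one-dimensional cell complex, and by Proposition~\ref{zeroone}(c) its $0$-skeleton $\M_2^{<E}(\ul{L},\ul{D})_0$ is the disjoint union of the zero-dimensional strata $\M_\Gamma^{<E}(\ul{L},\ul{D})$, $\Gamma$ a true or a fake boundary type. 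The relative class $[\ol{\M}_2^{<E}(\ul{L},\ul{D})]$ is, by definition, the sum over the one-cells of their fundamental classes, each one-cell oriented as in Theorem~\ref{main}(c) and weighted by $(|\Gamma|!)^{-1}$, $\Gamma$ its one-vertex type. Since $\delta$ is the connecting homomorphism of the pair $(\ol{\M}_2^{<E},\M_2^{<E}{}_0)$, computing it is the same as computing the cellular boundary: $\delta[\ol{\M}_2^{<E}(\ul{L},\ul{D})]$ is the signed weighted sum, over the $0$-cells, of the endpoints of the incident one-cells. It therefore suffices to determine, at each point of the $0$-skeleton, which one-cells are incident, with what weights, and with what endpoint signs. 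First I would treat the fake boundary points, then the true ones.

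\textbf{Fake boundary points.} Let $p\in\M_\Gamma^{<E}(\ul{L},\ul{D})$ with $\Gamma$ a fake boundary type, so the domain has a boundary node of length zero. The only types $\Gamma'>\Gamma$ yielding a one-dimensional stratum are the two obtained from $\Gamma$ by (Making the edge non-zero) and by (Collapsing the edge); by Theorem~\ref{main}(b) each of these is a codimension-one stratum whose normal parameter is a copy of $\R_{\ge 0}$ (trivial, resp.\ the boundary gluing parameter $\cG_{\Gamma'}(C)/\cG_\Gamma(C)$ of \eqref{gluing}), and the associated gluing map identifies a neighbourhood of $p$ in the corresponding $\ol{\M}_{\Gamma'}^{<E}$ with a half-open interval having $p$ as its endpoint, $p$ being moreover the limit of a \emph{unique} end. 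Gluing these two half-neighbourhoods along $\M_\Gamma$ exhibits a neighbourhood of $p$ in $\ol{\M}_2^{<E}(\ul{L},\ul{D})$ homeomorphic to an open interval: $p$ is an interior manifold point of the cell complex, incident to exactly two one-cells. No marking is forgotten when collapsing or making non-zero a boundary node, since at these strata every disk bubble is non-constant (an uncrowded type with a ghost disk bubble would force a second marking on it, or a tangency of order $\ge 2$ at the connecting node, landing in a stratum of negative expected dimension, hence empty by Theorem~\ref{main}); so the two incident one-cells carry equal weights $(|\Gamma|!)^{-1}$. Finally, by the coherence of the orientations in Theorem~\ref{main}(c) the two incident one-cells induce opposite orientations on the local $\R$, so their contributions to the coefficient of $p$ are $+(|\Gamma|!)^{-1}$ and $-(|\Gamma|!)^{-1}$ and cancel. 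Hence fake boundary points do not appear in $\delta[\ol{\M}_2^{<E}(\ul{L},\ul{D})]$.

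\textbf{True boundary points.} Let $p\in\M_\Gamma^{<E}(\ul{L},\ul{D})$ with $\Gamma\in\T$, so the domain has an edge of infinite length. Here the only type $\Gamma'>\Gamma$ yielding a one-dimensional stratum is obtained by (Making that edge finite), which by Theorem~\ref{main}(b) is a codimension-one corner with trivial ($\R_{\ge 0}$) normal bundle; the gluing theorem for broken trajectories (resp.\ for a disk bubble at infinite distance) in the proof of Theorem~\ref{main}(b) shows that $p$ is the limit of a \emph{unique} end of the corresponding one-cell $c_p$, so $p$ has a neighbourhood in $\ol{\M}_2^{<E}(\ul{L},\ul{D})$ homeomorphic to $[0,1)$ and is incident only to $c_p$. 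Again no marking is forgotten, so the weight of $c_p$ is $(|\Gamma|!)^{-1}$, and the endpoint sign of $c_p$ at $p$ is, by Theorem~\ref{main}(c)(ii), precisely the sign $\eps([u])$ comparing the boundary orientation $\R\oplus T\M_\Gamma^{<E}(\ul{L},\ul{D})$ with the canonical orientation of a point. Thus $p$ occurs in $\delta[\ol{\M}_2^{<E}(\ul{L},\ul{D})]$ with coefficient $\sigma([u])\eps([u])=(|\Gamma|!)^{-1}\eps([u])$, which is exactly its coefficient in $[\ol{\M}_\Gamma^{<E}(\ul{L},\ul{D})]$. Summing over all $0$-cells and regrouping by type gives $\delta[\ol{\M}_2^{<E}(\ul{L},\ul{D})]=\sum_{\Gamma\in\T}[\ol{\M}_\Gamma^{<E}(\ul{L},\ul{D})]$, as claimed.

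\textbf{Main obstacle.} The compactness/finiteness input and the enumeration of incident one-cells are handed to us by Theorems~\ref{compthm}, \ref{main} and Proposition~\ref{zeroone}, and are routine. The delicate part is the orientation bookkeeping: one must check that the coherence clause of Theorem~\ref{main}(c) genuinely forces the two one-cells meeting at a fake boundary point to induce opposite orientations on the local $\R$ (so that the equal weights cancel rather than add), and that the endpoint sign produced by the cellular boundary at a true boundary point coincides on the nose with the sign $\eps([u])$ entering the definition of $[\ol{\M}_\Gamma^{<E}(\ul{L},\ul{D})]$; this amounts to unwinding the ``universal sign depending only on $\Gamma,\Gamma'$'' attached to the (Collapsing an edge) and (Making an edge finite or non-zero) morphisms. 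A secondary, but genuinely routine, point is the constancy of the $(|\Gamma|!)^{-1}$ weights across each $0$-cell, which follows from the absence of ghost disk bubbles at strata of expected dimension $\le 1$ noted above.
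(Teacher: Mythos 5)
Your treatment of the true boundary strata matches the paper's, and your overall plan (cellular computation of $\delta$, separate analysis of fake and true vertices) is the right one. But there is a genuine gap in the fake-boundary argument. You assert that ``No marking is forgotten when collapsing or making non-zero a boundary node,\ \ldots\ so the two incident one-cells carry equal weights $(|\Gamma|!)^{-1}$,'' and you derive this from non-constancy of the disk bubble. This conflates two different forgetting phenomena. The theorem is stated for $\ul{D}=(D,D_0,D_1)$ in the sense of Remark \ref{marginal}: a disk bubble in $C_b$ carries its own interior markings $z_i^b$ required to lie on $D_b$, and when the edge length reaches $0$ the equivalence $\sim$ of \eqref{glue} deletes exactly those $z_i^b$ (they are ``on components at distance $0$ from a strip component''). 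This deletion has nothing to do with the bubble being a ghost. Since $D_b$ is stabilizing for $L_b$ and the bubble is non-constant, $n_b\ge 1$ is forced, so the types $\Gamma^+$ (positive length, with the $z_i^b$) and $\Gamma^-$ (collapsed, without them) carry the distinct weights $(n!n_b!)^{-1}$ and $(n!)^{-1}$; moreover the forgetful map $\M_{\Gamma^+}\to\M_{\Gamma^-}$ is $n_b!$--to--one, so a fake vertex is incident to $n_b!+1$ one-cell ends, not two. The paper's cancellation is therefore not ``equal weights $\times$ opposite signs'' but rather ``$n_b!$ preimages each weighted $(n!n_b!)^{-1}$ balance the single $\Gamma^-$ end weighted $(n!)^{-1}$'' --- this multiplicity compensation is precisely what the $(|\Gamma|!)^{-1}$ normalization in the definition of the fundamental class was designed to effect. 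Your argument as written would only be correct when $n_b=1$, or in a single-divisor variant of the theory where the $z_i^b$ do not exist; for $n_b\ge 2$ it produces a mismatch of $n_b!$ between the two sides and the cancellation fails.
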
 

\begin{proof}
By Proposition \ref{zeroone} the boundary $ \delta [\ol{\M}_{1}^{<
    E}(\ul{L},\ul{D}) ]$ is a sum of contributions from
combinatorial types corresponding to fake and true boundary
components.  Any fake boundary component $\Gamma$ appearing in the
boundary
$$ \delta [\ol{\M}_{1}^{< E}(\ul{L},\ul{D}) ] \in H(\ol{\M}_{1}^{<
  E}(\ul{L},\ul{D}), \M_{1}^{< E}(\ul{L},\ul{D})_0,\Q) $$
is in the boundary of two combinatorial types corresponding to cells
of maximal dimension by the tubular neighborhood part of Theorem
\ref{main}, up to forgetful equivalence.  That is, the fake boundary
strata corresponds to the morphism in which the intersection points
$z_i^b$ with the divisor $D_b$ for some $b \in \{ 0 , 1 \}$ are
forgotten.

Let $\Gamma^+$ (resp. $\Gamma^-$) be the fake boundary type so that
$\Gamma_+$ has $\ul{n} = (n, n_b)$ markings mapping to $D$ and $D_b$
and $\Gamma_-$ has $n$ markings mapping to $D$.  The fiber of the
forgetful morphism ${\M}_{\Gamma^+}(\ul{L},\ul{D}) \to
\M_{\Gamma^-}(\ul{L},\ul{D})$ has order 
$$|\Gamma_+|!/|\Gamma_-|! =
\frac{n!n_b!}{n!} = n_b! ,$$ 
corresponding to ways of ordering the additional marking.  Hence
$$ (|\Gamma^+|!)^{-1} [{\M}_{\Gamma^+}(\ul{L},\ul{D})] -
(|\Gamma^-|!)^{-1} [\M_{\Gamma^-}(\ul{L},\ul{D})] = 0 .$$
So the contributions of these types to $ \delta [\ol{\M}_{1}^{< E}(\ul{L},\ul{D})
]$ cancel.
\end{proof}

Let $\partial \ol{\M}_{1}^{< E}(\ul{L},\ul{D})$ denote the {\em true 
  boundary} given as the union of true boundary types
$$ \partial \ol{\M}_{1}^{< E}(\ul{L},\ul{D}) = \bigcup_{\Gamma \in \T}
\ol{\M}^{< E}_\Gamma(\ul{L},\ul{D}) .$$

\begin{corollary}   
The one-dimensional fundamental class $[ \ol{\M}_{1}^{<
    E}(\ul{L},\ul{D})]$ lifts to an element in the homology relative
to the true boundary $ H_1(\ol{\M}_{1}^{< E}(\ul{L},\ul{D}), \partial
\ol{\M}_{1}^{< E}(\ul{L},\ul{D}),\Q)$.
\end{corollary}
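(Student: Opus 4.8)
The plan is to deduce this directly from Theorem~\ref{fundclass3} by a purely homological argument with the long exact sequence of a triple. First I would record that, by Proposition~\ref{zeroone}(c), the true boundary $\partial\ol{\M}_2^{<E}(\ul{L},\ul{D})=\bigcup_{\Gamma\in\T}\ol{\M}_\Gamma^{<E}(\ul{L},\ul{D})$ is a union of zero-dimensional strata and hence sits inside the $0$-skeleton $\M_2^{<E}(\ul{L},\ul{D})_0$ as a subcomplex; under the finiteness assumption already in force, both are finite discrete sets. Abbreviate $P=\ol{\M}_2^{<E}(\ul{L},\ul{D})$, $B=\M_2^{<E}(\ul{L},\ul{D})_0$ and $A=\partial\ol{\M}_2^{<E}(\ul{L},\ul{D})$, so that $A\subset B\subset P$, and write $[P]\in H_1(P,B;\Q)$ for the rational fundamental class of the index-two component. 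I would then run the homology exact sequence of the triple $(P,B,A)$.

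The key observation is that $H_1(B,A;\Q)=0$. Since $A\hookrightarrow B$ is an injection of discrete sets, $H_0(A;\Q)\to H_0(B;\Q)$ is injective, and for a pair of $0$-dimensional complexes one has $H_1(B,A;\Q)\cong\ker\bigl(H_0(A;\Q)\to H_0(B;\Q)\bigr)=0$. Consequently the natural map $j_*\colon H_1(P,A;\Q)\to H_1(P,B;\Q)$ is injective, and by exactness of the triple sequence the class $[P]$ lifts — uniquely — to an element of $H_1(P,A;\Q)$ if and only if its image under the triple connecting homomorphism $\widetilde\partial\colon H_1(P,B;\Q)\to H_0(B,A;\Q)$ vanishes.

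To check that vanishing I would invoke naturality of connecting homomorphisms: $\widetilde\partial$ factors as $H_1(P,B;\Q)\xrightarrow{\ \delta\ }H_0(B;\Q)\xrightarrow{\ \pi_*\ }H_0(B,A;\Q)$, where $\delta$ is the boundary map of the pair $(P,B)$ — precisely the map appearing in Theorem~\ref{fundclass3} — and $\pi_*$ is induced by the inclusion of pairs $(B,\emptyset)\hookrightarrow(B,A)$. By Theorem~\ref{fundclass3}, $\delta[P]=\sum_{\Gamma\in\T}[\ol{\M}_\Gamma^{<E}(\ul{L},\ul{D})]$, which is a $0$-cycle supported on $A$, i.e.\ lies in the image of $H_0(A;\Q)\to H_0(B;\Q)$; hence $\pi_*$ annihilates it, so $\widetilde\partial[P]=0$ and the required lift exists.

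There is essentially no analytic content left to supply: the cell-complex structure and the description of the $0$-skeleton in terms of true and fake boundary strata come from Proposition~\ref{zeroone}, and the computation of $\delta[P]$ is exactly Theorem~\ref{fundclass3}. The only points I would need to write out carefully are the identification of $A$ as a subcomplex of $B$ compatible with the chosen cell structures, and the naturality square relating the two connecting maps; neither amounts to more than bookkeeping, so I do not expect a serious obstacle here.
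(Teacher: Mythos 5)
Your argument is correct and is essentially the paper's proof: both run the long exact sequence of the triple $(\,\ol{\M}_2^{<E},\M_2^{<E}{}_0,\partial\ol{\M}_2^{<E}\,)$ and observe that, by Theorem \ref{fundclass3}, $\delta[\ol{\M}_2^{<E}]$ is a $0$-cycle supported on the true boundary and therefore dies in $H_0(\M_2^{<E}{}_0,\partial\ol{\M}_2^{<E};\Q)$. The only extra content you supply is the remark $H_1(B,A;\Q)=0$, which gives uniqueness of the lift but is not needed for the statement as written.
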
 

\begin{proof} 
Consider the long exact sequence in relative homology
\begin{multline} 
 H_1(\ol{\M}_{1}^{< E}(\ul{L},\ul{D}), \partial \ol{\M}_{1}^{< E}(\ul{L},\ul{D}),\Q) \to
 H_1(\ol{\M}_{1}^{< E}(\ul{L},\ul{D}), \M_{1}^{< E}(\ul{L},\ul{D})_0,\Q) \\ \to
 H_0( \M_{1}^{< E}(\ul{L},\ul{D})_0, \partial \ol{\M}_{1}^{< E}(\ul{L},\ul{D}),
 \Q)
 \cong H_0( \M_{1}^{< E}(\ul{L},\ul{D})_0 - \partial \ol{\M}_{1}^{<
   E}(\ul{L},\ul{D}),\Q)  .\end{multline}
The final isomorphism is by excision.  The image of $[ \ol{\M}_{1}^{<
E}(\ul{L},\ul{D})]$ in relative homology
$H_0( \M_{1}^{< E}(\ul{L},\ul{D})_0, \partial \ol{\M}_{1}^{< E}(\ul{L},\ul{D}),
 \Q)$ 
vanishes by Theorem \ref{fundclass3}, hence the corollary.
\end{proof}

\begin{proposition}  \label{product} {\rm (Product axiom for fundamental classes)}  
If $\Gamma$ is the combinatorial type of a true boundary component
corresponding to a strip connecting node of infinite length and
$\Gamma_1,\Gamma_2$ are the trees obtained by cutting the single edge
of $\Gamma$ and the incoming and outgoing markings do not lie on the
same disk component then $ [{\M}_{\Gamma}(\ul{L},\ul{D})]$ is the
image of $[{\M}_{\Gamma_1}(\ul{L},\ul{D})] \times
[{\M}_{\Gamma_2}(\ul{L},\ul{D})] $ under the isomorphism of moduli
spaces ${\M}_{\Gamma}(\ul{L},\ul{D}) \to
{\M}_{\Gamma_1}(\ul{L},\ul{D}) \times_{\cI(L_0,L_1)}
{\M}_{\Gamma_2}(\ul{L},\ul{D}) $ and the restriction map $H_0(
{\M}_{\Gamma_1}(\ul{L},\ul{D}) \times
{\M}_{\Gamma_2}(\ul{L},\ul{D})) \to H_0(
{\M}_{\Gamma_1}(\ul{L},\ul{D}) \times_{\cI(L_0,L_1)}
{\M}_{\Gamma_2}(\ul{L},\ul{D})) $ that sends $0$-cycles not in
${\M}_{\Gamma_1}(\ul{L},\ul{D}) \times_{\cI(L_0,L_1)}
{\M}_{\Gamma_2}(\ul{L},\ul{D})$ to $0$.
\end{proposition}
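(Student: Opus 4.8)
The plan is to reduce the statement to an identity between finite, signed, rationally-weighted collections of points, exploiting that an index two \emph{true} boundary type has a zero-dimensional stratum. First I would recall from the Remark following Proposition~\ref{bmmaps} that cutting the unique strip-connecting edge of $\Gamma$ produces the concatenation map $\sharp$ of \eqref{concat}, and that this map is an isomorphism of $\M_{\Gamma_1}(\ul{L},\ul{D}) \times_{\cI(L_0,L_1)} \M_{\Gamma_2}(\ul{L},\ul{D})$ onto $\M_\Gamma(\ul{L},\ul{D})$; label the pieces so that $\Gamma_1$ carries the incoming marking. By Theorem~\ref{main}(a) the stratum $\M_\Gamma(\ul{L},\ul{D})$ is smooth of expected dimension zero, so by Theorem~\ref{compthm} (for a fixed energy bound) it is a finite set of oriented points, and likewise each index one stratum $\M_{\Gamma_a}(\ul{L},\ul{D})$ and the set $\cI(L_0,L_1)$ are finite. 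Since $\cI(L_0,L_1)$ is discrete, the matching condition $\ev_1 = \ev_2$ is locally constant on $\M_{\Gamma_1}(\ul{L},\ul{D})\times\M_{\Gamma_2}(\ul{L},\ul{D})$, so the fiber product is a union of connected components of the product; hence the ``restriction map'' $H_0(\M_{\Gamma_1}(\ul{L},\ul{D})\times\M_{\Gamma_2}(\ul{L},\ul{D})) \to H_0(\M_{\Gamma_1}(\ul{L},\ul{D})\times_{\cI(L_0,L_1)}\M_{\Gamma_2}(\ul{L},\ul{D}))$ in the statement is the excision map $H_0(Y)\to H_0(Y,Y\setminus Z)\cong H_0(Z)$ and literally retains the matching points while discarding the rest.

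Next I would run the computation. Writing $[\M_{\Gamma_a}(\ul{L},\ul{D})] = \sum_{[u_a]} \sigma([u_a])\,\eps([u_a])\,[u_a]$ for $a=1,2$, the homological cross product is
\[
[\M_{\Gamma_1}(\ul{L},\ul{D})] \times [\M_{\Gamma_2}(\ul{L},\ul{D})] = \sum_{[u_1],[u_2]} \sigma([u_1])\sigma([u_2])\,\eps([u_1])\eps([u_2])\,[u_1\times u_2],
\]
and applying the restriction map annihilates every term with $\ev_1([u_1])\neq \ev_2([u_2])$, leaving the sum over matching pairs. Transporting along the isomorphism $\sharp$ and comparing term-by-term with $[\M_\Gamma(\ul{L},\ul{D})] = \sum_{[u]} \sigma([u])\,\eps([u])\,[u]$, the proposition is equivalent to the two pointwise identities $\eps(\sharp([u_1],[u_2])) = \eps([u_1])\eps([u_2])$ and $\sigma(\sharp([u_1],[u_2])) = \sigma([u_1])\sigma([u_2])$ over matching pairs.

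The orientation identity is exactly the content of Theorem~\ref{main}(c)(i): the Behrend--Manin cutting isomorphism $\M_{\Gamma_1\sqcup\Gamma_2}(\ul{L},\ul{D}) \to \M_\Gamma(\ul{L},\ul{D})$ is orientation preserving, and a disconnected type carries the product orientation with the component containing the incoming marking ordered first, so on zero-dimensional strata the signs simply multiply. The weight identity follows from the definition of $\sigma$ in the Fundamental classes Definition, the relevant point being that the interior markings of $\Gamma$ of each of the three types are the disjoint union of the corresponding markings of $\Gamma_1$ and $\Gamma_2$, so the normalization factor attached to a point of $\M_\Gamma$ is the product of those attached to its two index one constituents. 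I expect the one place requiring genuine care to be this bookkeeping: verifying that the concatenation map $\sharp$ of \eqref{concat} coincides, under the canonical identification $\M_{\Gamma_1\sqcup\Gamma_2}(\ul{L},\ul{D})\cong\M_{\Gamma_1}(\ul{L},\ul{D})\times\M_{\Gamma_2}(\ul{L},\ul{D})$, with the cutting isomorphism of Theorem~\ref{main}(c)(i), that the component-ordering convention ($\Gamma_1$ first) is respected, and that the rational weight normalization factors compatibly; once these conventions are pinned down the remainder of the argument is formal.
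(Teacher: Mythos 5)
Your overall approach matches the paper's: the paper's own proof is a two-sentence citation of the fiber-product description of $\M_\Gamma(\ul{L},\ul{D})$ from Proposition~\ref{bmmaps} and the orientation compatibility of the (Cutting edges) morphism from Theorem~\ref{main}(c), and your proposal correctly identifies these as the key inputs and makes the bijection $\sharp$ and the sign comparison explicit. The reduction to finite oriented point sets and the observation that the fiber product is a union of connected components (so the ``restriction map'' is an excision map) are both sound.

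The gap is in your weight argument. You assert that $\sigma(\sharp([u_1],[u_2])) = \sigma([u_1])\sigma([u_2])$ ``since the interior markings of $\Gamma$ are the disjoint union of those of $\Gamma_1$ and $\Gamma_2$,'' but this argument commits the fallacy $(a+b)! = a!\,b!$. With $\sigma([u]) = (|\Gamma|!)^{-1} = (n!\,n_0!\,n_1!)^{-1}$ and $|\Gamma| = |\Gamma_1| + |\Gamma_2|$ componentwise, one in fact has
$$\frac{\sigma(\sharp([u_1],[u_2]))}{\sigma([u_1])\,\sigma([u_2])} = \frac{|\Gamma_1|!\,|\Gamma_2|!}{|\Gamma|!} = \left[\binom{n}{n^{(1)}}\binom{n_0}{n_0^{(1)}}\binom{n_1}{n_1^{(1)}}\right]^{-1},$$
which is not $1$ as soon as markings of a single divisor type lie on both sides of the cut. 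So the pointwise weight identity is false, and your proof does not close. Notice that the paper itself does not address the weights in the proof of this Proposition: the actual bookkeeping happens in the later proof that $\partial^2=0$, where each concatenated pair $([u_1],[u_2])$ is matched against $(|\Gamma_1|+|\Gamma_2|)!/(|\Gamma_1|!\,|\Gamma_2|!)$ distinct marking relabelings when summing over \emph{all} types $\Gamma$, and it is only this sum over labeled types that recovers the product $\sigma([u_1])\sigma([u_2])$. What Proposition~\ref{product} is really used for downstream is the compatibility of signs and energies, $\eps(\sharp([u_1],[u_2])) = \eps([u_1])\eps([u_2])$ and $E(\sharp([u_1],[u_2])) = E([u_1])+E([u_2])$, which your argument via Theorem~\ref{main}(c)(i) handles correctly. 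If you want to defend the Proposition's statement about rational fundamental classes for a single fixed labeled type $\Gamma$, you would need to either carry the multinomial correction factor explicitly or reformulate the claim as a sum over all labelings of $\Gamma$.
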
 

\begin{proof} 
By the (Cutting edges) and (Product) axiom for domain-dependent almost
complex structures in Proposition \ref{bmmaps},
${\M}_{\Gamma}(\ul{L},\ul{D}) $ is the fiber product of the moduli
spaces ${\M}_{\Gamma_1}(\ul{L},\ul{D})$ and
${\M}_{\Gamma_2}(\ul{L},\ul{D}) $ over $\cI(L_0,L_1)$.  Orientations
are compatible with the (Cutting edges) morphism by Theorem
\ref{main}, hence the statement of the Proposition.
 \end{proof}

\subsection{Floer operator} 

The Floer operator is defined by a count of elements in the zero-dimensional component
of the moduli space of Floer trajectories:

\begin{definition}  {\rm (Floer operator)} Given $x_+ \in \cI(L_0,L_1)$ define
$\partial \bra{x_+} \in CF(L_0,L_1)$ as the sum
\begin{equation} \label{coboundary} 
 \partial \bra{x_+} =
 \sum_{[u] \in
   {\M}_0(\ul{L},\ul{D},x_+,x_-)} \eps([u]) q^{E([u])}
 \sigma([u]) \bra{x_-} 
\end{equation}
and extend to $\partial : CF(L_0,L_1) \to CF(L_0,L_1)$ by linearity.
\end{definition} 

In general the Floer operator fails to square to zero because of
configurations involving disk bubbles; in Fukaya-Oh-Ohta-Ono
\cite{fooo} the Lagrangian Floer complex is called {\em obstructed} in
this case.  In general the operator $\partial$ is part of an
\ainfty-bimodule structure on $CF(L_0,L_1)$ in \cite{fooo}.  In the
case of admissible Lagrangians in Definition \ref{admissible}, in the
first case we assume that $J_\Gamma$ is $\iota_b$-anti-invariant on
$\ell_b^{-1}(\infty)$ as in Remark \ref{antiinv}, while in the second
we assume that $J_\Gamma$ is spherical on $\ell_b^{-1}(\infty)$ as in
Remark \ref{spheres}.

\begin{theorem} \label{squarezero} Suppose that $L_0,L_1$ are
  admissible Lagrangian branes as in Definition \ref{admissible}.  For
  regular, coherent, stabilizing, ghost-marking-independent
  collections of perturbation data, the Floer coboundary operator
  $\partial$ is well-defined and satisfies $\partial^2 = 0$.
\end{theorem} 

\begin{proof}
Since the zero-dimensional component of the moduli space is a finite
set of points for each energy bound by the first part of Proposition
\ref{zeroone}, the sum in \eqref{coboundary} is well-defined.  The
boundary of the fundamental class on the one-dimensional component of
the moduli space is a sum of points whose sum of coefficients is zero,
since two points with opposite coefficients occur for each one-cell.
The contributions corresponding to fake boundary types cancel by
Theorem \ref{fundclass3}.  It follows that the sum of coefficients of
the true boundary types also vanishes:
\begin{equation} \label{zerosout} 0 = \sum_{\Gamma \in \T} \sum_{ [u] \in
  {\M}_{\Gamma}(\ul{L},\ul{D},x_+,x_-)} \sigma([u]) q^{E([u])}
\eps([u])
\end{equation} 
where $\T$ is the set of true boundary types.  By Theorem \ref{main}
(for fixed point sets of anti-symplectic involutions) and Remark
\ref{spheres2} (for graphs of symplectomorphisms) the contributions
from the boundary components corresponding to disk bubbles at infinity
on $L_0$ or $L_1$ cancel in the right-hand of \eqref{zerosout}, so
that the contributions corresponding to breaking of Floer trajectories
also satisfy \eqref{zerosout}.  Each
$[u] \in {\M}_{\Gamma}(\ul{L},\ul{D},x_+,x_-)$ above can be written as
a concatenation $[u_1] \# [u_2]$ of stable Floer trajectories
$[u_1] \in \M_{\Gamma_1}(\ul{L},\ul{D},x_+,y),[u_2] \in
\M_{\Gamma_2}(\ul{L},\ul{D},y,x_-)$
for some $y \in \cI(L_0,L_1)$, as in Proposition \ref{product} and
\eqref{concat}, where each $\Gamma_k$ has one vertex for $k = 1,2$.
Because of the additional possibilities in re-ordering the markings of
each type, each glued trajectory $u_1 \# u_2$ corresponds to
$$(|\Gamma_1| + |\Gamma_2|)!/ |\Gamma_1|!|\Gamma_2|! = 
\sigma([u_1 \# u_2]) / \sigma([u_1]) \sigma([u_2]) $$ 
 trajectories in $\M_{\Gamma}(\ul{L},\ul{D})$.  As in \ref{product} the
 orientations and energies satisfy 
$$ \eps([u_1 \# u_2]) = \eps([u_1])
 \eps([u_2]), \quad E([u_1 \# u_2]) =
 E([u_1]) + E([u_2]) .$$  
Hence for $x_+ \in \cI(L_0,L_1)$,
\begin{eqnarray*}
 \partial^2 \bra{x_+}
 &=& 
 \sum_{\substack{ [\Gamma_1],[\Gamma_2], x_-,y \in \cI(L_0,L_1) \\ [u_1] \in
     {\M}_{\Gamma_1}(\ul{L},\ul{D},x_+,y) \\ [u_2] \in
     \M_{\Gamma_2}(\ul{L},\ul{D},y,x_-)}} \sigma([u_1]) \sigma([u_2])
 q^{E([u_1]) + E([u_2])} \eps([u_1]) \eps([u_2]) \bra{x_-} \\ 
&=& \sum_{\substack{ [\Gamma] \in \cT,x_- \in \cI(L_0,L_1),\\ [u_1 \# u_2] \in
     \M_\Gamma(\ul{L},\ul{D},x_+,x_-)}} \sigma([u_1] \# [u_2]) q^{E([u_1 \#
   u_2])} \eps([u_1 \# u_2]) \bra{x_-} \ \ = \ 0 
 \end{eqnarray*}
as claimed.
\end{proof} 

\begin{corollary} Let $\phi: X \to X$ be a non-degenerate Hamiltonian diffeomorphism
generated by the time-one flow of a time-dependent Hamiltonian $H \in
C^\infty(\R \times X)$.  The boundary operator $\partial$ for $CF(H)
:= CF(\Delta, (1 \times \phi) \Delta)$ satisfies $\partial^2 = 0$,
hence the Floer cohomology $HF(H) = \on{ker}(\partial)/
\on{im}(\partial)$ is well-defined.
\end{corollary}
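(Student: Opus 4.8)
The plan is to deduce this directly from the preceding theorem by taking $L_0 = \Delta$ and $L_1 = (1 \times \phi)\Delta$ inside the product symplectic manifold $X^- \times X$, so that the work consists of verifying that this pair satisfies every standing hypothesis. First I would note that $X^- \times X$ is rational because $X$ is, and that $\Delta$ and $(1\times\phi)\Delta$ are strongly rational: this follows from the (Duals), (Diagonals), (Hamiltonian perturbations) and (Graphs of Hamiltonian diffeomorphisms) Examples of Section \ref{stab} (with $\phi_0 = \mathrm{id}$, $\phi_1 = \phi$), and their disjoint union is strongly rational by the (Disjoint unions)/(Products) Examples. Consequently Lemma \ref{auroux} together with Remark \ref{marginal} provides a collection $\ul{D} = (D, D_0, D_1)$ of (weakly, or in the exact case strictly) stabilizing divisors, with $D$ stabilizing for $L_0 \cup L_1$ and $D_b$ stabilizing for $L_b$. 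The pair also carries relative Pin/spin structures, as is standard for the diagonal, so that the orientations of Theorem \ref{main} are available.

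Next I would produce the anti-symplectic involutions: $\iota_0(x_1,x_2) = (x_2,x_1)$ and $\iota_1(x_1,x_2) = (\phi^{-1}(x_2),\phi(x_1))$ reverse the product symplectic form on $X^- \times X$ and have fixed loci $\mathrm{Fix}(\iota_0) = \Delta$, $\mathrm{Fix}(\iota_1) = (1\times\phi)\Delta$ (using that $\phi$ is symplectic). Since $\ti{X}^- \boxtimes \ti{X}$ restricts to a flat trivial bundle over each $L_b$, the involution $\iota_b$ lifts, and by Remark \ref{anti} (via Gayet \cite{gayet:re}, or Bertini in the projective case) we may take $D_b$ to be $\iota_b$-invariant. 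Then Theorems \ref{compthm} and \ref{main} and Remark \ref{anti2} apply: we choose perturbation data $J_\Gamma$ that are coherent, regular, stabilized, and $\iota_b$-anti-invariant on $\ell_b^{-1}(\infty)$ for $b \in \{0,1\}$. The crucial orientation input, that the involution $\iota_b$ is orientation reversing on any index-two stratum with a single disk bubble at an infinite-length node in $L_b$, is exactly Proposition \ref{diagonal} (applied once with $\phi_0 = \mathrm{id}$ and once with $\phi_1 = \phi$). Thus all hypotheses of the theorem hold, and we conclude $\partial^2 = 0$ on $CF(\Delta, (1\times\phi)\Delta) = CF(H)$.

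Finally, non-degeneracy of $\phi$ is precisely the statement that $\cI(\Delta, (1\times\phi)\Delta) = \{(x,x) : \phi(x)=x\} \cong \mathrm{Fix}(\phi)$ is a transverse, hence finite, intersection, so $CF(H)$ is the expected finitely generated free Novikov module generated by the fixed points of the time-one flow of $H$ (the identification with the Hamiltonian-perturbed picture being the correspondence recalled after the definition of $\ol{\M}(L_0,L_1)$), and $HF(H) = \ker(\partial)/\mathrm{im}(\partial)$ is well-defined. I expect the only step needing genuine care, rather than bookkeeping, to be arranging perturbation data that are simultaneously coherent, regular, stabilized, and $(\iota_0,\iota_1)$-anti-invariant; this is handled as in Remark \ref{anti2}, using that the two anti-invariance constraints are imposed over the disjoint families of components $\ell_0^{-1}(\infty)$ and $\ell_1^{-1}(\infty)$ and that each constraint still cuts out a comeager set of admissible choices, so their intersection remains comeager.
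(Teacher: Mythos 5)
Your proposal is correct and follows exactly the route the paper intends: the corollary is an immediate specialization of the preceding theorem to $L_0 = \Delta$, $L_1 = (1\times\phi)\Delta$ and the anti-symplectic involutions $\iota_0(x_1,x_2) = (x_2,x_1)$, $\iota_1(x_1,x_2) = (\phi^{-1}(x_2),\phi(x_1))$, with the orientation-reversal input supplied by Proposition~\ref{diagonal}. One small slip worth noting: $\Delta$ and $(1\times\phi)\Delta$ are \emph{not} disjoint (they intersect exactly along $\{(x,x)\mid \phi(x)=x\}$, which non-degeneracy makes finite and transverse), so the (Disjoint unions) Example cannot be cited for strong rationality of $L_0 \cup L_1$; instead one should invoke the (Diagonals) and (Hamiltonian perturbations) Examples for each $L_b$ separately and then the matching of trivializing sections at intersection points as handled in Lemma~\ref{auroux}(b) or the Remark following the strong rationality examples, which is automatic here since the prequantum lift $\ti{\phi}$ carries the trivialization of $\ti{X}^-\boxtimes\ti{X}$ over $\Delta$ to the one over $(1\times\phi)\Delta$.
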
 

\subsection{Clean intersections} 

Recall that a pair $L_0,L_1 \subset X$ of submanifolds intersect {\em
  cleanly} if $L_0 \cap L_1$ is a smooth manifold and $ T(L_0 \cap
L_1) = TL_0 \cap TL_1 .$ Floer homology for clean intersections was
constructed in Pozniak \cite{po:cl} and also Schm\"aschke
\cite[Section 7]{schmaschke} under certain monotonicity
assumptions. In this section we show how to extend the monotone
results to the case that the union of the cleanly-intersecting
Lagrangians is rational using stabilizing divsiors.  We have in mind
especially the case that the two Lagrangians are rational and equal.
In the particular case of diagonal boundary conditions, we show that
the Floer cohomology is the singular cohomology with Novikov
coefficients.

The definition of Floer cohomology in the clean intersection case is a
cout of configurations of holomorphic strips, disks, spheres, and
Morse trees as in, for example, Biran-Cornea \cite[Section 4]{bc:ql}.
We suppose that $L_0 \cap L_1$ is clean and $L_0 \cup L_1$ is
rational, that is, some power of the line-bundle-with-connection
$\ti{X}$ is trivializable over $L_0 \cup L_1$. For example, if $L_0 =
L_1$ is rational then the intersection is clean and the union is
rational.  Let $F: L_0 \cap L_1 \to \R$ be a Morse function.  By the
Morse lemma, the critical set
$$\cI(L_0,L_1) := \crit(F) = \{l \in L_0 \cap L_1 \ | \d F(l) = 0 
\} $$
is necessarily finite.  Choose a generic metric $G$ on $L_0 \cap L_1$,
and let $\phi_t$ be the time $t$ flow of
$-\grad(F) \in \Vect(L_0 \cap L_1)$.  Denote the stable and unstable
manifolds of $F$:
$$ W_x^\pm = \left\{ l \in L_0 \cap L_1 \ | \lim_{t \to \pm \infty} 
\phi_t(l) = x \right\} .$$
The space of Floer cochains is then as before
$$ CF(L_0,L_1) =  \bigoplus_{x \in \cI(L_0,L_1)} \Lambda \bra{x} .$$

The Floer coboundary counts combinations of holomorphic disks and
gradient segments for the Morse function on the intersection.
We assume that $(F,G)$ is {\em Morse-Smale}, that is, the stable and
unstable manifolds meet transversally
$$ T_l W_x^+ + T_l  W_y^-  = T_l (L_0 \cap L_1), \quad 
\forall l\in W_x^+ \cap W_y^-, \ x,y \in \crit(F) .$$
Choose a compatible almost complex structure $J$ on $X$.  Given a
stable strip $C_0$ with boundary markings $z_-,z_+$ let
$w_1,\ldots,w_k \in C_0$ denote the nodes appearing in any
non-self-crossing path between $z_-$ and $z_+$.  Define a topological
space 
$$C =  C_0 \sqcup \cup_{i = 1}^k [0,\ell(w_i)] / \sim $$ 
by replacing each node $w_i$ by a segment $T_i \cong [0,\ell(w_i)]$ of
length $\ell(w_i)$.  Denote by 
$$T = T_1 \cup \ldots \cup T_k \quad S = \ol{C - T} $$ 
the {\em tree resp. surface part} of $C$.  A Floer trajectory is then
a map from $C = S \cup T$ that is $J$-holomorphic on the surface part
and a $F$-gradient trajectory on each segment in $T$.  See Figure
\ref{treedstrip}.

\begin{figure}[h!] 
\begin{picture}(0,0)%
\includegraphics{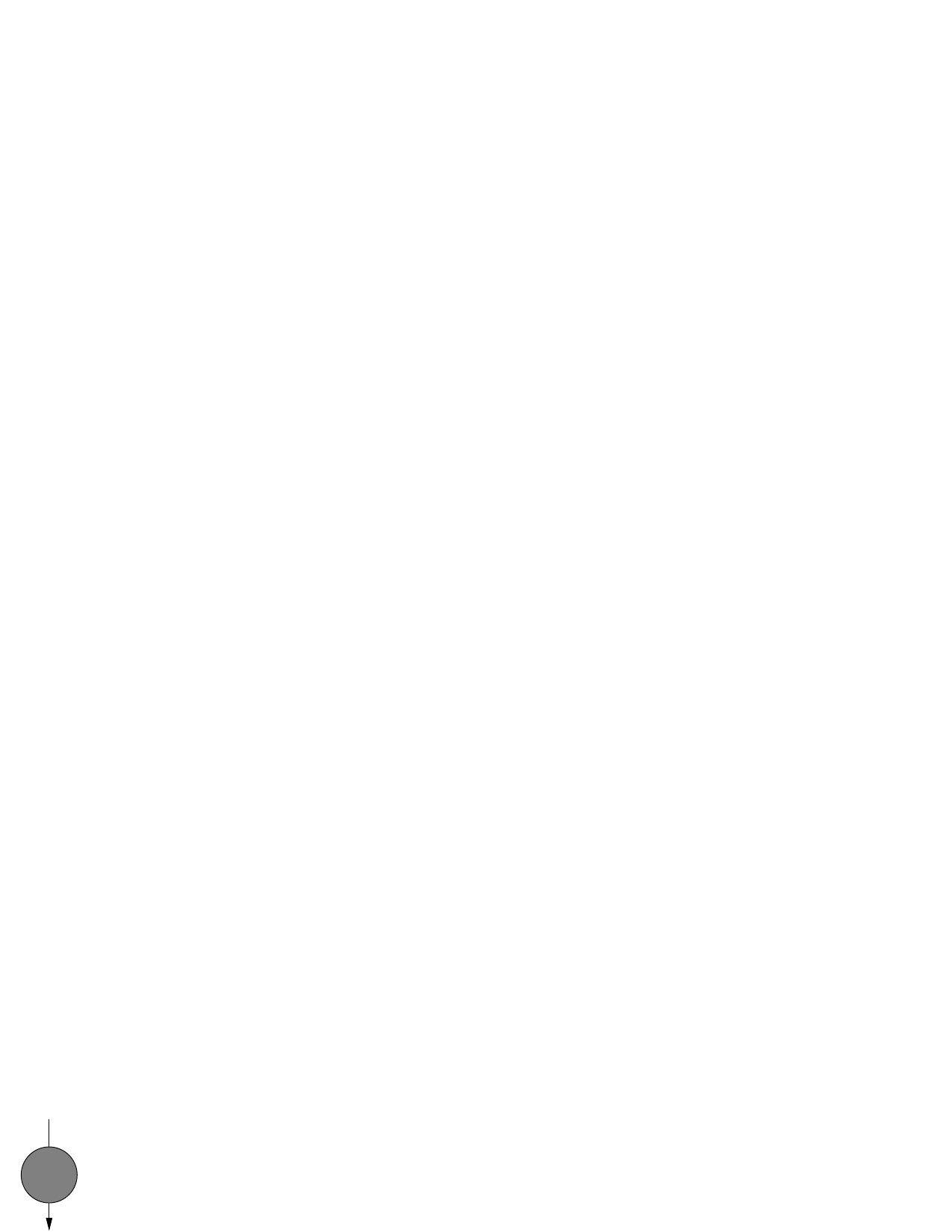}%
\end{picture}%
\setlength{\unitlength}{3947sp}%
\begingroup\makeatletter\ifx\SetFigFont\undefined%
\gdef\SetFigFont#1#2#3#4#5{%
  \reset@font\fontsize{#1}{#2pt}%
  \fontfamily{#3}\fontseries{#4}\fontshape{#5}%
  \selectfont}%
\fi\endgroup%
\begin{picture}(891,1224)(3974,-3373) 
\put(4557,-2414){\makebox(0,0)[lb]{\smash{{\SetFigFont{12}{14.4}{\rmdefault}{\mddefault}{\updefault}{$L_0 \cap L_1$}%
}}}}
\put(4570,-3279){\makebox(0,0)[lb]{\smash{{\SetFigFont{12}{14.4}{\rmdefault}{\mddefault}{\updefault}{$L_0 \cap L_1$}%
}}}}
\put(3989,-2800){\makebox(0,0)[lb]{\smash{{\SetFigFont{12}{14.4}{\rmdefault}{\mddefault}{\updefault}{$L_0$}%
}}}}
\put(4850,-2782){\makebox(0,0)[lb]{\smash{{\SetFigFont{12}{14.4}{\rmdefault}{\mddefault}{\updefault}{$L_1$}%
}}}}
\end{picture}%
\caption{A treed strip with Lagrangian boundary conditions}
\label{treedstrip}
\end{figure}

A perturbation datum for Floer trajectories consists of a perturbation 
of the Morse function and the almost complex structure.  The universal 
treed strip can be written as the union of one-dimensional and 
two-dimensional parts 
$$ \ol{{\U}}_\Gamma = \ol{{\S}}_\Gamma \cup \ol{{\T}}_\Gamma $$ 
so that $\ol{\S}_\Gamma \cap \ol{\T}_\Gamma$ is the set of points on
the boundary of the disks meeting the edges of the tree.  As before,
we fix {\em thin parts} of the universal curves: a neighborhood
$\ol{{\T}}_{\Gamma}^{\thin}$ of the endpoints and a neighborhood
$\ol{{\S}}_{\Gamma}^{\thin}$ of the markings and nodes.  In the
regularity construction, these neighborhoods must be small enough so
that either a given fiber is in a neighborhood of the boundary, where
transversality has already been achieved, or otherwise each segment
and each disk or sphere component in a fiber meets the complement of
the chosen thin parts.  For an integer $l \ge 0$ a {\em
  domain-dependent perturbation} of $F$ of class $C^l$ is a $C^l$ map
\begin{equation} \label{FGam}  F_{\Gamma}: \ol{{\T}}_{\Gamma}
 \times (L_0 \cap L_1) \to \R \end{equation}
equal to the given function $F$ away from the endpoints:
$$ F_\Gamma | \ol{{\T}}_{\Gamma}^{\thin} = 
\pi_2^* F $$
where $\pi_2$ is the projection on the second factor in \eqref{FGam}.
A {\em domain-dependent almost complex structure} of class $C^l$ for
treed disks of type $\Gamma$ is a map from the two-dimensional part
$\ol{{\S}}_{\Gamma} $ of the universal curve $\ol{{\U}}_{\Gamma}$ to $
\J_\tau(X)$ given by a $C^l$ map
$$ J_{\Gamma} : \ \ol{{\S}}_{\Gamma} \times X \to \End(TX) $$
equal to the given $J_D$ away from nodes and boundary:
$$ J_\Gamma | \ol{{\S}}_\Gamma^{\thin} = 
\pi_2^* J_D $$
and equal to the given stabilizing almost complex structure $J_D$ on
the boundary.  A {\em Floer trajectory} for the pair $(L_0,L_1)$
consists of a treed disk $C$ and a map $u: C = S \cup T \to X$ such
that
\vskip .1in
\begin{enumerate} 
\item[] {\rm (Boundary condition)} The Lagrangian boundary condition
  holds $u ( \partial S \cup T) \subset L_0 \cup L_1$.
\vskip .1in
\item[] {\rm (Surface equation)} On the surface part $S$ of $C$ the map
  $u$ is $J$-holomorphic for the given domain-dependent almost complex
  structure: if $j$ denotes the complex structure on $S$ then
$$ J_{\Gamma,u(z),z} \ \d u |_S = \d u |_S \ j. $$ 
\vskip .1in
\item[] {\rm (Boundary tree equation)} On the boundary tree part $T
  \subset C$ the map $u$ is a collection of gradient trajectories:
$$ \dds u |_T = -\grad_{ F_{\Gamma, (s,u(s))} }(u |_T) $$
where $s$ is a local coordinate with unit speed.  Thus for each edge
$e \in \Edge_{f}(\Gamma)$ the length of the trajectory is given by the
length $u |_{e \subset T}$ is equal to $\ell(e)$.
\end{enumerate} 
\noindent Given a stabilizing divisor $D \subset X - (L_0 \cup L_1)$,
one says that a stable trajectory $u:C \to X$ is {\em adapted} if and only if
\begin{enumerate} 
\item[] {\rm (Stable domain property)} $C$ is a stable marked strip;
  and 
%
\vskip .1in
\item[] {\rm (Marking property)} Each interior marking lies in
  $u^{-1}(D)$ and each component of $u^{-1}(D)$ contains an interior
  marking.
\end{enumerate} 
Let $\ol{\M}(\ul{L},D)$ denote the set of isomorphism classes of
stable $D$-adapted Floer trajectories to $X$, and by
$\M_\Gamma(\ul{L},D)$ the subspace of combinatorial type $\Gamma$.
Compactness and transversality properties of the moduli space of Floer
trajectories in the case of clean intersection, including exponential
decay estimates, can be found in \cite{wo:gdisk} and
\cite{schmaschke}. The necessary gluing result can be found in
Schm\"aschke \cite[Section 7]{schmaschke}.  The compactness and
transversality results for Floer trajectories allow the definition of
Floer cohomology by counted treed strips: as before in
\eqref{coboundary}, define
\begin{equation} \label{coboundary3} CF(L_0,L_1) = \bigoplus_{l \in
    \cI(L_0,L_1)} \Lambda l, \ \partial \bra{x_+} = \sum_{[u] \in
    {\M}_0(\ul{L},\ul{D},x_+,x_-)} \eps([u]) q^{E([u])} \sigma([u])
  \bra{x_-}.
\end{equation}

\begin{remark} {\rm (The case of equal Lagrangians)}  
  In the special case that the boundary conditions on the two sides
  are equal, the Floer operator may be identified with a count of
  treed holomorphic disks.  That is, let $L_0 = L_1 = L$.  By removal
  of singularities for pseudoholomorphic maps with Lagrangian boundary
  conditions \cite[Section 4.2]{ms:jh}, any holomorphic strip with
  boundary values in $(L,L)$ extends to a holomorphic disk with
  boundary values in $L$.  Thus the Floer operator counts
  configurations of treed holomorphic disks in $X$ with boundary in
  $L$ together with Morse flow lines in $L$.  The Fukaya algebra for
  an arbitrary such Lagrangian is constructed using stabilizing
  divisors in \cite{fuk}.
\end{remark} 

\begin{remark} {\rm (Floer cohomology for the diagonal)}   
\label{diagonal} Continuing Remark \ref{spheres2},
  consider the case of the diagonal Lagrangian. Let $L_0 = L_1 =
  \Delta$ where $\Delta \subset X = Y^- \times Y$ is the diagonal
  Lagrangian in a product symplectic manifold $Y^- \times Y$.  In this
  case, there are two natural regularization procedures involving
  stabilizing divisors.

  The first regularization procedure involves the interpretation of
  these configurations as holomorphic strips.  In this scheme, one
  chooses a Donaldson hypersurface $D \subset Y^- \times Y$ disjoint
  from the diagonal.  In particular, this means that the intersection
  points with the Donaldson hypersurface always intersect the interior
  of the strip. On the other hand, a strip with values in a product as
  above naturally defines a holomorphic cylinder, hence a holomorphic
  sphere by removal of singularities.  Any sphere in $Y$ may be
  regularized by choosing a Donaldson hypersurface in $Y$ and a
  domain-dependent almost complex structure on $Y$, as in Remark
  \ref{spheres}.  Let $\ti{\U}_\Gamma \to \ti{\M}_\Gamma$ denote the
  universal treed cylinder.  As in Remark \ref{spheres2}, there are
  forgetful maps
 $$ f_\Gamma: \M_\Gamma \to \ti{\M}_\Gamma, \quad \phi_\Gamma:
 \U_\Gamma \to \ti{\U}_\Gamma .$$
  The fiber of the forgetful map $f_\Gamma$ maybe described as
  follows.  Given a disk component $(C_i,\ul{w}_i,\ul{z}_i)$ with
  boundary special points $\ul{w}_i$ and interior special points
  $\ul{z}_i$, the corresponding sphere component
  $(\ti{C}_i, \ul{\ti{z}}_i)$ has interior special points arising from
  both boundary and interior special points on $C_i$.  If $C_i$ has at
  most two boundary points, then the dimension of the moduli space
  containing $(\ti{C}_i, \ul{\ti{z}}_i)$ is strictly greater than that
  containing $(C_i,\ul{w}_i,\ul{z}_i)$, since the difference in
  dimensions of the automorphism groups of the disk and sphere is
  three.  As explained before in Remark \ref{spheres2}, the fibers of
  the forgetful map are always positive dimensional.

Suppose regular perturbations have been chosen making uncrowded moduli
spaces of treed holomorphic cylinders regular.  By pulling back under
the forgetful maps, one obtains perturbations for strips making all
uncrowded moduli spaces of treed holomorphic cylinders regular.  As a
result, treed holomorphic cylinders with at least one cylinder can
never be isolated, because of the fiber $(S^1)^k, k \ge 1$ of the
projection map where $k$ is the number of cylinders.

This argument implies that the Floer operator, for this particular
perturbation scheme, is the same as the Morse operator.  So
$HF(\Delta,\Delta) = H(L;\Lambda)$ is the Morse cohomology with
Novikov coefficients.  In Section \ref{relative} we show that the
Floer cohomology is independent of the perturbation scheme and
Hamiltonian perturbation, as long as the perturbation is chosen so
that the intersection is clean.
\end{remark}

\subsection{Open invariants}

This section describes how the previous stabilization setting can be
used to define open Gromov-Witten invariants without relying on a
virtual perturbation setup as in Solomon \cite{solomon:thesis} or
Georgieva \cite{georgieva:orient}.  The open Gromov-Witten invariants
are defined for Lagrangians that are fixed point sets of
anti-symplectic involutions satisfying certain conditions.

We introduce the following notation for moduli spaces of stable treed
disks.  For integers $n\geq 1$ and $k \geq 0$, let
$\ol{\mathcal{M}}_{k,n}^{\mathrm{disks}}$ be the space of stable
connected $(k,n)$-marked nodal disks with markings
$(\ul{x},\ul{z}) = (x_1, \ldots, x_k, z_1, \ldots, z_n)$. That is, the
markings $z_i$, $1\leq i \leq n$, are interior markings as before, but
there are now $k$ (say ordered) boundary markings $x_i$,
$0\leq i \leq k$.  We enlarge the moduli
$\ol{\mathcal{M}}_{k,n}^{\mathrm{disks}}$ as in Section \ref{bm} by
adding {\em treed disks} and denote the space of stable $(k,n)$-marked
treed disks as $\ol{\mathcal{M}}_{k,n}$. A {\em treed disk} of type
$\Gamma$ is a $(k,n)$-marked disk $(C,\ul{z})$ of combinatorial type
$\Gamma$ together with a metric
$\ell: \mathrm{Edge}_{<\infty,d}(\Gamma) \to [0,\infty]$. We will no
longer see the boundary markings as punctures, so that every disk
component of a marked treed disk is seen as a marked disk instead of a
punctured disk.

Open Gromov-Witten invariants are defined by counting
pseudoholomorphic curves with certain Lagrangian boundary conditions.
Let $(X,\omega)$ be a rational compact symplectic manifold and
$L\subset X$ a rational Lagrangian brane that is the fixed locus of an
anti-symplectic involution $\iota: X \to X$. Moreover, assume that the
minimal Maslov number is divisible by four and $L$ admits an
$\iota$-equivariant relative spin structure as in \cite[Corollary 1.6
(a)]{fooo:anti}.  Let $D\in X - L$ resp. $D_\iota$ be a weakly
stabilizing resp. stabilizing divisor for $L$ as constructed in
Section \ref{stab} with $J_D \in \mathcal{J}_\tau(X,\omega)$ resp.
$J_{D_\iota} \in \mathcal{J}_\iota(X,\omega)$ adapted to $D$
resp. $D_\iota$.  As in Remark \ref{families}, since $D$ and $D_\iota$
can be chosen to come from homotopic trivializing sections over $L$,
one can find a family of symplectically isotopic divisors $D_t \in X -
L$, $t \in [0,1]$, such that $D_1 = D$ and $D_0=D_\iota$ together with
a family of almost-complex structures $J_t$, $t \in [0,1]$, such that
$J_1=J_D$ and $J_0=J_{D_\iota}$.

As before, one can achieve transversality via domain-dependent almost
complex structures.  Consider a coherent family of perturbation data
$\ul{J} = (J_\Gamma)_{\Gamma}$ as in Definition \ref{coherent} that
satisfy the conditions of Remarks \ref{families} and \ref{anti} with
the distances of the components now being measured relative to the
component containing the marking $z_1$ (instead of the strip
components). In particular the $J_\Gamma$ are equal to $J_t$ around
the markings and at the boundary of disks components at distance
$-\mathrm{log}(t)$ from the component containing $z_1$.  We restrict
to the case $k = 0$.  Choose a combinatorial type $\Gamma$ of maps
from $(0,n)$-marked nodal treed disks of a certain combinatorial type
to $(X,L)$. One can consider as in Section \ref{sect_adap}, the spaces
$\mathcal{M}_{\Gamma}(X,L,\ul{J},\ul{D})$ of adapted
$J_\Gamma$-holomorphic disks with boundary on $L$.  Again, the fact
that the divisors $D$ and $D_\iota$ are weakly stabilizing
resp. stabilizing for $L$ ensures that those spaces can be
compactified by considering stable domains.

\begin{theorem}
Suppose that $\Gamma$ is an uncrowded combinatorial type of stable
disk trajectories of index $i(\Gamma)\leq 3$. Suppose that regular
coherent perturbation data $J_{\Gamma'}$ for types $\Gamma'>\Gamma$
are given. Then there exists a comeager subset
$\mathcal{J}_\Gamma^{\mathrm{reg}}(X,\ul{D}) \subset
\mathcal{J}_\Gamma(X,\ul{D}) $ of {\em regular} perturbation data,
compatible with the given types on the boundary strata, such that if
$J_\Gamma \in \mathcal{J}_\Gamma^{\mathrm{reg}}(X,\ul{D})$, then
$\mathcal{M}_{\Gamma}(X,L,\ul{J},\ul{D})$ is a smooth compact manifold
of the expected dimension, that is, it is a finite number of points if
$i(\Gamma)=3$ and it is empty whenever $i(\Gamma)<3$.
\end{theorem}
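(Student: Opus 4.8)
The plan is to follow the argument of Theorem \ref{main}, adapted to the case of treed disks without strip components. First I would set up the universal moduli spaces. For an uncrowded type $\Gamma$ with $i(\Gamma)\le 3$, assume regular coherent perturbation data have already been chosen for all types $\Gamma' > \Gamma$, and work in each local trivialization $\U_\Gamma^i \to \M_\Gamma^i \times C$ of the universal treed disk, forming the Banach manifold
$$ \B_{k,p,l,\Gamma}^i := \M_\Gamma^i \times \Map^{k,p}_\Gamma(C,X,L,\ul D) \times \cJ_\Gamma^l(X,\ul D), $$
where the perturbation data are constrained as in Remarks \ref{families} and \ref{anti} (equal to $J_t$ near the markings and on the boundary of the disk components at distance $-\log(t)$ from the component carrying $z_1$, and $\iota$-anti-invariant on the components at infinite distance from that component). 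The Cauchy-Riemann operator defines a $C^q$ Fredholm section of a Banach bundle $\E_{k,p,l,\Gamma}^i$ over $\B_{k,p,l,\Gamma}^i$ whose zero set is the local universal moduli space. As in Theorem \ref{main}, working locally in a trivialization and applying Sard-Smale there sidesteps the loss-of-derivatives problem discussed in Remark \ref{loss}; one then patches the resulting finite-dimensional manifolds together using the (smooth) transition maps for holomorphic solutions.

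The key step is surjectivity of the linearization on the universal space. On components without tangency conditions this follows from the standard argument of McDuff-Salamon \cite[Proposition 3.2.1]{ms:jh} and Dragnev \cite{dragnev:fred}: at a point where $\d u \ne 0$ a domain-dependent variation of the almost complex structure pairs nontrivially with any element of the cokernel, unique continuation for $D_u^*\eta = 0$ then forces $\eta$ to vanish identically, and surjectivity for constant disks and spheres disposes of the ghost components—this is exactly where the uncrowded hypothesis (at most one marking per maximal ghost component) is used. For the $\iota$-anti-invariant perturbation data one argues as in Remark \ref{antiinv}(c): every marking $z_i$ lies in the real locus $C_\R$ and $u(z_i)\in D$, which is disjoint from $L$, so $u(z)\ne\iota(u(\bar z))$ for $z$ near $z_i$, and an anti-invariant variation supported near $u(z)$ and vanishing near $\iota u(\bar z)$ still makes the universal moduli space transverse. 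Tangency conditions are handled via the jet-surjectivity results of Cieliebak-Mohnke \cite[Lemmas 6.5, 6.6]{cm:trans}: imposing a tangency of order $\ge 2$ at a marking drops the Fredholm index by two, so any type carrying such a tangency has negative expected dimension in the range $i(\Gamma)\le 3$ and hence contributes nothing. Sard-Smale applied in each trivialization, intersected over first-derivative bounds $B$ as in \cite[Proof of Theorem 5.1]{fhs:tr}, then produces the comeager set $\cJ_\Gamma^{\reg}(X,\ul D)$ for which $\M_\Gamma(X,L,\ul J,\ul D)$ is a smooth manifold of expected dimension.

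Finally I would establish compactness by the argument of Theorem \ref{compthm}, using Theorem \ref{preconv}: a Gromov-Floer limit $u:\hat C\to X$ of adapted stable disks of type $\Gamma$ has a possibly unstable bubbled domain $\hat C$ with stabilization $C$; since the perturbation datum equals $J_{D_t}$ on the divisor $D_t$, and $D_t$ is (weakly) stabilizing of sufficiently large degree (Lemma \ref{largelem}), every nonconstant bubble component carries enough intersection points with $D_t$—which must be markings—forcing it to be stable, so $\hat C = C$; a nonconstant sphere component, after forgetting all but one marking on each maximal ghost component (Remark \ref{uncrowd}), would lie in an uncrowded stratum of negative expected dimension, so all sphere components are ghosts, and a maximal ghost component with two markings would force a node of intersection degree $\ge 2$ with $D_t$, again a contradiction. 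Hence for $i(\Gamma)<3$ the moduli space is empty and for $i(\Gamma)=3$ it is a compact $0$-manifold, i.e. a finite set of points. The main obstacle I anticipate is the bookkeeping in the transversality step once the family of divisors $D_t$ and the anti-invariance constraint are imposed simultaneously: one must verify that the constrained space $\cJ_\Gamma^l(X,\ul D)$ is still nonempty and supports enough variations for the Sard-Smale argument, which rests precisely on the markings lying away from both $L$ and its image $\iota(L)=L$, as exploited above.
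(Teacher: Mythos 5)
Your proposal is correct and takes essentially the same route as the paper: the paper's proof of this theorem is a one-sentence citation of the transversality argument of Theorem \ref{main} (in the low-index case) and the compactness argument of Theorem \ref{compthm}, and you have filled in the details by carefully adapting those proofs to treed disks with the $\ul{D}$ and $\iota$-anti-invariance constraints.
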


The proof of the above transversality statement is obtained by the
transversality proof of Theorem \ref{main} for trajectories with index
at most one. The compactness part can be achieved as in Theorem
\ref{compthm}. The open Gromov-Witten invariants are counts of points
of $\mathcal{M}_{\Gamma}(X,L,\ul{J},\ul{D})$ using signs $\eps(u)$
obtained by a fixed reference orientation on $\ol{\mathcal{M}}_{k,n}$.
Let $\beta = \pi_2(\Gamma) \in \Pi(X,L)$ be the sum of the the
$\Pi(X,L)$ labels on the vertices of $\Gamma$ and
$[\partial \beta] \in \pi_1(L)$ be the homotopy class of the boundary
of $\pi_2(\Gamma)$. Since the index $i(\Gamma)$ only depends on
$\beta$, given a class $\beta \in \Pi(X,L)$, we write the resulting
index as $i(\beta)$.

\begin{definition}
 \label{opendef}
 For $\beta \in \Pi(X,L)$ is such that $i(\beta)=3$ and
 $[\partial \beta] \neq 0 \in \pi_1(L)$, set
$$ \tau_{X,L}(\beta) :=\tau_{X,L}(\beta,\ul{J},\ul{D}) := \frac{1}{n(\beta)!}
\sum_{\pi_2(\Gamma) = \beta } \sum_{u \in
  \mathcal{M}_\Gamma(X,L,\ul{J},\ul{D})} \eps(u) \in \Q$$
\end{definition}

We sketch an argument that the open Gromov-Witten invariants defined
above are independent of the choice of regular perturbation data
$\ul{J}$ and the choice of divisors $\ul{D}$.  Suppose first that two
regular choices of coherent perturbations $\ul{J}^{(0)},\ul{D}^{(0)},
$ and $\ul{J}^{(1)},\ul{D}^{(1)}$ are smoothly homotopic through a
$1$-parameter family $\ul{J}^{(s)},\ul{D}^{(s)}$, $s\in [0,1]$. In
particular, the stabilizing divisors have the same degree and are
built from homotopic trivializing sections over $L$.  For
$i(\beta)=3$, consider the moduli spaces
$$\ol{\mathcal{M}}(X,L,\beta,\ul{J}^{(s)},\ul{D}^{(s)}) =
\underset{\substack{\pi_2(\Gamma) = \beta\\ s\in [0,1]}}{\bigcup}
\mathcal{M}_{\Gamma}(X,L,\ul{J}^{(s)},\ul{D}^{(s)}).$$
As in the proof of Theorem \ref{main} regarding dimension one trajectories
and their gluings, for a comeager subset of homotopies
$\ul{J}^{(s)},\ul{J}^{(s)}$,
$\ol{\mathcal{M}}(X,L,\beta,\ul{J}^{(s)},\ul{D}^{(s)})$ has a
$1$-dimensional cell complex structure.  Furthermore, the zero
skeleton $\ol{\mathcal{M}}(X,L,\beta,\ul{J}^{(s)},\ul{D}^{(s)})_0$ of
the non-circle components is the union
$$\ol{\mathcal{M}}(X,L,\beta,\ul{J}^{(k)},\ul{D}^{(k)})= 
\bigcup_{\pi_2(\Gamma) = \beta}
\mathcal{M}_\Gamma(X,L,\ul{J}^{(k)},\ul{D}^{(k)}),\quad
k \in \{ 0, 1 \}$$
and a union of fake boundary strata
$ \cup_{f=1}^{F}
\mathcal{M}_{\Gamma_f}(X,L,\ul{J}^{(s_f)},\ul{D}^{(s_f)}).$
The fake boundary strata correspond to configurations with domains in
a codimension one strata of $\ol{\mathcal{M}}_{0,n(\beta)}$ made of
nodal disks having exactly one node of length $0$ or $\infty$.
Consider first the length $0$ case.  The gluing arguments in the proof
of Theorem \ref{main} imply that the configuration can be glued by
both (Collapsing an edge) or (Making an edge non-zero). An orientation
transfers consistently over such stratum in the sense that it induces
opposite orientations on the two glued families.  On the other hand,
in the length $\infty$ case, as in Proposition \ref{diagonal}, the
anti-symplectic involution acts on a pair of points of
$\bigcup_{f=1}^{F}
\mathcal{M}_{\Gamma_f}(X,L,\ul{J}^{(s_f)},\ul{D}^{(s_f)})$
in such a way that an orientation induces opposite orientations on the
two corresponding glued families.  Combining these arguments gives
that
$$\tau_{X,L}(\beta,\ul{J}^{(0)},\ul{D}^{(0)}) =
\tau_{X,L}(\beta,\ul{J}^{(1)},\ul{D}^{(1)}) .$$
If the data $\ul{J}^{(0)},\ul{D}^{(0)}$ and
$\ul{J}^{(1)},\ul{D}^{(1)}$ are not homotopic, for instance if
$D^{(0)}$ and $D^{(1)}$ have different degrees, one can reduce the
problem to the homotopic case as in \cite[Theorem 1.3]{cm:trans},
using homotopic stabilizing divisors $\ul{D}^{(0)'}$ and
$\ul{D}^{(1)'}$ of the same sufficiently high degree that are
respectively $\ul{J}^{(0)}$ and $\ul{J}^{(1)}$ almost-complex and
$\epsilon$-transverse to $\ul{D}^{(0)}$ and $\ul{D}^{(1)}$.

The open Gromov-Witten invariants can be also defined without the use
of treed disks as follows.  In the above setting, taking $D=D_\iota$,
that is, choosing $D$ to be a $\iota$-invariant stabilizing divisor,
and choosing the perturbations $\ul{J}$ to be $\iota$-anti-invariant
on disk components allows the cancellation of the contribution of the
configurations having a disk bubble.

\section{Relative invariants}
\label{relative} 

In this section we construct maps between Lagrangian Floer cohomology
groups associated to surfaces with strip like ends.  We show that
these maps are independent of all choices; in the case that the
surface is simply an infinite strip, it follows that the Lagrangian
Floer cohomology is independent, up to isomorphism, of Hamiltonian
perturbation and independent of the choice of stabilizing divisor and
perturbation data used to construct it.  We also show that in the case
of the diagonal, the perturbation scheme above constructed from
holomorphic spheres gives the same Floer cohomology, up to isomorphism
as a perturbation constructed from holomorphic strips with diagonal
boundary conditions.  This suffices to show that in case of diagonal
boundary conditions, the Floer cohomology is the singular cohomology
with Novikov coefficients.

\subsection{Relative invariants} 

Relative invariants are maps between Floer cohomology groups defined
by counting perturbed pseudoholomorphic sections of a symplectic
fibrations with strip-like ends.

Recall the basic notations for surfaces with strip-like ends from, for
example, Seidel \cite{se:book}.  A surface with strip like ends
consists of a surface with boundary $\Sigma$ equipped with a complex
structure $j: T \Sigma \to T\Sigma$, and a collection of embeddings
$$ \kappa_e : \pm(0,\infty) \times [0,1] \to \Sigma, \quad i =
0,\ldots, n $$
such that $\kappa_e^* j$ is the standard almost complex structure on
the strip, and the complement of the union of the images of the maps
$\kappa_e$ is compact.  Any such surface has a canonical
compactification $\ol{\Sigma}$ with the structure of a compact surface
with boundary obtained by adding a point at infinity along each strip
like end and taking the local coordinate to be the exponential of
$ \pm 2 \pi i \kappa_e $.

Our surfaces with strip-like ends will be labelled by Lagrangian
boundary conditions and equipped with Hamiltonian perturbations.
Denote the connected components of $\Sigma$ by
$(\partial \Sigma)_i, i = 0,\ldots, m$.  We assume for simplicity that
each boundary component $(\partial \Sigma)_i$ is contractible (rather
than a circle) and that for each $i = 0,\ldots, m$ we have fixed a
Lagrangian brane $L_i$.  For each end $e = 1,\ldots, n$ we denote the
adjacent Lagrangian branes $L_{e,-}, L_{e,+}$.  For each end $e$ we
suppose that $ H_e \in C^\infty([0,1] \times X)$ be time-dependent
Hamiltonian perturbations such that $\phi_{H_e}(L_{e,-}) \cap L_{e,+}$
is a clean intersection and $\phi_{H_e}(L_{e,-}) \cup L_{e,+}$ is
rational for each end. 

\begin{figure}[h!]
\begin{picture}(0,0)%
\includegraphics{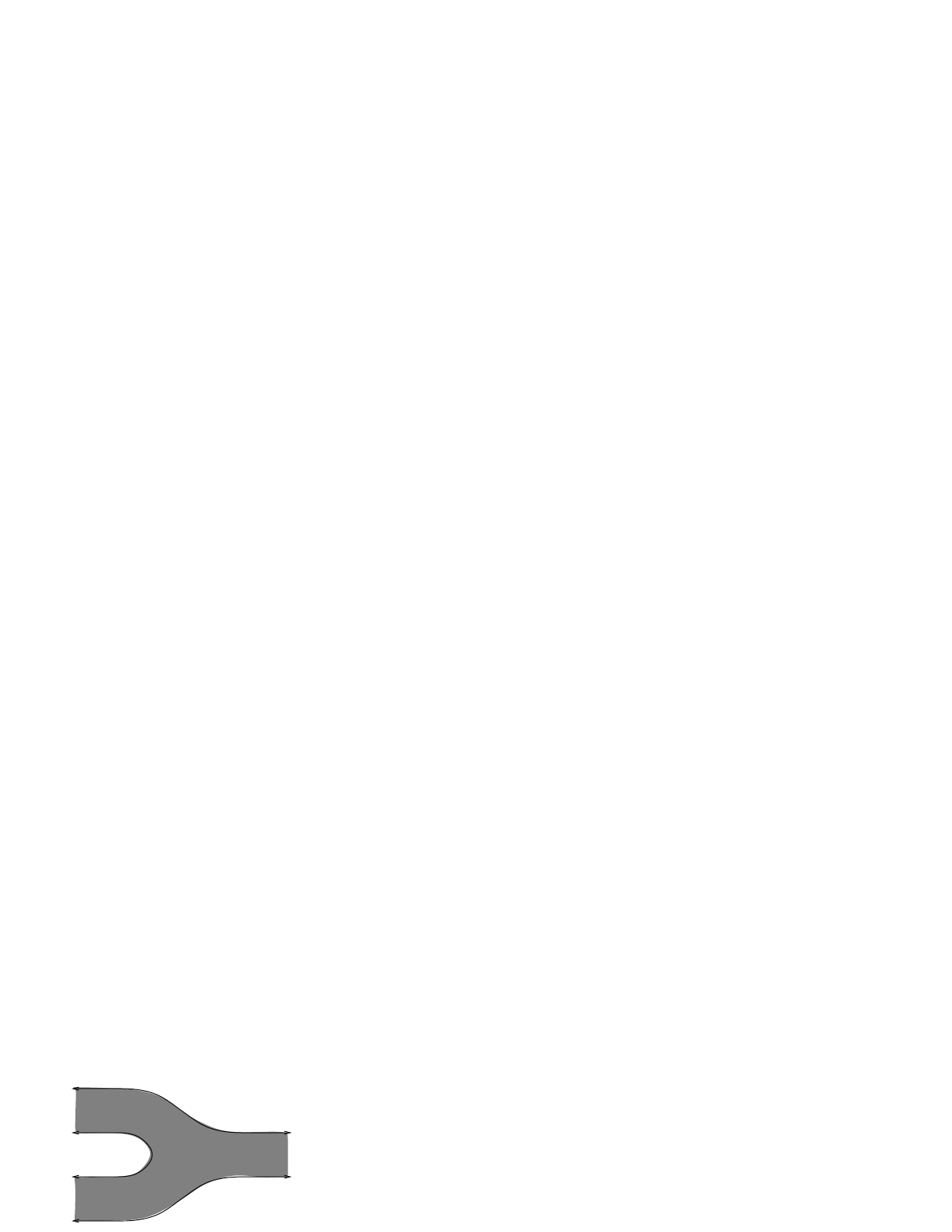}%
\end{picture}%
\setlength{\unitlength}{4144sp}%
\begingroup\makeatletter\ifx\SetFigFont\undefined%
\gdef\SetFigFont#1#2#3#4#5{%
  \reset@font\fontsize{#1}{#2pt}%
  \fontfamily{#3}\fontseries{#4}\fontshape{#5}%
  \selectfont}%
\fi\endgroup%
\begin{picture}(3030,1490)(1072,-400)
\put(4087,339){\makebox(0,0)[lb]{\smash{{\SetFigFont{12}{14.4}{\rmdefault}{\mddefault}{\updefault}{$H_{e_0}$}%
}}}}
\put(1087,859){\makebox(0,0)[lb]{\smash{{\SetFigFont{12}{14.4}{\rmdefault}{\mddefault}{\updefault}{$H_{e_1}$}%
}}}}
\put(1136,-113){\makebox(0,0)[lb]{\smash{{\SetFigFont{12}{14.4}{\rmdefault}{\mddefault}{\updefault}{$H_{e_2}$}%
}}}}
\put(2991,899){\makebox(0,0)[lb]{\smash{{\SetFigFont{12}{14.4}{\rmdefault}{\mddefault}{\updefault}{$L_0$}%
}}}}
\put(2898,-331){\makebox(0,0)[lb]{\smash{{\SetFigFont{12}{14.4}{\rmdefault}{\mddefault}{\updefault}{$L_2$}%
}}}}
\put(1975,335){\makebox(0,0)[lb]{\smash{{\SetFigFont{12}{14.4}{\rmdefault}{\mddefault}{\updefault}{$L_1$}%
}}}}
\end{picture}%
\caption{A surface with strip-like ends}
\label{surfwends}
\end{figure}

The construction of relative invariants begins with the extension of
the Hamiltonian perturbation and almost complex structures on the ends
over the entire surface.  Choose an extension of the Hamiltonian
perturbations at the ends over the strip as a one-form
$$K\in \Omega^1(\Sigma, C^\infty(X)), \quad \forall e \in \mE, \ \kappa_e^* K =
- H_e \d t .$$
Suppose almost complex structures $J_e:[0,1] \to \J(X,\omega) $ for
each end have been chosen compatible with the symplectic form
$\omega$.  Extend the almost complex structures $J_e$ to a compatible
almost complex structure 
$$J: \Sigma \to \J(X,\omega), \quad \kappa_e^* J(s,t) = J_e(t),
\forall e \in \mE, \ t \in [0,1], s \in \pm ( 0,\infty) .$$

The Hamiltonian-perturbed Cauchy-Riemann operator is the usual
Cauchy-Riemann operator for a modified almost complex structure on the
total space of a fibration.  Namely consider
$ E := \Sigma \times X $
as fiber bundle over $\Sigma$ with fiber $X$. Following
\cite[(8.1.3)]{ms:jh}, let $\pi_X: E \to X$ denote the projection on
the fiber. In local coordinates $s,t$ on $\Sigma$ define $K_s,K_t$
by $ K = K_s \d s + K_t \d t$.  Let
$$ \omega_{E} = \pi_X^* \omega - \pi^*_X \d K_s \wedge \d s - \pi_X^* \d
K_t \wedge \d t + (\partial_t K_s - \partial_s K_t) \d s \wedge \d t
.$$
The form $\omega_{E}$ is closed, restricts to the two-form
$\omega$ on any fiber, and defines the structure of a symplectic fiber
bundle on $E$ over $\Sigma$.  Consider the splitting
$TE \cong \pi_X^* TX \oplus (\Sigma \times \R^2) .$
Let $j_\Sigma: T \Sigma \to T\Sigma$ denote the standard complex
structure on $\Sigma$.  Define an almost complex structure on $E$ by
$$ J_E : T E \to TE, \quad (v,w) \mapsto ( ( J \hat{K} - \hat{K} j_\Sigma )w +
Jv, j_\Sigma w ) $$
where, as in Section \ref{floertraj},
$\hat{K} \in \Omega^1(\Sigma,\Vect(X))$ is the
Hamiltonian-vector-field-valued one-form associated to $K $.  See
Figure \ref{surfwends}.  A smooth
map $u: \Sigma \to X$ is $(J,K)$-holomorphic if and only if the
associated section $(\on{id} \times u): \Sigma \to E$ is
$J_E$-holomorphic \cite[Exercise 8.1.5]{ms:jh}.  Let
$ \ti{L}_i = (\partial \Sigma)_i \times L_i$ the fiber-wise Lagrangian
submanifolds of $E$ defined by $L_i$.  Then $u: \Sigma \to X$ has
boundary conditions in $(L_i, i = 1,\ldots, m)$ if and only if
$\on{id} \times u: \Sigma \to E$ has boundary conditions in
$(\ti{L}_i, i = 1,\ldots, m)$.  Thus, we have re-formulated
Hamiltonian-perturbed holomorphic maps with Lagrangian boundary
conditions as holomorphic sections.

The construction of the relative invariants proceeds, similar to
Floer's original work \cite{floer:lag}, \cite{floer:symp}, by
constructing moduli spaces of holomorphic sections of the bundle in
the previous paragraph.  Our regularization uses a divisor in the
total space of the fibration.  Suppose that $J_e \in \J(X,\omega)$ are
compatible almost complex structures stabilizing for
$\phi_{H_e,1}(L_{e,-}) \cup L_{e,+}$.  Let $\phi_{H_e,1-t}^* J_e$
denote the corresponding time-dependent almost complex structures, and
$\sigma_{k,e} : X \to \ti{X}^k$ are asymptotically $J_e$-holomorphic,
uniformly transverse sequences of sections with the property that
$D_{e} = \sigma_{k,e}^{-1}(0)$ are stabilizing for
$\phi_{H_e}(L_{e,-}) \cup L_{e,+}$ for $k$ sufficiently large.  The
pull-backs $\phi_{e,1-t}^* \sigma_{k,e}$ are then
$\phi_{e,1-t}^* J_e$-holomorphic, and any $(J_e,H_e)$-holomorphic
strip with boundary in $(L_{e,-}, L_{e,+})$ meets
$\phi_{e,1-t}^{-1}(\Sigma \times D_{e})$ in at least one point.
Denote by $\ti{E} \to E$ the pull-back of $\ti{X} \to X$ to the
fibration, equipped with the almost complex structure induced by the
given almost complex structure on $E$.

\begin{lemma}  
  Let $\Sigma$ be a surface with strip-like ends, let $L_i \subset X$
  be rational Lagrangians associated to the boundary components
  $(\partial \Sigma)_i$, and suppose that stabilizing divisors $D_e$
  for the ends $e = 1,\ldots, n$ of $\Sigma$ have been chosen as zero
  sets of asymptotically holomorphic sequences of sections
  $\sigma_{e,k}$ for $k$ sufficiently large.  There exists a
  asymptotically $J_E$-holomorphic, uniformly transverse sequence
  $\sigma_k: E \to \ti{E}^k$ with the property that for each end $e$,
  the pull-back $ \kappa_e^* \sigma_k( \cdot + s, \cdot, \cdot)$
  converges in $C^\infty$ uniformly on compact subsets to
  $\phi_{e,1-t}^* \sigma_{e,k}$ as $s \to \pm \infty$.  The zero
  set $D_E = \sigma_k^{-1}(0)$ is approximately holomorphic for $k$
  sufficiently large, asymptotic to
  $(1 \times \phi_{e,1-t})^{-1} ( \R \times [0,1] \times D_e)$ for
  each end $e = 1,\ldots, n$, and the intersection of $D_E$ with each
  boundary fiber $\pi^{-1}(z), z \in \partial \Sigma$ is stabilizing.
\end{lemma}

\begin{proof} We first compactify the fibration as follows.  Let
  $\H = \{ z \in \C \ | \ \on{Im}(z) \ge 0 \}$ denote complex
  half-space.  Let $\ol{E}$ be the fiber bundle over $\ol{\Sigma}$
  with fiber $X$ defined by gluing together
$$ U_0 = \Sigma \times X, \quad U_e= \H \times
X ,e = 1,\dots, n$$
using the transition maps $\kappa_e \times \phi_{H_e,1\pm t}$ on
$\H - \{ 0 \} \cong \R \times [0,1]$ from $U_0$ to $U_e$.  Denote the
projections over $U_e$ to $X$ by $\pi_{X,e}$.  The two-forms
$\omega_{E}$ on $U_0$, and $\pi_{X,e}^* \omega$ on $U_e$ glue together
to a two-form $\omega_E$ on $E$, making $\ol{E}$ into a symplectic
fiber bundle.

The fiber-wise symplectic form above may be adjusted to an honest
symplectic form by adding a pull-back from the base, and furthermore
adjusted so that the boundary conditions have rational union.  For the
first claim, since $\omega_E$ is fiber-wise symplectic, there exists a
symplectic form $\nu \in \Omega^2( \ol{\Sigma})$ with the property
that $\omega_E + \pi^* \nu$ is symplectic, where
$\pi: \ol{E} \to \ol{\Sigma}$ is the projection.  The almost complex
structure $J_E$ is compatible with $\omega_E + \pi^* \nu$, and equal
to the given almost complex structures $J_B \oplus J_e$ on the ends.
For the second claim, let $\ti{\Sigma} \to \ol{\Sigma}$ be a
line-bundle-with-connection whose curvature is
$\nu \in \Omega^2(\ol{\Sigma})$.  Let $\ti{E} \to \ol{\Sigma}$ be a
line-bundle-with-connection whose curvature is $\omega_E + \pi^* \nu$.
Denote by $\ti{L}_i$ the closure of the image of
$(\partial \Sigma)_i \times L_i$ for $i = 0,\ldots,m$.  Fix
trivializations of $\ti{E}$ over
$\ti{L}_{e_-} \cap \ti{L}_{e,+} \cong L_{e,-} \cap L_{e,+}$ for each
end $e$.  By assumption, the line bundle $\ti{E}$ is trivializable
over $L_e$, hence also $\ti{L}_e$ by parallel transport along the
boundary components.  Let $e_k, k = 0,1$ be ends connected by a
connected boundary component labelled by $L_i$.  For any
$p_k \in \phi_{H_{e_k},1}(L_{e_k,-}) \cap L_{e_k,+}$ the parallel
transport $T(p_0,p_1) \in U(1)$ from $p_0$ to $p_1$ is independent of
the choice of path.  Indeed, any two paths differ up to homotopy by a
loop in $L_i$, which has trivial holonomy by assumption.  After
perturbation of the connection and curvature on $\ti{E}$, we may
assume that the parallel transports $T(p_0,p_1)$ are rational for all
choices of $(p_0,p_1)$.  After taking a tensor power of $\ti{E}$, we
may assume that the parallel transports $T(p_0,p_1)$ are trivial,
hence $\ti{E}$ admits a covariant constant section $\tau$ over the
union $\ti{L}_e, e = 1,\ldots, n$.

Donaldson's construction \cite{don:symp} implies the existence of a
symplectic hypersurface in the total space of the fibration.  We show
that the hypersurface may be taken to equal the pullback of one of the
given ones over a neighborhood of the boundary of the base, as
follows.  Let $\sigma_{m,k}: X \to \ti{X}^k$ be an asymptotically
holomorphic sequence of sections concentrating on $L_e$ for
$e = 1,\ldots, n$.  Let $\sigma_{e,k}: X \to \ti{X}^k$ be an
asymptotically holomorphic sections of sections concentrating on
$\phi_{H_e,1}(L_{e,-}) \cup L_{e,+}$ and $\sigma_{i,k}$ an
asymptotically holomorphic sequence of sections concentrating on
$L_i$, both asymptotic to the given trivializations on the Lagrangians
themselves.  For each point $z \in \ol{\Sigma}$, let
$\sigma_{z,k}: \ol{\Sigma} \to \ti{\Sigma}^k$ denote the Gaussian
asymptotically holomorphic sequence of sections of $\ti{\Sigma}^k$
concentrated at $z$ as in \eqref{approxhol}.  We may assume that the
images of the strip-like ends are disjoint.  Let $V_i$ be disjoint
open neighborhoods $(\partial \Sigma)_i - \cup_e \on{Im}(\kappa_e)$ in
$\Sigma$.  For each $p = (z,x) \in \ti{L}_e$, let $ \sigma_{x,k}$ be
either equal to $\sigma_{e,k}$, for $z \in \on{Im}(\phi_e)$ or
otherwise equal to $\sigma_{i,k}$ if $b$ lies in $V_i$.  Let $P_k$ be
a set of points in $\partial \ol{\Sigma}$ such that the balls of
$g_k$-radius $1$ cover $\partial \ol{\Sigma}$ and any two points of
$P_k$ are at least distance $2/3$ from each other, where $g_k$ is the
metric defined by $k \nu$.  The desired asymptotically-holomorphic
sections are obtained by taking products of asymptotically-holomorphic
sections on the two factors: Write
$\theta(z,x) \tau(z,x) = \sigma_{x,k}(x) \boxtimes \sigma_{z,k}(z)$ so
that $\theta(x,z) \in \C$ is the scalar relating the two sections.
Define
\begin{equation} \label{approxhol} \sigma_k = \sum_{p \in P_k}
  \theta(z,x)^{-1} \sigma_{x,k} \boxtimes \sigma_{z,k} .\end{equation}
Then by construction the sections \eqref{approxhol} are asymptotically
holomorphic, since each summand is.

It remains to achieve the uniformly transverse condition.  Recall from
\cite{don:symp} that a sequence $(s_k)_{k \ge 0}$ is {\em uniformly
  transverse} to $0$ if there exists a constant $\eta$ independent of
$k$ such that for any $x \in X$ with $|s_k(x)| < \eta$, the derivative
of $s_k$ is surjective and satisfies $| \nabla s_k(x)| \ge \eta$.  The
sections $\sigma_k$ are asymptotically $J_E$-holomorphic and uniformly
transverse to zero over $\partial \ol{\Sigma} \times X$, since the
sections $\sigma_{i,k}$ and $\sigma_{e,k}$ are uniformly transverse to
the zero section. Hence $\sigma_k$ is also uniformally transverse over
a neighborhood of $\partial \Sigma$.  Pulling back to $E$ one obtains
an asymptotically holomorphic sequence of sections of
$\ti{E} | \Sigma$ that is uniformally transverse in a neighborhood of
infinity, that is, except on a compact subset of $E$.  Donaldson's
construction \cite{don:symp} although stated only for compact
manifolds, applies equally well to non-compact manifolds assuming that
the section to be perturbed is uniformly transverse on the complement
of a compact set.  The resulting sequence $\sigma_{E,k}$ is
uniformally transverse and consists of asymptotically holomorphic
sections asymptotic to the pull-backs of $\sigma_{e,k}$ on the ends.
The divisor $D_E = \sigma_{E,k}^{-1}(0)$ is approximately holomorphic
for $k$ sufficiently large and equal to the given divisors
$ \pm (0,\infty) \times D_e$ on the ends, by construction, and
concentrated at $L_i$, over each boundary component
$(\partial \Sigma)_i$.
\end{proof}

We warn the reader that the Donaldson hypersurface constructed above
does not stabilize bubbles in arbitrary fibers.  Indeed in general the
Donaldson hypersurface in the total space of the fibration constructed
above will not be of product form, and the projection to the base will
have singular values.  In particular, the intersection of $D_E$ with
$\{ z \} \times X$ could, for some $z \in \Sigma$, be singular and
cannot be used to stabilizes sphere bubbles in these fibers.  However,
the genericity assumptions below will guarantee that sphere and disk
bubbles meet $D_E$ transversally and so do not meet these singular
points.

Because of the perturbation required to make the Donaldson
hypersurface almost complex, the projection to the base is no longer
an almost complex map.  In particular, the projection of a component
lying in the small neighborhood of a fiber need no longer be a point.
However, the following holds for the disk components in the perturbed
moduli space: For each perturbed stable strip $u: C \to E$, consider
the projection $\pi \circ u: C \to \Sigma$.  For each disk component
$C_k \subset C$, the long exact sequence of homotopy groups implies
that the homotopy class of $\pi \circ u | C_k$ is classified by its
winding number.  The boundary conditions now imply that any disk with
boundary in $\ti{L}_i$ has homotopically trivial projection, any strip
connecting the positive end of $E$ with itself has homotopically
trivial projection, while any strip $C_k$ connecting the two ends of
$E$ has projection $\pi \circ u | C_k$ homotopically trivial to the
identity.

A perturbation scheme similar to the one for Floer trajectories makes
the moduli spaces transverse.  Choose a tamed almost complex structure
$J_E \in \J_\tau(E, \omega_E + \pi_B^* \nu)$ making $D_E$ holomorphic,
so that $D_E$ contains no holomorphic spheres, each holomorphic sphere
meets $D_E$ in at least three points, and each disk with boundary in
$\ti{L}_{e_-} \cup \ti{L}_{e,+}$ meets $D_E$ in at least one point.
Since $D_E$ is only approximately holomorphic with respect to the
product complex structure, the complex structure $J_E$ will not
necessarily be of split form, nor will the projection to $\Sigma$
necessarily be $(J_E,j_\Sigma)$-holomorphic away from the ends.
Furthermore, choose domain-dependent perturbations $F_{\Gamma}$ of the
Morse functions $F_e$ on $\phi_{H_e,1}(L_{e,-}) \cap L_{e,+}$, so that
$F_\Gamma$ is a perturbation of $F_e$ on the segments that map to
$\phi_{H_e,1}(L_{e,-}) \cap L_{e,+}$.

Domain-dependent perturbations give a regularized moduli space of
stable adapted treed strips.  These are maps to $E$, homotopic to
sections, with the given Lagrangian boundary conditions and mapping
the positive resp. negative end of the strip to the positive
resp. negative end of $E$.  Let $\ol{\M}(\ul{L},\ul{D},\ul{P})$ denote
the moduli space of adapted stable treed strips in $E$ of this type.
For generic domain-dependent perturbations
$P_{\Gamma,E} = (F_{\Gamma,E},J_{\Gamma,E})$ on $E$, the moduli space
$\ol{\M}(\ul{L},\ul{D},\ul{P},(x_e))$ of perturbed maps to $E$ with
boundary in $(L_{E,0}, L_{E,1})$ and limits $(x_e)$ has zero and
one-dimensional components that are compact and smooth (as manifolds
with boundary) with the expected boundary.  In particular the boundary
of the one-dimensional moduli spaces
${\M}_1(\ul{L},\ul{D},\ul{P}, (x_e))$ are $0$-dimensional strata
$\M_\Gamma(\ul{L},\ul{D},\ul{P}, (x_e))$ corresponding to either a
Floer trajectory bubbling off on end, or a disk bubbling off the boundary.

\begin{remark} \label{arearel} {\rm (Area-energy relation)} Suppose
  that $\Sigma$ is a disk with points on the boundary removed and
  $u: \Sigma \to E$ is a smooth map with a continuous extension to a
  map $\ol{\Sigma} \to \ol{E}$, exponential convergence on the strip
  like ends to limits $x_e, e = 1,\ldots, n$ and boundary conditions
  $\ti{L}_i, i = 1,\ldots, m$.  Since $\omega_E$ is only fiber-wise
  symplectic, the area $A(u) = \int_\Sigma u^* \omega_E$ may be
  negative.

  We obtain a lower bound on the area as follows.  If the projection
  is pseudoholomorphic, then the composed map $\pi \circ u$ is
  pseudoholomorphic and one easily obtains the identity
  \begin{equation} \label{bound} A(u) = \int_\Sigma u^* \omega_E =
    \int_\Sigma u^*(\omega_E + \pi^* \nu) - \int_\Sigma \nu \ge -
    \int_\Sigma \nu .\end{equation}
  Indeed because of the boundary conditions $\pi \circ u$ maps
  isomorphically onto $\Sigma$.  In general, since the projection
  $\pi$ is not necessarily almost complex after perturbation, the
  composed map $\pi \circ u$ is not necessarily the identity.
  However, by the boundary condition the map on the boundary
  $\pi \circ u| \partial \Sigma$ is homotopic to the identity.  The
  maps from the disk relative its boundary are classified up to
  homotopy by their winding number, as a special case of the long
  exact sequence for relative homotopy groups.  It follows that the
  composed map $\pi \circ u$ is homotopic to the identity.  Hence
  $ \int u^* \pi^* \nu = \int \nu$.  This identity implies the area
  identity \eqref{bound} for maps $u: B \to E$ with the given boundary
  conditions.  The same identity \eqref{bound} holds for maps from
  stable holomorphic strips $u: C \to E$, since these consist of a
  principal component as above and a number of ``bubble components''
  for which the projection is homotopically trivial.  In particular
  the $\omega_E$-area of a stable holomorphic strip $u: C \to E$
  differs from the energy as a map to the total space by a universal
  constant.
\end{remark}

\begin{definition} {\rm (Chain-level relative invariants)} Given
  $D_E,P_{\Gamma,E} = (J_{\Gamma,E},F_{\Gamma,E})$ and with
  $\ul{D}_\pm$ the collection of stabilizing divisors for the incoming
  resp. outgoing ends, define
  \begin{multline} \label{psi} \psi: \bigotimes_{e \in \mE_-}
    CF(L_{e,-},L_{e,+},;H_e,\ul{D}_-,\ul{P}_-) \to \bigotimes_{e \in
      \mE_+} CF(L_{e,-},L_{e,+}; H_e,\ul{D}_+, \ul{P}_+) , \\
    \otimes_{e \in \mE_-} \bra{x_e} \mapsto \sum_{x_e, e \in \mE_+}
    \sum_{[u] \in {\M}_0^{}(\ul{L},H,\ul{D},\ul{P}, \ul{x})} \eps([u])
    q^{A([u])} \sigma([u]) \otimes_{e \in \mE_+} \bra{x_e}
    .\end{multline}
\end{definition}

\begin{theorem} \label{chainmap2} The chain-level map $\psi$
  associated to the surface with strip-like ends $\Sigma$ is
  well-defined and a chain map: $\psi \partial =
\partial \psi$.
\end{theorem} 

\begin{proof} First we show that the sum \eqref{psi} is well-defined.
  By Remark \ref{arearel}, any area bound $A(u) < E_0$ implies an
  energy bound $E(u) < E_0$ as maps into the total space of the
  fibration $E| \Sigma$, viewed as a symplectic manifold with
  strip-like ends.  Gromov compactness implies that the moduli space
  $\ol{\M}^{< E_0}(\ul{L},H,\ul{D},\ul{P})$ with the given area bound
  is compact, hence has finite zero-dimensional component
  $\ol{\M}^{<E_0}_0(\ul{L},H,\ul{D},\ul{P})$.  It follows that the map
  $\psi$ is well-defined.  

  The proof of the chain relation is similar to the proof of Theorem
  \ref{squarezero}: the boundary of the one-dimensional components of
  the moduli space of perturbed trajectories
  $\ol{\M}_1(\ul{L},H,\ul{D},\ul{P})$ consists of configurations of
  type $\Gamma$ corresponding to a Floer trajectory bubbling off at
  the positive or negative end of the strip, or to a disk bubble
  forming at the boundary.  The latter cancel by Remark \ref{antiinv}
  or in the case of graphs as in Remark \ref{diagonal}, assuming the
  perturbation system is anti-invariant in the first case or spherical
  on the disk components with diagonal boundary conditions at infinite
  distance.
 \end{proof}

 Our main application of the relative invariants is the isomorphism of
 Floer cohomologies defined using different perturbation systems.
 Suppose that $\Sigma = \R \times [0,1]$ is an infinite strip, $L_0$
 and $L_1$ are equal and one of the Hamiltonian perturbations, say
 $H_-$, vanishes while the other $H_+$ is such that
 $\phi_{H_+,1}(L_0) \cap L_1$ is a transverse intersection.  In this
 case, the gradient trajectories on the positive end are constant and
 so may be ignored, while the gradient trajectories on the negative
 end are the Morse trajectories on $L_0 = L_1$.  Thus in this case one
 obtains the Lagrangians Piunikhin-Salamon-Schwarz  maps of
 Albers \cite{albers:lag}.  In particular, for $L_0 = L_1 = \Delta$ we
 obtain the usual Piunikhin-Salamon-Schwarz maps.

\begin{remark} \label{diagonal2} {\rm (Relative invariants for
    diagonal boundary conditions)} Similar relative invariants may be
  used to relate the Lagrangian Floer cohomology $HF(\Delta,\Delta)$
  defined using the {\em sphere} perturbation scheme with
  $HF(\Delta, \Delta)$ defined using the {\em strips} perturbation
  scheme.  Recall from Remark \ref{diagonal} that in the first case,
  the stabilizing divisor is a Donaldson hypersurface $D_Y \subset Y$,
  while in the second the stabilizing divisor
  $D_X \subset X = Y^- \times Y$ lies in the complement of the
  diagonal.  After perturbation we may suppose that $D_X$ is chosen
  transversally to $(D_Y \times Y)$ and $(Y \times D_Y)$ and
  intersects $D_Y \times D_Y$ transversally, hence trivially; see
  Remark \ref{marginal}.  Choose a base almost complex structure $J_D$
  on $X$ which, for disk components $C_i$ at distance greater than $2$
  is of split form $J_D |_{C_i} = J_{D,Y} \times J_{D,Y}$ while on
  components $C_i$ distance less than $1$ from the strip components,
  preserves $D_X, D_Y \times Y$ and $ Y \times D_Y$.  Note that on
  components of the latter type, $J_D$ will not be of split form.  Fix
  perturbations $P_\Gamma = (F_\Gamma,J_\Gamma)$ depending only on the
  markings mapping to $D_Y$, for any cylinder at infinite distance
  from the positive end, and only on the markings mapping to $D_X$ for
  any cylinder at infinite distance from the negative end; that is,
  pulled back under the relevant forgetful maps.  The same set-up as
  before defines a homotopy operator between the complexes defined
  using the two perturbation systems.
\end{remark}

A parametrized version of the moduli spaces leads to independence of
the relative invariants of all choices.  We first deal with
independence of the almost complex structure. Let $J_{E,0}, J_{E,1}$ be
almost complex structures on the total spaces $E$ as above, equal to
the given complex structures $J_e$ on the ends.  Contractibility of
the space of almost complex structures implies the existence of a
homotopy $J_{E,t}$ between $J_{E,0}$ and $J_{E,1}$.  The {\em
  parametrized moduli space} consists of pairs $(t,u: C \to X,\ul{z})$
where $(u: C \to X,\ul{z})$ is a stable marked treed $J_{E,t}$
holomorphic section on the surface part of $C$.  First suppose that
$J_{E,0},J_{E,1}$ both leave a stabilizing divisor $D$ invariant, that
is, $D$ is almost complex with respect to both $J_{E,0}$ and
$J_{E,1}$.  Perturbations $P_\Gamma = (J_\Gamma,F_\Gamma)$ are then
defined as before.  We assume that perturbations $J_{\Gamma,E,t}$ are
chosen so that at $t = 0$ resp. $t = 1$, the perturbation data only
depends on the intersections with $D_{E,0}$ resp. $D_{E,1}$;, that is,
are pulled back via forgetful maps forgetting markings mapping to
$D_{E,1}$ resp. $D_{E,0}$.  Denote by
$\widetilde{\M}(\ul{L},H,\ul{D},\ul{P})$ the parametrized moduli space of
such configurations.  The compactness and transversality properties of
the moduli space are similar to those given before, but now there are
additional boundary components: The boundary of the one-dimensional
component $\widetilde{\M}_1(\ul{L},H,\ul{D},\ul{P})$ of the parametrized
moduli space consists of configurations with either $t =0 $, $t = 1$,
Floer trajectories bubbling off the positive or negative ends of the
strip, or disk bubbles bubbling off the left or right boundary
components.  Define a homotopy operator
\begin{multline} h: \bigotimes_{e \in \mE_-}
  CF(L_{e,-},L_{e,+},;H_e,\ul{D}_-,\ul{P}_-) \to \bigotimes_{e \in
    \mE_+} CF(L_{e,-},L_{e,+}; H_e,\ul{D}_+, \ul{P}_+) , \\
  \otimes_{e \in \mE_-} \bra{x_e} \mapsto \sum_{x_e, e \in \mE_+}
  \sum_{[u] \in \widetilde{\M}_0(\ul{L},H,\ul{D},\ul{P}, \ul{x})}
  \eps([u]) q^{A([u])} \sigma([u]) \otimes_{e \in \mE_+} \bra{x_e}
  .\end{multline}

A modification of this construction allows the divisor to vary.  Given
two choices $D_{E,0}$ and $D_{E,1}$ that are sections of the same
bundle $\ti{E}$, Lemma \ref{auroux} \eqref{uniq} provides a family
$D_{E,t}$ such that $D_{E,t}$ is approximately $J_{E,t}$-holomorphic.
One then requires the markings for $(t,u)$ to map to $D_{E,t}$ and
obtains moduli spaces and a map as before.

A final modification deals with the case that the stabilizing divisors
are defined by sections of different bundles.  As in Remark
\ref{marginal}, there exist divisors $D_{E}', k = 0,1$ intersecting
$D_{E}$ $\eps$-transversally, such that $D_{E,0}'$ and $D_{E,1}'$
are sections of the same line bundle.  By Auroux's result
\cite[Theorem 2]{auroux:asym} $D_{E,0}', D_{E,1}'$ may be connected by
a family $D_{E,t}$.  Choosing perturbations depending only on the
intersections with $D_E$ at $t = k, k \in \{ 0, 1 \}$ map $h$ as in
the previous case.

\begin{theorem} Suppose that $L_i \subset X$ are admissible rational
  Lagrangian branes for $i = 0,\ldots, m$, and $\psi_0,\psi_1$
  relative maps defined using perturbations and divisors
  $\ul{P}_b, D_{E,b}$ over $\Sigma$ for $b \in \{ 0,1 \}$.  Then in each of
  the cases above the map $h$ is well-defined and a homotopy from
  $\psi_0$ to $\psi_1$:
  \begin{equation} \label{hop} 
\psi_1 - \psi_0 =  \partial_1 h +
    h \partial_0  .\end{equation}
As a result, the induced map in cohomology
$$[\psi]: \otimes_{e \in \mE_-} HF(L_{e,-},L_{e,+};\ul{D}_e,\ul{P}_e)
\to \otimes_{e \in \mE_+} HF(L_{e,-} ,L_{e,+}; \ul{D}_e, \ul{P}_e)$$
is independent of the all choices.  In case that
$\Sigma = \R \times [0,1]$ is the strip, the divisors
$\ul{D}_{e_-} = \ul{D}_{e_+}$ for the incoming and outgoing ends are
  equal and $\ul{P}_{e_-} = \ul{P}_{e_+}$, the induced map in
  cohomology is the identity map.
\end{theorem} 

\begin{proof} 
  The proof that $h$ is well-defined is a Gromov compactness argument
  similar to that in Theorem \ref{chainmap2}.  Only the contributions
  from boundary components corresponding to bubbling off Floer
  trajectories or the boundaries at $t = 0$ or $t = 1$ contribute,
  leading to the relation \eqref{hop}.  In the case
  $\Sigma = \R \times [0,1]$, $\ul{D}_- = \ul{D}_+$ and
  $\ul{P}_- = \ul{P}_+$, one may take for the divisor and perturbation
  data used to define the relative invariant the pull-back of the
  divisor and perturbation data used to define the Floer operator.  It
  follows that the only elements in the dimension-zero moduli space
  $\M(\ul{L},H,\ul{D},\ul{P})$ are constant configurations linking two
  equal intersection points $x_+ = x_-$.  Thus the chain level map
  $\psi: CF(L_0,L_1;\ul{D}_-,\ul{P}_-) \to CF(L_0,L_1; \ul{D}_+,
  \ul{P}_+) $ is the identity.

  The map for the first two cases shows that the invariants are
  independent of the choice of almost complex structure, as long as
  the stabilizing divisors are taken to be defined by sections of the
  same line bundle.  The last case shows that for a fixed almost
  complex structure, the maps are independent of the choice of line
  bundle used to define the stabilizing divisors.  Combining these
  observations gives independence of all choices.
\end{proof}

\subsection{Gluing laws and homotopies} 

The relative invariants satisfy a gluing law as in topological quantum
field theory.  Suppose that $\Sigma_{01},\Sigma_{12}$ are surfaces
with strip-like ends such that $\Sigma_{01}$ has the same number of
outgoing ends as $\Sigma_{12}$ has incoming ends.  Let $\Sigma_{02}$
denote a surface obtained by gluing together strip-like ends, as in
Figure \ref{gluefig}, by removing the intervals
$\pm (2T,\infty) \times [0,1]$ on each end for some real number
$T > 0 $ and gluing together the intervals $\pm (T,2T) \times [0,1]$.

\begin{figure}[h!]
\begin{picture}(0,0)%
\includegraphics{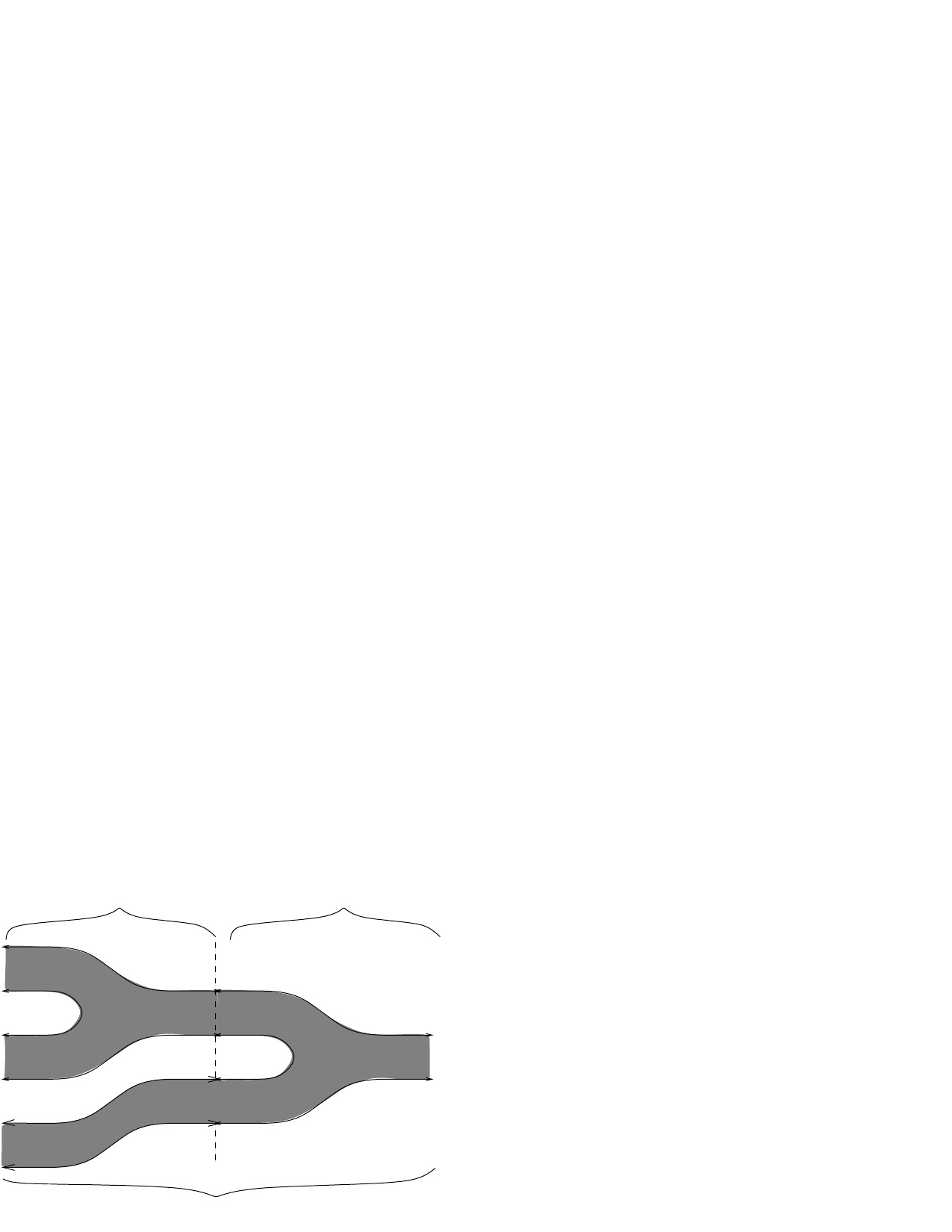}%
\end{picture}%
\setlength{\unitlength}{4144sp}%
\begingroup\makeatletter\ifx\SetFigFont\undefined%
\gdef\SetFigFont#1#2#3#4#5{%
  \reset@font\fontsize{#1}{#2pt}%
  \fontfamily{#3}\fontseries{#4}\fontshape{#5}%
  \selectfont}%
\fi\endgroup%
\begin{picture}(4503,3584)(1789,-1846)
\put(2710,1537){\makebox(0,0)[lb]{\smash{{\SetFigFont{12}{14.4}{\rmdefault}{\mddefault}{\updefault}{$\Sigma_{01}$}%
}}}}
\put(4920,1567){\makebox(0,0)[lb]{\smash{{\SetFigFont{12}{14.4}{\rmdefault}{\mddefault}{\updefault}{$\Sigma_{12}$}%
}}}}
\put(3630,-1773){\makebox(0,0)[lb]{\smash{{\SetFigFont{12}{14.4}{\rmdefault}{\mddefault}{\updefault}{$\Sigma_{02}$}%
}}}}
\end{picture}%
\caption{Gluing surfaces with strip-like ends}
\label{gluefig}
\end{figure}

\begin{theorem}
  Suppose that $H_e$ are Hamiltonian perturbations such that
  $\phi_{H_e}(L_{e,-}) \cap L_{e,+}$ is clean and
  $\phi_{H_e}(L_{e,-}) \cup L_{e,+}$ is rational for each end of
  $\Sigma_-$ and $\Sigma_+$.  Let $D_e$ be stabilizing divisors for
  $\phi_{H_e}(L_{e_-}) \cup L_{e_+}$ and $\ul{P}_e$ regular
  perturbations.  Then the relative invariants
$$ \psi_{ij}: \bigotimes_{e \in \cE_i}
CF(L_{e,-},L_{e,+},;H_e,\ul{D}_e,\ul{P}_e) \to \bigotimes_{e \in
  \cE_j} CF(L_{e,-},L_{e,+},;H_e,\ul{D}_e,\ul{P}_e) $$
where $\cE_i$ is the set of incoming ends of $\Sigma_{01}$ for
$i = 0$, the set of outgoing ends for $\Sigma_{01}$ for $i = 1$ and
the set of outgoing ends for $\Sigma_{12}$ for $i = 2$, satisfy the
gluing law $ [\psi_{12}] \circ [\psi_{01}] = [\psi_{02}] .$
\end{theorem}

The argument depends on the construction of another family of moduli
spaces which interpolate between the fiber products
$$ \M(\ul{L},H_{01},\ul{D}_{01},\ul{P}_{01} ) 
\times_{\prod_{e \in \cE_1} \cI(\phi_{H_e}(L_{e,-}),L_{e,+})}
\M(\ul{L},H_{12},\ul{D}_{12},\ul{P}_{12}) \quad \text{and} \quad
\M(\ul{L},H_{02},\ul{D}_{02},\ul{P}_{02} ) .$$
Namely consider a family of surfaces with strip-like ends $\Sigma_t$
for $t \in [0,1]$ obtained from $\Sigma_{01}$ and $\Sigma_{12}$ by
repeating the same gluing as before but gluing in necks of lengths
$1/t$ for each glued end.  If $\mE_0,\mE_1,\mE_2$ are the sets of ends
then $\Sigma_t$ has incoming ends $\mE_0$ and outgoing ends $\mE_2$.
Associate to the family $\Sigma_{012} = \cup_t \Sigma_t$ is a family
of fiber bundles
$$E_{012} =
\cup_{t \in [0,1]} (E_t := \Sigma_t \times X).$$
Let $J_{012} = (J_{012,t})$ denote a collection of almost complex
structures on $E_t$ connecting the almost complex structures obtained
from $J_{01}, J_{12}$ on the one hand and $J_{02}$ on the other.  Let
$\eta_{ij}$ denote the two-forms on $\Sigma_{ij}$ used to construct
the stabilizing divisors $D_{ij}$.  We may assume (by independence of
the choice of divisor) that $D_{02}$ is the divisor obtained by gluing
together $D_{01}$ and $D_{12}$.  A treed holomorphic section of
$E_{012}$ consists of a treed curve $C$, a map $u: C \to X$ that is a
$J_{012}$-holomorphic section on the surface part, is a gradient
trajectory on each edge.  The conditions for interior markings are
similar to those before: each interior markings is required to map to
$D_{012}$, and each connected component of the inverse image of
$D_{012}$ is required to contain at least one interior marking.  Let
$\ol{\M}(\ul{L}; H_{012}, \ul{D}_{012}, \ul{P}_{012})$ denote the
space of perturbed adapted stable treed sections of $E_{012}$ with
domain-dependent perturbations $\ul{P}_{012}$.  For generic choices of
perturbations, the zero and one-dimensional components of
$\ol{\M}(\ul{L}; H_{012}, \ul{D}_{012}, \ul{P}_{012})$ are compact
with the expected boundary.  As before, counting elements in the
zero-dimensional moduli space defines a homotopy operator
\begin{multline}
  h: \otimes_{e \in \mE_0} CF(L_{e,-},L_{e,+};H_e,\ul{D}_e,\ul{P}_e)
  \to \otimes_{e \in \mE_2} CF(L_{e,-},L_{e,+};H_e,\ul{D}_e,\ul{P}_e)  \\
  \bra{x_0} \mapsto \sum_{[u] \in
    \widetilde{\M}_0(\ul{L},H_{012},\ul{D}_{012},\ul{P}_{012},
    x_0,x_2)} \eps([u]) q^{A([u])} \sigma([u])
  \bra{x_2} \end{multline}
satisfying 
$ \psi_{12} \circ \psi_{01} - \psi_{02} = \partial_2 h +
 h \partial_0  .$
The Theorem follows.

\begin{remark}   {\rm (Products in Lagrangian Floer cohomology)}  
  If $L_0,L_1,L_2$ are admissible Lagrangian branes then the relative
  invariant for the half-pair-of-pants (triangle) gives rise to a
  relative invariant
  $HF(L_0,L_1) \otimes HF(L_1,L_2) \to HF(L_0,L_2)$.  A well-known
  argument for a surface with four strip-like ends, split in two
  different ways, implies the associativity relation for these
  products.  The category one obtains it the cohomology of the Fukaya
  category with objects restricted to admissible branes. Presumably
  one could use a similar argument to compare the perturbations
  constructed here with those of Fukaya-Oh-Ohta-Ono \cite{fooo}; the
  details of such a construction would be at least as lengthy as their
  construction.
\end{remark}

\begin{remark} {\rm (Periodic Floer cohomology)}  
  Continuing Remark \ref{diagonal} on the case that both Lagrangians
  are the diagonal correspondence, combining the gluing laws with
  invariance under perturbation implies that
$HF(\Delta, (1 \times
  \phi_H) \Delta) \cong HF(\Delta,\Delta)  $
  is isomorphic to the singular cohomology $H(Y,\Lambda) $ for any
  choice of Hamiltonian such that the intersection is
  $\Delta \cap (1 \times \phi_H) \Delta$ is transverse.  This
  completes the proof of Theorem \ref{introthm}.
\end{remark} 

\begin{corollary} For any compact symplectic manifold $(X,\omega)$ and
  time-dependent Hamiltonian $H \in C^\infty(X \times [0,1])$ whose
  time-one flow $\phi_1: X \to X$ has non-degenerate fixed point set
  $\Fix(\phi_1)$, the following inequality holds:
$$  \# \Fix(\phi_1) \ge \on{rank}(H(X,\Q)) $$
\end{corollary} 

\begin{proof} The proof is by a perturbation argument to reduce to the
  rational case.  Since $H^2(X,\Q)$ is dense in $H^2(X,\R)$ and the
  set of symplectic forms is open, there exists a small perturbation
  $\omega'$ of $\omega$ with rational cohomology class
  $[\omega'] \in H^2(X,\Q)$.  The time-one-flow $\phi_1': X \to X$ of
  $H$ with respect to $\omega'$ is a perturbation of $\phi_1$, and so
  still has non-degenerate fixed point set for $\omega'$ sufficiently
  close to $\omega$ and admits a canonical bijection
  $\Fix(\phi_1) \cong \Fix(\phi_1')$.  Indeed, since the set of
  two-forms $\omega'$ for which the fixed point set is non-degenerate
  is open, for $\omega'$ sufficiently close to $\omega$ there exists a
  path $\omega_t$ from $\omega'$ to $\omega$ so that the fixed point
  set $\Fix(\phi_1,\omega_t)$ is non-degenerate for each
  $t \in [0,1]$.  The union $\cup_{t \in [0,1]} \Fix(\phi_1,\omega_t)$
  is a one-manifold projecting submersively onto $[0,1]$, and so has
  the structure of a fiber bundle with discrete fibers.  Choosing a
  connection defines the bijection.  Thus as claimed we have 
$$ \# \Fix(\phi_1)  = \on{rank}(CF( (1 \times \phi_1)\Delta,\Delta)) \ge
\on{rank}(HF(\Delta,\Delta)) = \on{rank}H(X,\Q) .$$
\end{proof}

\def\polhk#1{\setbox0=\hbox{#1}{\ooalign{\hidewidth
      \lower1.5ex\hbox{`}\hidewidth\crcr\unhbox0}}} \def\cprime{$'$}

\end{document}